\numberwithin{equation}{section}
\newtheorem{th1}{Theorem}[section]
\newtheorem{theorem}[th1]{Theorem}
\newtheorem{lemma}[th1]{Lemma}
\newtheorem{proposition}[th1]{Proposition}
\newtheorem{remark}[th1]{Remark}
\newtheorem{corollary}[th1]{ Corollary}
\title[Indirect stabilization of semilinear coupled wave systems]{Indirect stabilization of semilinear coupled wave systems}
\title[Indirect stabilization of semilinear coupled wave systems]
{Indirect stabilization of semilinear coupled wave systems} 
\author{Radhia \textsc{Ayechi}}
\address{University of Sousse, ESSTHS, LAMMDA, Tunisia}
\email{radhiaayechi@essths.u-sousse.tn}
\author{Moez \textsc{Khenissi}}
\address{University of Sousse, ESSTHS, LAMMDA, Tunisia}
\email{moez.khenissi@essths.u-sousse.tn}
\author{Camille \textsc{Laurent}}
\address{CNRS UMR 7598 \& Sorbonne Universit\'e \\
Laboratoire Jacques-Louis Lions, F-75005, Paris, France}
\email{camille.laurent@sorbonne-universite.fr}
\subjclass{Primary: 58F15, 58F17; Secondary: 53C35.}
\keywords{Semilinear coupled system, Indirect stabilization, Energy decay, Strichartz estimate.}
\begin{document}
	\maketitle
	% Enter the first author's name and email address; email addresses are required for each author.
	% Use footnote notations to indicate address and affiliations with commas between numbers if more than one address applies; see below for examples.

	% The name of the handling editor will be entered by AIMS production staff.
	% "Communicated by Handling Editor" is not needed for special issue.

	%%%%%%%%%%%%%%%%%%%%%%%%%%%%%%%%%%%%%%%%%%%%%%%%%%%%%%%
	%             5. ABSTRACT
	%%%%%%%%%%%%%%%%%%%%%%%%%%%%%%%%%%%%%%%%%%%%%%%%%%%%%%%
	
	\begin{abstract} In this paper, we study the indirect stabilization problem for a system of two coupled semilinear wave equations with internal damping in a bounded domain in $\mathbb{R} ^3$. The nonlinearity is assumed to be subcritical, defocusing and analytic. Under geometric control condition on both coupling and damping regions, we establish the exponential energy decay rate.
	\end{abstract}

	\section{Introduction}
	
	This paper is devoted to the study of the following semilinear coupled wave system in a bounded domain of $ \mathbb{R}^3$ with a smooth boundary  $ \Gamma = \partial \Omega $:
	\begin{equation}\label{system}
	\begin{cases}
	\partial _{tt} u - \Delta u  + a(x) \partial _t u  + b(x) \partial _t v + f_1(u)= 0 & in \; \Omega \times \mathbb{R}_{+} ^{*} , \\
	\partial _{tt}	v  -\Delta v  - b(x) \partial _t u +f_2(v)= 0 & in \; \Omega \times \mathbb{R}_{+} ^{*} ,  \\
	u=v=0  & on \; \Gamma \times \mathbb{R}_{+} ^{*} , \\
	u(x,0)=u_{0} (x)\; , \;  v(x,0)=v_{0}(x)  & in \; \Omega ,  \\
	\partial _t	u (x,0) = u_{1}(x) \; , \; \partial _t v(x,0) = v_{1}(x) & in \; \Omega ,
	\end{cases}
	\end{equation}
	where the damping term $a \in L^\infty (\Omega)$ is a non-negative function, the coupling term \\ $b \in L^\infty (\Omega)$ is non-negative and the initial data $(u_{0},v_{0},u_{1},v_{1})$ is in the energy space\\ $\mathcal{H}:= \left( H_0^1(\Omega) \right)^2 \times\left( L^2(\Omega) \right)^2$. We denote by $ \Delta $ the Laplace operator on $ \Omega $.\\
	The non-linearity $f_i \in \mathcal{C}^1(\mathbb{R},\mathbb{R})$, for $i=1,2$, is assumed to be defocusing, energy subcritical and such that $0$ is an equilibrium point. More precisely, we assume that there exists $C>0$ such that 
	\begin{eqnarray}\label{nonlinearity}
	f_{i}(0) =0, \; sf_{i}(s)\geq 0 , \; \vert f_{i}(s) \vert \leq C (1+\vert s \vert )^p  \; \text{and } \vert f_{i}^\prime (s) \vert  \leq C (1+\vert s \vert )^{p-1} ,
	\end{eqnarray}
	with $1\leq p < 5$.

	We will check that problem \eqref{system} is well posed. Then the associated energy $E_{u,v} $ of a solution $(u,v)$ at time $t$ is defined by:
	\begin{eqnarray}
	E_{u,v}(t):= E(u,v, \partial _t u ,\partial _t v) & = & \frac{1}{2} \int_{\Omega} \left(   \vert \triangledown u(x,t) \vert ^{2} + \vert \partial _t u (x,t) \vert ^{2}  + \vert \triangledown v(x,t) \vert ^{2} + \vert \partial _t v (x,t) \vert ^{2} \right)    dx \nonumber \\
	&&	+ \int_{\Omega}  \mathcal{G}\left( u(x,t),v(x,t)\right) dx , 
	\end{eqnarray}
	where $ \mathcal{G}(u,v)= \displaystyle \int_{0}^{u} f_1(s) ds + \displaystyle \int_{0}^{v} f_2(s) ds$. 
	A straightforward computation shows that this energy is non-increasing: 
	\begin{eqnarray}
	E ^\prime _{u,v}  (t) = - \int _{\Omega} a(x) \vert \partial _t u (x,t) \vert ^{2} dx  \leq 0,
	\end{eqnarray}
	and  system \eqref{system} is therefore dissipative. Due to assumption \eqref{nonlinearity} and the Sobolev embedding $H ^1 (\Omega) \hookrightarrow L^6 (\Omega) $, for all $E_0 \geq 0 $ there exists $C\geq 0$ such that %\rouge{$\forall \; t>0$}
	\begin{eqnarray}\label{equiv}
	(u,v , \tilde{u} , \tilde{v}) \in  \mathcal{H},  \text{ with } E(u,v,\tilde{u},\tilde{v}) \leq E_0 \; \;\; \Rightarrow \frac{1}{C} \Vert (u,v,\tilde{u} ,\tilde{v}) \Vert _{\mathcal{H}} ^{2} \leq E(u,v,\tilde{u},\tilde{v}) \leq C \Vert (u,v,\tilde{u},\tilde{v} )\Vert _{\mathcal{H}} ^{2}.\quad 
	\end{eqnarray}
	The lower bound is a consequence of the positivity of $ \mathcal{G}(u,v)$ thanks to \eqref{nonlinearity}.
	%	The case of a single semilinear wave equation was
	%	% extensively studied in literature. \textbf{Biblio}
	% In this paper, we are interested in the exponential decay of the energy system \eqref{system}. 	 
	The aim of this paper is to give sufficient conditions on the non-linearity and on both coupling and damping regions, ensuring the uniform exponential decay of the energy. More precisely, for some positive constants $a_0$ and $b_0$, we assume that  $\omega_{a}$ and $\omega_{b}$ are two open subsets of $\Omega$, so that
	\begin{itemize}
		\item  $Supp(b)\subset \omega _a \subset \left\lbrace  a(x) \geq a_0 > 0 \right\rbrace  $ 
		\item $ \omega _b \subset  \left\lbrace   b(x)  \geq b_0 > 0 \right\rbrace  $
		\item $\omega_{b}$  satisfies the geometric control condition \footnote{	 We say that $ \omega_b $ satisfies the geometric control condition if every generalized geodesic (i.e. ray of geometric optics) traveling at speed one in $ \Omega $ meets $ \omega_b $ in a uniform time.}(see \cite{1}).
	\end{itemize}
	Note that the assumptions imply $\omega _b \subset\omega_{a}$, where both particularly satisfy the geometric control condition. The classical results of Bardos-Lebeau-Rauch (see \cite{1,24}) imply that for the scalar equation with damping $a\partial_{t}$, the energy decays exponentially. It should be  recalled that $b$ is assumed nonnegative. Yet, replacing $v$ by $-v$ changes $b$ to $-b$ in equation \eqref{system}. In particular, our result remains true in case $b \leq 0$, assuming that $ \omega _b \subset  \left\lbrace -  b(x)  \geq b_0 > 0 \right\rbrace  $. We will stick to the case $b\geq 0$ in what follows to avoid confusion. Our main result is the following:
	\bigskip
	\begin{theorem}\label{th1}
		We assume that the open sets $\omega_{a}$ and $ \omega_{b} $ satisfy the previous assumptions. If $f_{1}$ and $f_{2}$ are real analytic and satisfy \eqref{nonlinearity}, then for any $E_0 \geq 0$, there exist $ C >0  $ and $ \beta >0  $ such that, for all solutions $ (u,v) $ of system $ \eqref{system}$ with $ E_{u,v}(0)\leq E_0$,
		\begin{eqnarray}\label{energ}
		\forall t\geq 0, \quad	E_{u,v}(t) \leq C e^{-\beta t} E_0 . 
		\end{eqnarray}
		%for all $ t \geq 0 $, where $C $ and $ \beta $ are two positive constants.
	\end{theorem}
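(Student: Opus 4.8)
The plan is to combine two main ingredients: a *high-frequency* observability/decay estimate obtained by a compactness–uniqueness (or direct Lyapunov) argument for the linear coupled system, together with a *low-frequency* unique continuation step handled by the analyticity of the nonlinearity. The overall strategy follows the now-classical scheme for stabilization of semilinear defocusing equations (as in the work of Dehman–Lebeau–Zuazua and Joly–Laurent), adapted to the indirect/coupled setting. Let me sketch the main steps.

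\medskip

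\textbf{Step 1 (Well-posedness and boundedness of the energy space).} First I would establish global well-posedness of \eqref{system} in $\mathcal{H}$, using the subcriticality $p<5$ together with Strichartz estimates for the wave equation on the bounded domain $\Omega\subset\mathbb{R}^3$ (Burq–Lebeau–Planchon type estimates with Dirichlet boundary conditions). The energy identity $E'_{u,v}(t)=-\int_\Omega a|\partial_t u|^2\,dx\le 0$ gives a uniform a priori bound $E_{u,v}(t)\le E_0$, so the solution stays in a fixed bounded set of $\mathcal{H}$ by \eqref{equiv}; this makes all the Strichartz constants uniform on trajectories and justifies the nonlinear perturbation estimates used below.

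\medskip

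\textbf{Step 2 (Reduction to an observability inequality).} The exponential decay \eqref{energ} is equivalent, by the semigroup/dissipation structure, to an observability estimate of the form
\begin{equation*}
E_{u,v}(0)\le C\int_0^T\!\!\int_\Omega a(x)\,|\partial_t u|^2\,dx\,dt,
\end{equation*}
valid uniformly for all data with $E_{u,v}(0)\le E_0$ and $T$ large. The difficulty specific to the \emph{indirect} problem is that the damping $a\partial_t u$ acts only on the first equation, while the second equation is undamped; energy must be transferred to $v$ through the coupling term $b(x)\partial_t u$, $b(x)\partial_t v$. Because $\mathrm{Supp}(b)\subset\omega_a$ and $\omega_b$ satisfies the geometric control condition, one expects the coupling to let the damping ``see'' the $v$-component, and the heart of the argument is to prove this transfer quantitatively.

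\medskip

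\textbf{Step 3 (Linear observability via two-level GCC).} For the linearized coupled system I would prove the observability inequality by the contradiction/compactness method: assuming it fails, extract a sequence of solutions whose normalized energy concentrates away from the observation, pass to semiclassical/microlocal defect measures, and show that the measure must propagate along the bicharacteristic flow (propagation of singularities) while being killed on $\omega_a$ by the damping and on $\omega_b$ (via the coupling) on the second component. The GCC on $\omega_b$ forces every ray to meet the region where energy is transferred between the two components, and the GCC on $\omega_a$ then absorbs it; this yields that the defect measure vanishes, a contradiction. This two-region propagation argument for the coupled operator is, I expect, \textbf{the main technical obstacle}, since one must handle the partial nature of the damping and the fact that the coupling is of lower order.

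\medskip

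\textbf{Step 4 (Nonlinear absorption and unique continuation).} Finally I would upgrade the linear estimate to the nonlinear system. Writing the nonlinearity as a perturbation, the Strichartz bounds from Step 1 control the nonlinear terms, and the compactness argument produces a weak limit $(u,v)$ solving the system with $a\partial_t u\equiv 0$ on the observation set and zero observed energy. Here the \emph{analyticity} of $f_1,f_2$ is used in a unique continuation theorem (in the spirit of Joly–Laurent) to conclude that such a limiting solution must be identically zero, removing the low-frequency/compact obstruction and closing the contradiction. Combining the resulting uniform observability with the energy dissipation identity yields the exponential decay \eqref{energ} on each energy sublevel $\{E\le E_0\}$, which is exactly the claimed statement.
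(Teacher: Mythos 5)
Your overall scheme --- global well-posedness by Strichartz estimates, reduction to a uniform observability inequality, and a compactness--uniqueness argument closed by analyticity-based unique continuation --- is indeed the scheme of the paper; your Step 3 is handled there by invoking the linear stabilization theorem of Ayechi--Khenissi (Theorem \ref{th2}) as a black box rather than re-proving it with defect measures, and your decoupling-plus-Joly--Laurent step is exactly Proposition \ref{L3}. But there is a genuine gap in your Step 4: the contradiction argument requires \emph{two} cases, and you treat only one. Write $\alpha_n = E_{u_n,v_n}(0)^{1/2}$ for the contradicting sequence, so that $\int_0^{T_n}\int_\Omega a\,|\partial_t u_n|^2\,dx\,dt \le \frac{1}{n}\alpha_n^2$. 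Your argument (the limit solves \eqref{cont}, hence vanishes by unique continuation, contradicting that its energy is positive) works only if $\alpha_n\to\alpha>0$. Nothing in the contradiction hypothesis prevents $\alpha=0$: the observed quantity is normalized \emph{relative to} the initial energy, which may itself tend to zero along the sequence. In that case the limit is the zero solution, unique continuation yields no contradiction, and your proof collapses. The paper spends the second half of the proof of Theorem \ref{th3.6} on precisely this case: one rescales $\mathcal{W}_n=\mathcal{U}_n/\alpha_n$, splits $f_i(s)=f_i'(0)s+R_i(s)$ with $|R_i(s)|\le C(|s|^2+|s|^p)$, proves via Strichartz estimates and a bootstrap that the rescaled nonlinearities $\alpha_n^{-1}R_i(\alpha_n w_{n,i})$ are $O(\alpha_n)$ in $L^1L^2$ on unit time intervals, and then uses the exponential stability of the \emph{linearized} semigroup $\tilde{\mathcal{S}}(t)$ (generated by $\tilde{\mathcal{A}}$, with potentials $f_i'(0)\ge 0$) to conclude $\Vert \mathcal{W}_n(T_n)\Vert_{\mathcal{H}}\to 0$, contradicting the lower bound $\Vert \mathcal{W}_n(t)\Vert_{\mathcal{H}}^2\ge 1/(2C)$ that the normalization provides. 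Without this linearization step the argument is incomplete.

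A secondary imprecision: in the case $\alpha>0$ you need \emph{strong} convergence in $\mathcal{C}^0([0,T],\mathcal{H})$, not merely a weak limit, both to guarantee that the limit has energy $\alpha^2>0$ and to pass to the limit in the nonlinearity and in the constraint $a\,\partial_t u\equiv 0$. In the paper this asymptotic compactness (Proposition \ref{L2}) is not soft: it follows from the subcritical regularity gain of the nonlinear Duhamel term (Lemma \ref{L5}, which places it in $\mathcal{H}^\epsilon$) combined with the exponential decay of the linear coupled semigroup in $\mathcal{H}^\epsilon$ (Corollary \ref{cor1}), which annihilates the free evolution as $t_n\to\infty$ and makes Rellich's theorem applicable to the remainder. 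So the linear decay result of your Step 3 is used twice more than your sketch suggests: once to produce the strong compactness in the $\alpha>0$ case, and once, in linearized form, to close the missing $\alpha=0$ case.
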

	This result means that the damping "$a  \partial _t u $", applied in only one equation for the system, stabilizes any solution of \eqref{system} to zero, which is an important property from the dynamical and control point of view.\\
	The proof of this result is mainly of the type: "geometric control condition" + "unique continuation" implies "exponential decay".

	Concerning  the problem of stabilization of a linear damped wave equation, uniform exponential decay has been obtained in Bardos-Lebeau-Rauch \cite{1} and Lebeau \cite{24} under the usual geometric control condition (\textbf{GCC}). Roughly speaking, the assumption is that every ray of geometric optics enters the damping region in a uniform time. The geometric control condition is known to be not only sufficient but also necessary for the exponential decay of the linear damped equation. Note that a large amount of other results have been obtained in this context for different questions: obtaining different  decay rates under weaker geometric assumptions (see for instance \cite{anantharaman2014sharp,22} and the  references therein) and studying the phenomenon induced by the infinite domain \cite{aloui2015energy,bouclet2014local,joly2018energy,khenissi2003equation,cavalcanti2008uniform,fatiha}. 
	
	The research about linear partially damped wave systems seems more recent, but the subject has been very active. There has been several results using the functional analytic method as the one of Alabau \cite{A:15} and Alabau-L\'eautaud \cite{fatiha} (see also the survey  \cite{A:12}). We can also quote Klein \cite{K:18}, who obtained a general result with a matricial condition using microlocal analysis methods, and Cui-Wang \cite{CW:21} for a more specific problem. In this article, we will use the linear result of Ayechi-Khenissi \cite{radhia2022local} where the authors established the uniform stability when the coupling region is contained in the damping region and satisfied the geometric control condition. It is worth noting  also that the question of the damping for a system of waves is closely related to the problems of controllability for systems for which there has been recent progress. We refer for instance to Alabau and L\'eautaud  \cite{A:03,AB:13} using functional analysis or Dehman-L\'eautaud- Le Rousseau \cite{DLL:14} with microlocal methods. Moreover, in the context of a compact manifold, the general result of Cui-Laurent-Wang \cite{cui2020observability} proved the equivalence of observability with a system of ODE along the rays of geometric optics. It can be applied to prove the expected observability estimate for the model system of this article, but in the case without boundary. We refer to Remark \ref{rklienODE} for more precision with the link with a natural ODE system along the bicharacteristic flow.

	The case of a single semi-linear wave equation was studied in \cite{haraux1985stabilization, zuazua1990exponential, zuazua1991exponential, dehman2003stabilization} for $p<3$. The first result for $ p \in \left[ 3, 5 \right) $ was the one of Dehman, Lebeau and Zuazua \cite{dehman2003stabilization}. This work was mainly concerned with the stabilization problem on the Euclidean space $\mathbb{R}^3$ with flat metric and stabilization active outside of a ball. Nevertheless, it settled a general scheme of proof for the stabilization of a nonlinear subcritical problem of the form: "propagation of compactness " + "unique continuation" implies "observability". However, the problem of the unique continuation problem under a general geometric control condition has not been solved in general. This problem was solved in Joly-Laurent \cite{2} under the assumption of the analyticity of the nonlinearity. We will follow this scheme of proof concerning the nonlinear part of the proof.
	
	Other stabilization results for the nonlinear wave equation can be found in Aloui-Ibrahim-Nakanishi \cite{aloui2010exponential} with a stronger geometric condition, but a large class of nonlinearity. We also refer to Cavalcanti-Cavalcanti-Fukuoka-Pampu-Astudillo \cite{CCFPA:18} and the references therein for some related equations with nonlinear damping. Some works have been done in the difficult critical case $p=5$; we can refer to \cite{dehman2002stabilization, laurent2011stabilization}. 
	
	\medskip
	
	We begin this paper by proving the global existence and uniqueness results in section \ref{s:exis}. In section \ref{s:stab}, under geometric control condition on both damping and coupling terms, we first prove a compactness result and then a unique continuation result which ultimately will prove the exponential stabilization for the linear coupled system and the uniform stabilization result.

	In the following, the norm on $L^2(\Omega)$ is written $\Vert . \Vert $ and we write $\left\langle  . \right\rangle $ for the scalar product on $L^2(\Omega)$.  Furthermore, $C$ will denote some constant (that could depend on the fixed parameters of the problem: $\Omega$, $a$, $b$, $f$) with a value that can change from one line to another.
	
	\section{Global existence and uniqueness}
	\label{s:exis}
	In this section, we prove the existence and uniqueness result of the semilinear coupled wave system \eqref{system} in the energy space. Then we reduce system \eqref{system} to a Cauchy problem: 
	\begin{eqnarray}\label{2}
	\begin{cases}
	\mathcal{U} ^\prime  (t) = \mathcal{A} \mathcal{U}(t) + \mathcal{F} ( \mathcal{U}), \\
	\mathcal{U}(0)=\mathcal{U}_{0} \in \mathcal{H},
	\end{cases}
	\end{eqnarray}
	where $\mathcal{U}=(	u ,v ,\partial _t u , \partial _t v)^T  $, $\mathcal{U}_0=(	u_0 ,v _0,u_{1} ,v_{1})^T  $,  $ \mathcal{F}(\mathcal{U}) =(0,	0,-f_1(u) ,-f_2(v))^T$ and the operator $ \mathcal{A} $ is given by 
	$$  \mathcal{A}=
	\begin{pmatrix}
	0 & 0 & I & 0 \\
	0 & 0 & 0 & I \\
	\Delta & 0 &  -a(x) & -b(x) \\
	0& \Delta & b(x)  & 0
	\end{pmatrix}
	$$ 
	with domain $ \mathcal{D}(\mathcal{A})= \left( H_{0} ^{1} ( \Omega ) \cap H^{2}( \Omega ) \right) ^{2} \times \left( H_{0} ^{1} ( \Omega ) \right) ^{2}  $. \\
	It is clear that $\mathcal{A}$ is a maximal dissipative operator on the Hilbert space $\mathcal{H}$ (\cite{radhia2022local}). Thus, by the Lummer-Philips theorem (see \cite{3}), it generates a $\mathcal{C}_0$ semigroup of contraction $(\mathcal{S}(t))_{t\geq 0}$ on the Hilbert space $\mathcal{H}$. In the following,  we note:
	\begin{eqnarray}
	\Vert u \Vert _{L^{q}_{T}L^{r}}:= \left[ \int _{0}^{T} \Vert u(t,.) \Vert _{L^{r}( \Omega )} ^{q}dt \right] ^{\frac{1}{q}} \; \; , \text{for } T>0, \; q,r \in \left[ 1, +\infty \right]  .\nonumber \\ \nonumber
	\end{eqnarray}
	In this section, we will prove the following result: %\\[0.7cm]
	\begin{theorem}{Cauchy problem}\label{theorem}\item 
		For any $ \mathcal{U}_0 \in \mathcal{H} $ there exists a unique  solution $(u,v) \in \left( \mathcal{C} (\mathbb{R}_+ , H_0 ^1(\Omega)) \cap\mathcal{C}^1 (\mathbb{R}_+ , L ^2(\Omega))\right)^2 $ %\rouge{$(u,v) \in \mathcal{C} (\mathbb{R}_+ ,\mathcal{H}) $} 
		of the semilinear coupled wave system \eqref{system}. Moreover, this solution satisfies the following Strichartz estimate: for every finite $T>0$ and $(q,r) $ where 
		\begin{eqnarray}\label{coupleadm}
		\frac{1}{q} + \frac{3}{r} = \frac{1}{2} \quad , q \in \left[ \frac{7}{2} , +\infty \right]  
		\end{eqnarray}
		there exists a constant $C=C  (\Vert \mathcal{U}_0 \Vert_{\mathcal{H}})>0$ such that :
		\begin{eqnarray}\label{strichartz}
		\Vert (u,v) \Vert _{\left( L^q(\left[ 0,T \right] ,L^r(\Omega))\right) ^2} \leq C  (\Vert \mathcal{U}_0 \Vert_{\mathcal{H}})  .
		\end{eqnarray}
		Moreover, the solution is unique in  $\left( \mathcal{C} (\mathbb{R}_+ , H_0 ^1(\Omega)) \cap\mathcal{C}^1 (\mathbb{R}_+ , L ^2(\Omega))\cap L^{q}_{T}L^{r}\right) ^2$ with suitable $q$ and $r$ as in \eqref{coupleadm}. 
	\end{theorem}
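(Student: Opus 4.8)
The plan is to solve the fixed-point problem attached to the Duhamel formula
\[
\mathcal{U}(t) = \mathcal{S}(t)\mathcal{U}_0 + \int_0^t \mathcal{S}(t-\tau)\,\mathcal{F}(\mathcal{U}(\tau))\,d\tau
\]
in the space $X_T := \left(\mathcal{C}([0,T],H_0^1(\Omega))\cap \mathcal{C}^1([0,T],L^2(\Omega))\cap L^q_T L^r\right)^2$ for an admissible pair $(q,r)$ as in \eqref{coupleadm}, and then to promote the resulting local solution to a global one by means of the energy bound. The first step, which I expect to be the main obstacle, is the linear Strichartz estimate for the semigroup $\mathcal{S}(t)$ in the stated range. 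On a bounded domain with boundary the free wave flow only satisfies Strichartz estimates with a loss, and the admissible range is genuinely restricted: the condition $q\geq 7/2$ in \eqref{coupleadm} is exactly the boundary-admissibility threshold one gets by combining the known Strichartz estimates for the Dirichlet wave equation on $\Omega$ (Blair--Smith--Sogge, Burq--Lebeau--Planchon) with the energy-level scaling $\frac1q+\frac3r=\frac12$. I would first record these estimates for the scalar Dirichlet wave equation and then transfer them to the coupled system: writing the linear part as two scalar wave equations whose right-hand sides are the lower-order terms $-a\partial_t u - b\partial_t v$ and $b\partial_t u$, these source terms lie in $L^1([0,T],L^2(\Omega))$ on any finite interval, since $a,b\in L^\infty(\Omega)$ and $\partial_t u,\partial_t v\in L^\infty_t L^2$ by the contraction property of $\mathcal{S}(t)$. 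The inhomogeneous Strichartz estimate then applies and yields the same range for $\mathcal{S}(t)$.

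The second step is the nonlinear local theory, whose core is a bound for $\Vert f_i(u)\Vert_{L^1_T L^2}$. From $\vert f_i(s)\vert \leq C(1+\vert s\vert)^p$ one has $\Vert f_i(u)\Vert_{L^2(\Omega)} \lesssim 1 + \Vert u\Vert_{L^{2p}(\Omega)}^p$; choosing the admissible pair $r=2p$ (legitimate since $2p\in(6,10)$ lies in the range $r\in[6,14]$ allowed by \eqref{coupleadm} when $3<p<5$) and applying H\"older in time gives
\[
\Vert f_i(u)\Vert_{L^1_T L^2} \lesssim T^{\frac{5-p}{2}}\left(1 + \Vert u\Vert_{L^q_T L^r}^{p}\right),
\]
with the positive exponent $\frac{5-p}{2}$ coming from subcriticality $p<5$ and with $q=\frac{2p}{p-3}\in(5,\infty)\subset[7/2,\infty]$. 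A parallel estimate based on the pointwise bound $\vert f_i(u)-f_i(\tilde u)\vert \lesssim (1+\vert u\vert^{p-1}+\vert \tilde u\vert^{p-1})\vert u-\tilde u\vert$, coming from $\vert f_i'(s)\vert\leq C(1+\vert s\vert)^{p-1}$, shows that the Duhamel map is a contraction on a ball of $X_T$ for $T$ small depending only on $\Vert \mathcal{U}_0\Vert_{\mathcal{H}}$. For $1\leq p\leq 3$ the argument is easier: then $2p\leq 6$, the embedding $H_0^1(\Omega)\hookrightarrow L^6(\Omega)$ already controls $f_i(u)$ in $L^\infty_T L^2$, and no Strichartz norm is needed for existence. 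The Banach fixed-point theorem gives a unique solution in $X_T$, and uniqueness in the class of \eqref{coupleadm} follows from the same difference estimate via Gronwall.

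Finally, for globalization I would observe that the local existence time depends on the data only through $\Vert\mathcal{U}_0\Vert_{\mathcal{H}}$, while the energy identity $E_{u,v}'(t)=-\int_\Omega a\,\vert\partial_t u\vert^2\leq 0$ together with the norm equivalence \eqref{equiv} keeps $\Vert\mathcal{U}(t)\Vert_{\mathcal{H}}$ bounded for all $t\geq0$. Iterating the local construction on successive intervals of a fixed length then yields the global solution in $\left(\mathcal{C}(\mathbb{R}_+,H_0^1(\Omega))\cap\mathcal{C}^1(\mathbb{R}_+,L^2(\Omega))\right)^2$. The Strichartz bound \eqref{strichartz} on $[0,T]$ is read off by applying the linear estimate of the first step to the global solution and inserting the nonlinear bound of the second step, the constant depending on $\Vert\mathcal{U}_0\Vert_{\mathcal{H}}$ through the non-increasing energy. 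The delicate point throughout is that, because of the boundary, one is confined to the restricted admissible range; the verification that the subcritical exponents $r=2p$ and $q=\frac{2p}{p-3}$ remain inside this range for every $p\in(3,5)$ is precisely what makes the whole scheme close.
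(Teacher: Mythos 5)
Your proposal is correct and follows essentially the same route as the paper: the linear Strichartz estimate for the coupled system is obtained by treating the coupling (and damping) terms as $L^1_T L^2$ sources in the scalar Dirichlet-wave estimates of Blair--Smith--Sogge and Burq--Lebeau--Planchon with the energy bound coming from the contraction semigroup, followed by a contraction-mapping argument in the Strichartz-augmented energy space with the same exponents $(q,r)=\bigl(\tfrac{2p}{p-3},2p\bigr)$ and the same H\"older-in-time gain $T^{(5-p)/2}$, and globalization via the non-increasing energy and the norm equivalence \eqref{equiv}. The only cosmetic differences are that you globalize by iterating the local theory on intervals of fixed length while the paper phrases this as a blow-up criterion, and you treat $p\leq 3$ directly by Sobolev embedding while the paper raises $p$ to lie in $(3,5)$ without loss of generality.
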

	The couple $(q,r)$ will be chosen according to the nonlinearity, see for instance \eqref{qr} for the explicit choice. In this theorem and in all the article, the solutions of \eqref{system} are meant to be as  the usual Duhamel formulation, see below. It implies to be a solution in the distributional sense. 
	
	We prove this theorem by the fixed point method with a result of the Strichartz estimate  for the linear coupled wave system. This result is given by the following theorem.
	\vspace{1cm}
	
	%	non-homogeneous damped wave equation given by the following theorem (see \cite[Corollary 2.1]{4}):
	%	\begin{theorem}{Strichartz estimate} \label{th2.2} \item 
	%	Let $T> 0$ and $(q,r)$ satisfying \eqref{coupleadm}.
	%	  There exists $C=C(T,q)>0$	such that for every $ G \in L^1\left( \left[ 0,T \right] ,L^2(\Omega ) \right) $ and every $(u_0, u_1 ) \in H_0 ^1 (\Omega )\times L^2(\Omega )  $, the solution $ u $ of the following damped wave equation:
	%		\begin{eqnarray}
	%			\begin{cases}
	%			\partial _{tt}	u(x,t) - \Delta u (x,t) + a(x) \partial _t u (x,t) = G(t) & in \; \Omega \times \mathbb{R}_{+} ^{*} , \\
	%				u(x,t)=0  & on \; \Gamma \times \mathbb{R}_{+} ^{*} , \\
	%				u(x,0)=u^{0} (x)\; , \;  \partial _t u (x,0)=u^1(x)  & in \; \Omega,
	%			\end{cases}
	%%		\end{eqnarray}
	%	satisfies the estimate
	%		\begin{eqnarray}\label{strichartzest}
	%			\Vert u \Vert _{ L^q(\left[ 0,T \right] ,L^r(\Omega))} \leq C \left( \Vert u_0 \Vert_{H^1(\Omega)} + \Vert u_1 \Vert_{L^2(\Omega)} + \Vert G \Vert_{ L^1(\left[ 0,T \right], L^{2}(\Omega))} \right) .
	%		\end{eqnarray}
	%%	\end{theorem}
	
	%	 From this result, first we deduce the Strichartz estimate

	\begin{theorem}{Strichartz estimate} \label{th2.3}\item 
		Let $T> 0$ and $(q,r)$ satisfy \eqref{coupleadm}.
		There exists $C=C(T,q)>0$	such that for every $ G_1 , \; G_2 \in L^1\left( \left[ 0,T \right] ,L^2(\Omega ) \right) $ and every $(u_0,v_0, u_1 ,v_1) \in \mathcal{H}  $, the solution $ (u,v) $ of the following linear coupled wave system:
		\begin{eqnarray}\label{termesource}
		\begin{cases}
		\partial _{tt}	u(x,t) - \Delta u (x,t) + a(x) \partial _t u (x,t)+ b(x) \partial _t v (x,t) = G_1(t) & in \; \Omega \times \mathbb{R}_{+} ^{*} , \\
		\partial _{tt}	v(x,t) - \Delta v (x,t) - b(x) \partial _t u (x,t) = G_2(t) & in \; \Omega \times \mathbb{R}_{+} ^{*} , \\
		u(x,t)=v(x,t)=0  & on \; \Gamma \times \mathbb{R}_{+} ^{*} , \\
		u(x,0)=u_{0} (x)\; , \;  \partial _t u (x,0)=u_1(x)  & in \; \Omega,\\
		v(x,0)=v_{0} (x)\; , \;  \partial _t v (x,0)=v_1(x)  & in \; \Omega,
		\end{cases}
		\end{eqnarray}
		satisfies the estimate
		\begin{eqnarray}\label{strichartzesti}
		\Vert (u,v) \Vert _{ (L^q(\left[ 0,T \right] ,L^r(\Omega)))^2} \leq C \left(\Vert \mathcal{U}_0 \Vert_{\mathcal{H}}  + \Vert (G_1,G_2) \Vert_{ (L^1(\left[ 0,T \right], L^{2}(\Omega)))^2} \right) .
		\end{eqnarray}
	\end{theorem}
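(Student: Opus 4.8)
The plan is to treat the damping term $a\partial_t u$ and the coupling terms $b\partial_t v$ and $-b\partial_t u$ as source terms, thereby reducing \eqref{termesource} to two \emph{decoupled} scalar Dirichlet wave equations to which the known Strichartz estimates on a bounded domain apply. Rewriting the system as
\begin{align*}
\partial_{tt} u - \Delta u &= G_1 - a(x)\partial_t u - b(x)\partial_t v, \\
\partial_{tt} v - \Delta v &= G_2 + b(x)\partial_t u,
\end{align*}
each unknown solves, in the Duhamel sense, a scalar wave equation whose forcing involves only the first-order quantities $\partial_t u$ and $\partial_t v$ with bounded coefficients.

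First I would invoke the scale-invariant Strichartz estimate for the scalar wave equation $\partial_{tt} w - \Delta w = F$ with homogeneous Dirichlet condition on $\Omega \subset \mathbb{R}^3$: for $(w_0,w_1) \in H_0^1(\Omega)\times L^2(\Omega)$ and $F \in L^1([0,T],L^2(\Omega))$,
\[
\|w\|_{L^q([0,T],L^r(\Omega))} \leq C(T,q)\left(\|w_0\|_{H_0^1}+\|w_1\|_{L^2}+\|F\|_{L^1([0,T],L^2)}\right),
\]
valid for the admissible pairs of \eqref{coupleadm}. This is the content of the boundary Strichartz estimates of Burq--Lebeau--Planchon and Blair--Smith--Sogge; the restriction $q\geq \tfrac{7}{2}$ is precisely their no-loss admissibility condition $\tfrac{3}{q}+\tfrac{2}{r}\leq 1$ in dimension three, which, combined with $\tfrac{1}{q}+\tfrac{3}{r}=\tfrac{1}{2}$, forces $q\geq \tfrac{7}{2}$.

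The second step is to control the forcing in $L^1([0,T],L^2(\Omega))$. Since $\mathcal{A}$ is maximal dissipative and generates the contraction semigroup $(\mathcal{S}(t))_{t\geq 0}$, the Duhamel representation of the solution of \eqref{termesource} gives the energy estimate
\[
\sup_{0\leq t\leq T}\|(u,v,\partial_t u,\partial_t v)(t)\|_{\mathcal{H}} \leq C\left(\|\mathcal{U}_0\|_{\mathcal{H}}+\|(G_1,G_2)\|_{(L^1([0,T],L^2))^2}\right).
\]
In particular $\partial_t u,\partial_t v\in L^\infty([0,T],L^2(\Omega))$ with norm bounded by the right-hand side. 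Because $a,b\in L^\infty(\Omega)$, the forcing terms obey
\[
\|a\partial_t u+b\partial_t v\|_{L^1([0,T],L^2)} \leq T\left(\|a\|_{\infty}+\|b\|_{\infty}\right)\sup_{0\leq t\leq T}\|(\partial_t u,\partial_t v)\|_{(L^2)^2},
\]
and likewise for $b\partial_t u$. Substituting these into the scalar estimate applied to each equation and summing yields \eqref{strichartzesti}, with $C=C(T,q)$ absorbing the factor $T$ coming from the coupling.

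The only genuinely hard input is the scalar boundary Strichartz estimate itself, which I do not reprove: the presence of $\Gamma=\partial\Omega$ is exactly what forces the threshold $q\geq \tfrac{7}{2}$ instead of the full Euclidean range. Everything else is perturbative, since the damping and coupling are first order in time with $L^\infty$ coefficients and are harmlessly swept into the $L^1_T L^2$ forcing on a finite interval via the dissipative energy bound. I expect the point deserving care is the compatibility of the two Duhamel formulas involved, namely that the solution produced by the contraction semigroup $\mathcal{S}(t)$ coincides with the distributional solution to which the scalar propagator estimate is applied; this follows from uniqueness of the linear Cauchy problem in the energy space. Finally, the endpoint $q=\infty$, $r=6$ requires no Strichartz machinery, as it follows directly from the energy bound and the Sobolev embedding $H_0^1(\Omega)\hookrightarrow L^6(\Omega)$.
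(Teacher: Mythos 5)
Your proof is correct and is essentially the paper's own argument: both are perturbative, sweeping first-order terms into an $L^{1}_{T}L^{2}$ forcing controlled by the contraction-semigroup energy bound (Lemma \ref{Energies identities}), and then applying the scalar boundary Strichartz estimates of \cite{5} and \cite{4} for the admissible range \eqref{coupleadm}. The only difference is cosmetic: you move the damping term $a\partial_t u$ to the right-hand side and invoke the free Dirichlet wave estimate directly, whereas the paper keeps $a\partial_t$ inside the operator and cites the damped-wave Strichartz estimate of \cite[Theorem 2.1]{2}, treating only the coupling terms $b\partial_t v$ and $-b\partial_t u$ as sources; the two routes are interchangeable, since the cited damped estimate is itself obtained by the same absorption argument you carry out by hand.
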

	The central argument to prove the previous theorem is the use of the Strichartz estimate for a non-homogeneous damped wave equation given in \cite[Corollary 1.2]{4}. The last result was proved by Burq, Lebeau and Planchon (see \cite{5}) for $q \in \left[ 5, + \infty \right] $ and extended to a larger range by Blair, Smith and Sogge in \cite{4}. We note that the Strichartz estimate was first established in the Euclidean space $\mathbb{R} ^3 $ by Strichartz \cite{6} and Ginibre and Velo (\cite{7}, \cite{8}) with $ q \in \left[ 2,+\infty \right] $.

	We note that the associated energy $E $ of a solution $(u,v)$ of system \eqref{termesource} at time $t$ is defined by:
	\begin{eqnarray}
	E(t) & = & \frac{1}{2} \int_{\Omega} \left(   \vert \triangledown u(x,t) \vert ^{2} + \vert \partial _t u (x,t) \vert ^{2}  + \vert \triangledown v(x,t) \vert ^{2} + \vert \partial _t v (x,t) \vert ^{2} \right)    dx . \nonumber
	\end{eqnarray}
	In order to prove the previous theorem, we need the following energy estimate.
	\begin{lemma}  \label{Energies identities}  There exists $C>0$ such that for  every $T> 0$, $ G_1 , \; G_2 \in L^1\left( \left[ 0,T \right] ,L^2(\Omega ) \right) $ and every $ \mathcal{U}_0 =(u_0,v_0, u_1 ,v_1) \in \mathcal{H}  $, the energy $E$ of the solution $ (u,v) $ of system \eqref{termesource} satisfies
		\begin{eqnarray}\label{RS}
		\sqrt{E(t)} \leq C \left( \Vert \mathcal{U}_0 \Vert_{\mathcal{H}}  + \Vert (G_1,G_2) \Vert_{ (L^1(\left[ 0,T \right], L^{2}(\Omega)))^2} \right)  \quad \text{for}\; t\in  \left[ 0, T \right] .
		\end{eqnarray}
	\end{lemma}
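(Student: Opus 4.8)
The plan is to establish \eqref{RS} by the standard energy method: differentiate $E(t)$ along the flow, exploit the skew-symmetric placement of the coupling term $b$, and integrate the resulting differential inequality. Since a solution with data in $\mathcal{H}$ has only finite energy, I would first prove the estimate for smooth data, that is, for $\mathcal{U}_0\in\mathcal{D}(\mathcal{A})$ and $G_1,G_2$ regular, where all the manipulations below are legitimate, and then recover the general case by a density argument, using the continuity of the semigroup $(\mathcal{S}(t))_{t\ge 0}$ and of the Duhamel map on $\mathcal{H}$.

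For such smooth solutions, differentiating $E$ and integrating by parts in space (the boundary terms vanish because $u=v=0$ on $\Gamma$) gives
\[
E'(t)=\int_\Omega \partial_t u\,(\partial_{tt}u-\Delta u)\,dx+\int_\Omega \partial_t v\,(\partial_{tt}v-\Delta v)\,dx.
\]
Substituting the two equations of \eqref{termesource}, the coupling contributions are exactly $-\int_\Omega b\,\partial_t u\,\partial_t v\,dx$ from the first equation and $+\int_\Omega b\,\partial_t v\,\partial_t u\,dx$ from the second, and these cancel. This cancellation is the heart of the matter: despite the coupling, the energy identity collapses to
\[
E'(t)=-\int_\Omega a(x)\,|\partial_t u|^2\,dx+\int_\Omega\big(G_1\,\partial_t u+G_2\,\partial_t v\big)\,dx.
\]

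Since $a\ge 0$, the damping term is nonpositive; by Cauchy--Schwarz and the elementary bounds $\Vert\partial_t u\Vert,\Vert\partial_t v\Vert\le\sqrt{2E(t)}$ this yields the differential inequality $E'(t)\le \sqrt{2E(t)}\,\big(\Vert G_1(t)\Vert+\Vert G_2(t)\Vert\big)$. Dividing by $2\sqrt{E(t)}$ — or, to avoid the zero set of $E$, working with $\sqrt{E(t)+\varepsilon}$ and letting $\varepsilon\to 0$ — gives $\tfrac{d}{dt}\sqrt{E(t)}\le \tfrac{1}{\sqrt 2}\big(\Vert G_1(t)\Vert+\Vert G_2(t)\Vert\big)$. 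Integrating on $[0,t]$ and using $E(0)\le C\Vert\mathcal{U}_0\Vert_{\mathcal{H}}^2$ for the (purely quadratic) linear energy produces $\sqrt{E(t)}\le C\big(\Vert\mathcal{U}_0\Vert_{\mathcal{H}}+\Vert(G_1,G_2)\Vert_{(L^1([0,T],L^2(\Omega)))^2}\big)$, which is precisely \eqref{RS}. The only genuine technical point is justifying the differentiation of $E$, which is why I pass through smooth data and conclude by density; everything else is routine.
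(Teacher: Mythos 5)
Your proof is correct, but it takes a genuinely different route from the paper's. The paper's proof is a two-line semigroup argument: writing $\mathcal{U}(t)=\mathcal{S}(t)\mathcal{U}_0+\int_0^t\mathcal{S}(t-s)G(s)\,ds$ with $G=(0,0,G_1,G_2)$, and using that $\mathcal{A}$ is maximal dissipative so that $(\mathcal{S}(t))_{t\ge 0}$ is a contraction semigroup, it concludes $\Vert\mathcal{U}(t)\Vert_{\mathcal{H}}\le\Vert\mathcal{U}_0\Vert_{\mathcal{H}}+\Vert G\Vert_{L^1_T(\mathcal{H})}$, and hence \eqref{RS}, since the linear energy is controlled by $\Vert\mathcal{U}(t)\Vert_{\mathcal{H}}^2$. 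You instead redo by hand the computation hidden inside the dissipativity of $\mathcal{A}$: the explicit cancellation of the two $b$-coupling terms in $E'(t)$, the differential inequality for $\sqrt{E(t)}$ (with the $\sqrt{E+\varepsilon}$ regularization), and a density argument for rough data. Your route is more transparent --- it exhibits exactly why the skew-symmetric coupling injects no energy, a fact the paper imports as a black box from the cited maximal dissipativity --- and it would survive in settings where one prefers not to invoke semigroup machinery. What it costs, besides length, is one mild redundancy you should notice: the ``continuity of the Duhamel map on $\mathcal{H}$'' that your density step relies on is, applied to differences by linearity, exactly the inequality $\Vert\mathcal{U}(t)\Vert_{\mathcal{H}}\le\Vert\mathcal{U}_0\Vert_{\mathcal{H}}+\Vert G\Vert_{L^1_T(\mathcal{H})}$, i.e.\ the paper's entire proof. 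This is not circular, since that continuity is standard semigroup theory independent of your energy identity, but it means the general-data estimate already follows from the very tool you use to pass to the limit, so the smooth-data computation, while instructive, could have been skipped altogether.
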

	\begin{proof}
		We note $ G= (0,0,G_1, G_2)$ and $\mathcal{U}=(u,v,\partial_t u,\partial_t v)$ such that $(u,v)$ is the solution of the system \eqref{termesource}. Then, for all $t\in  \left[ 0, T \right] $ we have, from the Duhamel formula,
		\begin{eqnarray}\label{S0}
		\mathcal{U}(t)= \mathcal{S}(t)\mathcal{U}_0 + \mathcal{W} (t), 
		\end{eqnarray}
		where $\mathcal{W} (t) = \displaystyle\int_{0}^{t} \mathcal{S}(t-s) G(s) ds $. Since $ (\mathcal{S}(t))_{t\geq 0} $ is a semi group of contractions, then using \eqref{equiv}, we conclude 
		\begin{eqnarray}\label{PPhiUI}
		\Vert  \mathcal{U} (t)\Vert_{\mathcal{H} } & \leq & \Vert \mathcal{S}(t)\mathcal{U}_0 \Vert_{\mathcal{H} } +  \Vert \mathcal{W}(t) \Vert_{\mathcal{H} }, \nonumber \\
		& \leq &  \Vert \mathcal{U}_0 \Vert_{\mathcal{H} } +  \Vert  G \Vert _{L^1_T(\mathcal{H})}. \nonumber 
		\end{eqnarray}

	\end{proof}
	\begin{proof}[Proof of Theorem \ref{th2.3}]
		For all $(q,r) $ satisfying \eqref{coupleadm} and using the Strichartz estimate for non-homogeneous damped wave equation given in  \cite[Theorem 2.1]{2} (see also \cite[Corollary 1.2]{4}), we get 
		\begin{eqnarray}
		\begin{cases}
		\Vert u \Vert _{ L_{T}^{q} L^{r}} \leq C \left(  \Vert u_{0} \Vert _{ H^{1} } + \Vert u_{1} \Vert _{ L^{2} } + \Vert b \partial _t v \Vert _{ L_{T}^{1} L^{2}} + \Vert G_1 \Vert _{ L_{T}^{1} L^{2}}\right) ,  \\
		\Vert v \Vert _{ L_{T}^{q} L^{r}} \leq C \left(   \Vert v_{0} \Vert _{ H^{1} } + \Vert v_{1} \Vert _{ L^{2} } + \Vert b \partial _t u \Vert _{ L_{T}^{1} L^{2}}  + \Vert G_2 \Vert _{ L_{T}^{1} L^{2}} \right)  .
		\end{cases}\nonumber
		\end{eqnarray}
		Now, using Lemma \ref{Energies identities} and the fact that $b \in L ^ \infty (\Omega) $, there exists $C=C(T)>0$ such that
		$$	\Vert b \partial _t u \Vert_{ L_{T}^{1} L^{2}} = \int_{0}^{T}\Vert b \partial _t u (t)\Vert_{L^{2}(\Omega)} dt \leq C \int_{0}^{T}\sqrt{E(t)} dt \leq  C \left(\Vert \mathcal{U}_0 \Vert_{\mathcal{H}}  + \Vert (G_1,G_2) \Vert_{ (L^1_{T}L^{2})^2} \right) .$$
		Finally, we get 
		$$ \Vert (u,v) \Vert _{ (L^q_{T},L^r))^2} \leq C \left(\Vert \mathcal{U}_0 \Vert_{\mathcal{H}}  + \Vert (G_1,G_2) \Vert_{ (L^1_{T} L^{2})^2} \right) .$$
	\end{proof}
	\vspace{0.5cm}
	Now, we use Theorem \ref{th2.3} to deduce the global existence, the uniqueness and the Strichartz estimate for the solution of the semi-linear coupled wave system \eqref{system}. The proof of this result proceeds in two  steps:  %We proceed in two step.
	\subsection{Local existence and uniqueness} Note that assumptions \eqref{nonlinearity} remain true if we replace $p$ by another larger exponent as long as it remains smaller than $5$. Therefore, we can assume without loss of generality that $3<p<5$.
	
	The aim of this paragraph is to prove the following result:
	\begin{theorem}\label{theo}
		Let	$R_{0}>0$. Then, there exists $T>0$ so that for any $\mathcal{U} _{0} \in \mathcal{H} $, with $\Vert \mathcal{U} _{0}\Vert_{\mathcal{H}}\leq R_{0}$, 
		%if a time $T>0$ satisfies 
		%		$$T+ 2 T^{\frac{5-p}{2}}  \Vert \mathcal{U}_0 \Vert_{\mathcal{H}}^{p-1}  < \frac{1}{C},$$ then
		system \eqref{2} has a unique solution 
		$$\mathcal{U} \in \mathcal{C}([0,T],\mathcal{H}) \cap \left(  \left( L^{q} _T L^{r} \right) ^2 \times \mathcal{C} \left( [0,T],L ^{2}( \Omega) \right) ^{2} \right) \; \text{ for } (q,r)= \left( \dfrac{2p}{p-3},2p \right).$$ 
		Moreover, there exists $C>0$ so that for any $\mathcal{U}_{0},\mathcal{V}_{0} \in \mathcal{H} $ with $ \Vert \mathcal{U}_{0} \Vert _{\mathcal{H}}+ \Vert\mathcal{V}_{0} \Vert _{\mathcal{H}}\leq R_{0}$, and $\mathcal{U}$, $\mathcal{V}$ the associated solutions, then we have $\Vert \mathcal{U}-\mathcal{V} \Vert _{\mathcal{C}([0,T],\mathcal{H})}\leq C  \Vert \mathcal{U}_{0} -\mathcal{V}_{0} \Vert _{\mathcal{H}}$.
		
	\end{theorem}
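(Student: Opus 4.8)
The plan is to solve the Duhamel fixed-point equation by the contraction mapping principle in a complete metric space built from the Strichartz norm of Theorem \ref{th2.3}, which is precisely what allows the subcritical range $3<p<5$ to be reached (the pure energy method only reaches $p<3$). First I fix the admissible pair $(q,r)=\left(\tfrac{2p}{p-3},2p\right)$; a direct check gives $\tfrac1q+\tfrac3r=\tfrac12$ and, since $3<p<5$, one has $q\in(5,\infty)\subset[\tfrac72,\infty]$, so \eqref{coupleadm} holds. I introduce the Banach space
$$X_T:=\mathcal{C}([0,T],\mathcal{H})\cap\left(L^q([0,T],L^r(\Omega))\right)^2,$$
with norm $\|\mathcal{U}\|_{X_T}=\|\mathcal{U}\|_{\mathcal{C}([0,T],\mathcal{H})}+\|(u,v)\|_{(L^q_TL^r)^2}$, and the map
$$\Phi(\mathcal{U})(t):=\mathcal{S}(t)\mathcal{U}_0+\int_0^t\mathcal{S}(t-s)\,\mathcal{F}(\mathcal{U}(s))\,ds,$$
whose fixed points are exactly the mild solutions of \eqref{2}.

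The whole argument rests on a nonlinear estimate for $\mathcal{F}(\mathcal{U})=(0,0,-f_1(u),-f_2(v))$. Using $|f_i(s)|\le C(1+|s|)^p$, Hölder on $\Omega$ (with $r=2p$ matching the growth of $f_i$), and then Hölder in time, which is legitimate because $p\le q$ for $p\le5$, I obtain
$$\|(G_1,G_2)\|_{(L^1_TL^2)^2}\le C\,T+C\,T^{\frac{5-p}{2}}\,\|(u,v)\|_{(L^q_TL^r)^2}^{\,p},$$
the crucial feature being the strictly positive power of $T$ coming from the time-Hölder gap $\tfrac1p-\tfrac1q=\tfrac{5-p}{2p}>0$. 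Applying this to $\Phi$, I control the $\mathcal{C}([0,T],\mathcal{H})$ component by the contractivity of $\mathcal{S}(t)$ together with Lemma \ref{Energies identities}, and the $(L^q_TL^r)^2$ component by Theorem \ref{th2.3}, both applied to \eqref{termesource} with source $G=\mathcal{F}(\mathcal{U})$. This yields $\|\Phi(\mathcal{U})\|_{X_T}\le C R_0+CT+CT^{(5-p)/2}\|\mathcal{U}\|_{X_T}^{\,p}$; choosing $M:=2CR_0+1$ and then $T=T(R_0)$ small enough, $\Phi$ maps the closed ball $B_M=\{\|\mathcal{U}\|_{X_T}\le M\}$ into itself.

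For the contraction I write $\Phi(\mathcal{U})-\Phi(\mathcal{V})=\int_0^t\mathcal{S}(t-s)\bigl(\mathcal{F}(\mathcal{U})-\mathcal{F}(\mathcal{V})\bigr)\,ds$ (zero initial data) and estimate it by the same two linear bounds, using the mean value inequality $|f_i(u)-f_i(w)|\le C(1+|u|^{p-1}+|w|^{p-1})|u-w|$ together with the Hölder splitting $\tfrac12=\tfrac{p-1}{2p}+\tfrac1{2p}$. On $B_M$ this gives $\|\Phi(\mathcal{U})-\Phi(\mathcal{V})\|_{X_T}\le C\,T^{\delta}(1+M^{p-1})\|\mathcal{U}-\mathcal{V}\|_{X_T}$ for some $\delta>0$, so shrinking $T$ further makes the constant $<1$ and the contraction principle provides a unique fixed point in $B_M$, with the stated regularity of $(u,v)$ and of $(\partial_t u,\partial_t v)$ following from the $\mathcal{C}_0$-semigroup Duhamel representation with $L^1$ source. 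Uniqueness in the full class (not only in $B_M$) and the Lipschitz dependence on the data both follow from the same difference estimate: comparing two solutions with data of norm $\le R_0$, the inequality $\|\mathcal{U}-\mathcal{V}\|_{X_{T'}}\le C\|\mathcal{U}_0-\mathcal{V}_0\|_{\mathcal{H}}+CT'^{\delta}\|\mathcal{U}-\mathcal{V}\|_{X_{T'}}$ on subintervals of length $T'$ absorbs the nonlinear term and iterates across $[0,T]$, giving $\|\mathcal{U}-\mathcal{V}\|_{\mathcal{C}([0,T],\mathcal{H})}\le C\|\mathcal{U}_0-\mathcal{V}_0\|_{\mathcal{H}}$. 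The main obstacle is not the fixed point itself but making the nonlinear estimate sharp: one must route the loss through the Strichartz norm $L^q_TL^{2p}$ rather than the energy norm, and the subcritical range $p<5$ is recovered precisely because $r=2p$ matches the polynomial growth of $f_i$ while the exponent $\tfrac{5-p}{2}$ stays positive, supplying the smallness in $T$ needed to close both the self-mapping and the contraction.
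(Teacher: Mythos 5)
Your proof is correct and follows essentially the same route as the paper: a contraction argument for the Duhamel map in the mixed energy--Strichartz space, with the nonlinearity estimated in $L^1_T L^2$ via H\"older in space (exponent $r=2p$) and in time (gaining the subcritical factor $T^{(5-p)/2}$), smallness of $T$ depending only on $R_0$, and uniqueness plus Lipschitz stability obtained by iterating the difference estimate on small subintervals. The only cosmetic differences (your additive $CT$ term coming from $|f_i(s)|\le C(1+|s|)^p$ versus the paper's multiplicative $T\Vert \mathcal{U}\Vert_{X_T}$ obtained from $f_i(0)=0$, and your single application of Theorem \ref{th2.3} to $\Phi(\mathcal{U})$ versus the paper's separate treatment of the free evolution and the Duhamel term) do not affect the argument.
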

	
	The proof of this result is based on the fixed point theorem. First, we search a suitable Banach space. Second, thanks to Duhamel's formula, we search a fixed point for the map $\phi $ defined by:
	\begin{eqnarray}\label{phideu}
	\phi (\mathcal{U})(t)& = & S(t) \mathcal{U}_{0} + \int _{0}^{t} \mathcal{S}( t-s) \mathcal{F}(\mathcal{U})(s) ds. 
	\end{eqnarray}
	All the difficulty is to find a complete space on which $\phi$ is defined and strictly contracting. The choice of the resolution space will be guided by the Strichartz estimates which give a gain in integrability compared to the Sobolev embeddings.

	Let $T>0$, and for the admissible couple 
	\begin{eqnarray}\label{qr}
	(q,r)= \left( \dfrac{2p}{p-3},2p \right) ,
	\end{eqnarray}
	we define the following Banach space:
	\begin{eqnarray}
	E_{T} =  \mathcal{C}( [0,T],H_{0} ^{1}( \Omega) ) \cap L^{q}([0,T],L^{r} (\Omega)),\nonumber
	\end{eqnarray}
	with norm: 
	\begin{eqnarray}
	\Vert . \Vert _{E_{T} }  &: = & \max \left( \Vert . \Vert _{L^{q} _{T}L^{r}} , \Vert . \Vert _{L^{\infty} _{T}H^{1} _{0}} \right) ,\nonumber
	\end{eqnarray}
	then we introduce space $X_{T} = E_{T} ^{2} \times  \mathcal{C} ( [0,T],L ^{2}( \Omega) )^{2}$, with norm
	\begin{eqnarray}
	\Vert Y \Vert _{X_{T} }& : = & \Vert Y_{1} \Vert _{E_{T}^{2} } + \Vert Y_{2} \Vert _{(L^{\infty} _{T}L^{2} )^{2} }; \hspace{1,5cm} \text{for} \;  Y=(Y_{1}, Y_{2}) \; \in \; X_{T}\; .\nonumber  
	\end{eqnarray}

	We need the following lemmas:
	
	\begin{lemma}\label{lem2}
		Let $ \mathcal{U}_{0}  \in \mathcal{H} $. Then, for all $\mathcal{U} \in X_T $ we have
		\begin{eqnarray}\label{phiU}
		\Vert \phi (\mathcal{U}) \Vert_{ X_{T}} \lesssim \Vert \mathcal{U}_0 \Vert_{ \mathcal{H}} + \Vert \mathcal{F} (\mathcal{U}) \Vert _{\left( L_T ^1 L^2 \right) ^4} \; .
		\end{eqnarray}
		Moreover, for $\mathcal{U}, \mathcal{V} \in X_T $ we have
		\begin{eqnarray}\label{phiUV}
		\Vert \phi (\mathcal{U})- \phi (\mathcal{V}) \Vert_{ X_{T}} \lesssim  \Vert \mathcal{F} (\mathcal{U})-\mathcal{F} (\mathcal{V}) \Vert _{\left( L_T ^1 L^2 \right) ^4} \; .
		\end{eqnarray}
		
	\end{lemma}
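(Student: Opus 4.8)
The plan is to recognize $\phi(\mathcal{U})$ as the solution of the \emph{linear} system \eqref{termesource} and then simply invoke the two estimates already proved in this section. Indeed, by its definition \eqref{phideu} together with the Duhamel formula \eqref{S0}, $\phi(\mathcal{U})$ is the solution of \eqref{termesource} with initial data $\mathcal{U}_0$ and source $G=\mathcal{F}(\mathcal{U})=(0,0,-f_1(u),-f_2(v))^T$, where $(u,v)$ are the first two components of the \emph{input} $\mathcal{U}$ (so $G$ is a fixed element of $(L^1_T L^2)^4$, independent of $\phi(\mathcal{U})$ itself). Writing $\phi(\mathcal{U})=(w,z,\partial_t w,\partial_t z)$, the norm $\Vert\phi(\mathcal{U})\Vert_{X_T}$ splits into three pieces: the $L^q_T L^r$ norms of $w$ and $z$, their $L^\infty_T H^1_0$ norms, and the $L^\infty_T L^2$ norms of $\partial_t w$ and $\partial_t z$.

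First I would control the integrability pieces by the Strichartz estimate. Since $(q,r)=\left(\dfrac{2p}{p-3},2p\right)$ satisfies \eqref{coupleadm}, Theorem \ref{th2.3} applied to $\phi(\mathcal{U})$ gives
$$\Vert (w,z)\Vert_{(L^q_T L^r)^2}\leq C\left(\Vert\mathcal{U}_0\Vert_{\mathcal{H}}+\Vert (G_1,G_2)\Vert_{(L^1_T L^2)^2}\right).$$
For the remaining $L^\infty$ pieces I would use the energy identity of Lemma \ref{Energies identities}: since the quadratic energy $E(t)$ of \eqref{termesource} dominates $\tfrac12\big(\Vert w\Vert_{H^1_0}^2+\Vert z\Vert_{H^1_0}^2+\Vert\partial_t w\Vert_{L^2}^2+\Vert\partial_t z\Vert_{L^2}^2\big)$ (using Poincar\'e to compare $\Vert\nabla\cdot\Vert$ with $\Vert\cdot\Vert_{H^1_0}$), taking the supremum over $t\in[0,T]$ in \eqref{RS} bounds all four of these norms by the same right-hand side. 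Observing that the first two components of $\mathcal{F}$ vanish, so that $\Vert (G_1,G_2)\Vert_{(L^1_T L^2)^2}=\Vert\mathcal{F}(\mathcal{U})\Vert_{(L^1_T L^2)^4}$, and summing the three pieces yields \eqref{phiU}. The membership of $\phi(\mathcal{U})$ in the continuous-in-time spaces $\mathcal{C}([0,T],H^1_0)$ and $\mathcal{C}([0,T],L^2)$ is the standard regularity of a $\mathcal{C}_0$-semigroup: $t\mapsto\mathcal{S}(t)\mathcal{U}_0$ is $\mathcal{H}$-continuous, and the Duhamel term $\mathcal{W}(t)$ is $\mathcal{H}$-continuous whenever $G\in L^1_T(\mathcal{H})$.

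The Lipschitz bound \eqref{phiUV} then follows from linearity with essentially no extra work. The initial data $\mathcal{U}_0$ is common to $\phi(\mathcal{U})$ and $\phi(\mathcal{V})$, so it cancels in the difference and
$$\phi(\mathcal{U})(t)-\phi(\mathcal{V})(t)=\int_0^t \mathcal{S}(t-s)\big(\mathcal{F}(\mathcal{U})-\mathcal{F}(\mathcal{V})\big)(s)\,ds;$$
that is, $\phi(\mathcal{U})-\phi(\mathcal{V})$ is the solution of \eqref{termesource} with \emph{zero} initial data and source $\mathcal{F}(\mathcal{U})-\mathcal{F}(\mathcal{V})$. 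Re-running the previous paragraph verbatim with $\mathcal{U}_0=0$ produces \eqref{phiUV}. There is no genuine obstacle here: the entire content of the lemma is repackaging Theorem \ref{th2.3} and Lemma \ref{Energies identities} into the single norm $\Vert\cdot\Vert_{X_T}$. The only point requiring a little care is matching each summand of the $X_T$ norm to the correct estimate (Strichartz for the $L^q_T L^r$ part, the energy identity for the $L^\infty_T$ parts) and checking time-continuity so that $\phi$ genuinely maps into $X_T$.
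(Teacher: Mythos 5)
Your proof is correct, and it uses the same two ingredients as the paper (Theorem \ref{th2.3} and Lemma \ref{Energies identities}), but packaged more efficiently. The paper first splits $\phi(\mathcal{U})(t)=\mathcal{S}(t)\mathcal{U}_0+\mathcal{W}(t)$ and estimates the two pieces separately: for the free part $\psi=\mathcal{S}(\cdot)\mathcal{U}_0$ it re-derives a Strichartz bound by hand, applying the \emph{scalar} damped-wave Strichartz estimate of \cite[Theorem 2.1]{2} to each component with the coupling terms $b\,\partial_t\varphi$, $b\,\partial_t\varphi_1$ treated as $L^1_TL^2$ sources and absorbed via the contraction property (equations \eqref{est1}--\eqref{St}); only for the Duhamel part $\mathcal{W}$ does it invoke Theorem \ref{th2.3} together with Lemma \ref{Energies identities}. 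You instead observe that $\phi(\mathcal{U})$ \emph{as a whole} is the solution of the inhomogeneous linear system \eqref{termesource} with data $\mathcal{U}_0$ and fixed source $\mathcal{F}(\mathcal{U})$, and that Theorem \ref{th2.3} and Lemma \ref{Energies identities} are already stated for nonzero data \emph{and} nonzero source simultaneously, so a single application of each yields \eqref{phiU}; the Lipschitz bound \eqref{phiUV} then follows by linearity with zero data, exactly as in the paper. This makes the paper's entire treatment of the free part redundant (it is just Theorem \ref{th2.3} with $G_1=G_2=0$), and it also avoids a slight awkwardness in the paper where $\Vert\psi\Vert$ is first bounded in terms of itself before being closed by \eqref{est1}. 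The one point your write-up handles that the paper leaves implicit is membership of $\phi(\mathcal{U})$ in the continuous-in-time spaces, which you correctly dispatch via standard $\mathcal{C}_0$-semigroup regularity of the Duhamel term when $\mathcal{F}(\mathcal{U})\in L^1_T(\mathcal{H})$.
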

	
	\begin{proof}
		Let $ \mathcal{U}_{0} =(u_0, v_0, u_1,v_1) \in \mathcal{H} $ and let $ \mathcal{U} \in X_T $. For all $t\geq 0$ we have from \eqref{phideu}
		\begin{eqnarray}\label{decompphiU} \phi (\mathcal{U})(t)= \mathcal{S}(t)\mathcal{U}_0 + \mathcal{W} (t), 
		\end{eqnarray}
		where $\mathcal{W} (t) = \displaystyle\int_{0}^{t} \mathcal{S}(t-s) \mathcal{F}(\mathcal{U})(s) ds $, then we have
		\begin{eqnarray}\label{PPhiU}
		\Vert  \phi (\mathcal{U}) \Vert_{X_T } \leq \Vert \mathcal{S}(.)\mathcal{U}_0 \Vert_{X_T } +  \Vert \mathcal{W} \Vert_{X_T }.
		\end{eqnarray}
		%	On the other hand, we have
		%	$$  \Vert \mathcal{S}(.)\mathcal{U}_0 \Vert_{X_T }  \lesssim \Vert \mathcal{U}_0 \Vert_{ \mathcal{H}}  .$$
		For all $ t \in \left[ 0,T \right]  $, we note $\psi(t)= \mathcal{S}(t)\mathcal{U}_{0}=(\varphi, \varphi_1 , \partial _t \varphi  ,\partial _t \varphi_1 )$.	Since $ (\mathcal{S}(t))_{t\geq 0} $ is a semi group of contractions, then we have
		$$ \forall t\geq0 , \; \Vert \mathcal{S}(t) \mathcal{U} _0  \Vert _{\mathcal{H}} \; \leq \; \Vert   \mathcal{U} _0  \Vert _{\mathcal{H}}, $$
		so we get 
		\begin{eqnarray}\label{est1}
		\Vert \psi \Vert _{L_T ^{\infty}(\mathcal{H})}= \Vert \mathcal{S}(.) \mathcal{U} _0  \Vert _{L_T ^{\infty}(\mathcal{H})} \; \leq\;   \Vert   \mathcal{U} _0  \Vert _{\mathcal{H}} . 
		\end{eqnarray}
		Furthermore, using the Strichartz estimate for the damped wave equation given in \\  \cite[Theorem 2.1]{2}, we get:
		\begin{eqnarray}
		\begin{cases}
		\Vert \varphi \Vert _{ L_{T}^{q} L^{r}} \lesssim  \Vert u_{0} \Vert _{ H^{1} } + \Vert u_{1} \Vert _{ L^{2} } + \Vert b \partial _t \varphi_1 \Vert _{ L_{T}^{1} L^{2}}, \\
		\Vert \varphi_1 \Vert _{ L_{T}^{q} L^{r}} \lesssim  \Vert v_{0} \Vert _{ H^{1} } + \Vert v_{1} \Vert _{ L^{2} } + \Vert b \partial _t \varphi \Vert _{ L_{T}^{1} L^{2}} ,
		\end{cases}\nonumber
		\end{eqnarray}
		then
		\begin{eqnarray}
		\Vert \psi \Vert _{ (L_{T}^{q} L^{r})^2 \times (L_{T}^{\infty} L^{2})^2 } &=& \Vert \varphi \Vert _{ L_{T}^{q} L^{r}} + \Vert \varphi_1 \Vert _{ L_{T}^{q} L^{r}} + \Vert \partial _t \varphi\Vert _{ L_{T}^{\infty} L^{2}} + \Vert \partial _t \varphi_1 \Vert _{ L_{T}^{\infty} L^{2}}, \nonumber \\
		& \lesssim &   \Vert \mathcal{U}_0 \Vert _{\mathcal{H}}  + \Vert \partial _t \varphi \Vert _{ L_{T}^{\infty} L^{2}} + \Vert \partial _t \varphi_1 \Vert _{ L_{T}^{\infty} L^{2}}  + \Vert b \partial _t \varphi \Vert _{ L_{T}^{1} L^{2}} + \Vert b \partial _t \varphi_1 \Vert _{ L_{T}^{1} L^{2}} . \nonumber 
		\end{eqnarray}
		Using the fact that $b \in L ^ \infty (\Omega) $, we obtain
		\begin{eqnarray}
		\begin{cases}
		\Vert b \partial _t \varphi \Vert _{ L_{T}^{1} L^{2}} \lesssim  T \Vert \partial _t \varphi \Vert _{ L_{T}^{\infty} L^{2}} ,  \\
		\Vert b \partial _t \varphi_1 \Vert _{ L_{T}^{1} L^{2}} \lesssim   T \Vert \partial _t \varphi_1 \Vert _{ L_{T}^{\infty} L^{2}} ,
		\end{cases} \nonumber
		\end{eqnarray}
		thus
		\begin{eqnarray}\label{est2}
		\Vert \psi \Vert _{ (L_{T}^{q} L^{r})^2 \times (L_{T}^{\infty} L^{2})^2 } \lesssim \Vert \mathcal{U}_0 \Vert _{\mathcal{H}} + \Vert \psi \Vert _{L_T ^{\infty}(\mathcal{H})}.  
		\end{eqnarray}
		Finally, from \eqref{est1} and \eqref{est2} we obtain
		\begin{eqnarray}\label{St}
		\Vert \mathcal{S}(.)\mathcal{U}_0 \Vert_{X_T } = \Vert \psi \Vert _{ X_T } \lesssim  \Vert \mathcal{U}_0 \Vert _{\mathcal{H}}.   
		\end{eqnarray}

		Recalling the definition of $\mathcal{W}$ in \eqref{decompphiU}, we denote $\mathcal{W} = (w, w_1 , \partial _t w  ,\partial _t w_1 )$.	On the other hand, due to Duhamel's formula,  $ (w, w_1) $ is the solution of problem
		\begin{eqnarray}
		\begin{cases}
		\partial _{tt} w  - \Delta w + a \partial _t w + b \partial _t w _ 1  + f_1(u)=0 & in \; \Omega \times \mathbb{R}_{+} ^{*} ,   \\
		\partial _{tt}	w _ 1  - \Delta w_ 1 - b \partial _t w   +f_2(v)=0  & in \; \Omega \times \mathbb{R}_{+} ^{*} ,  \\
		w = w_1 =0  & on \; \Gamma \times \mathbb{R}_{+} ^{*} , \\
		(w, w_1 , \partial _t w  ,\partial _t w_1 ) (0) =0  & in \; \Omega  .  
		\end{cases}\nonumber
		\end{eqnarray}
		We denote $\mathcal{U}=(u,v,\tilde{u},\tilde{v} )\in X_{T}$, where $(u,v)$ are the first components of $\mathcal{U}$.
		Then, we have 
		$$ \Vert \mathcal{W} \Vert_{X_T } = \Vert (w , w_1) \Vert _{(L_T ^{\infty}H_0 ^1)^2 } + \Vert (w, w_1) \Vert _{(L_T ^{q}L^r)^2 } + \Vert (\partial _t w  , \partial _t w_1 ) \Vert _{(L_T ^{\infty}L ^2)^2 }. $$
		Using the Strichartz estimate given in Theorem \ref{th2.3}, we get
		
		\begin{eqnarray}
		\Vert (w , w_1) \Vert _{(L_T ^{q}L^r)^2 } &\lesssim &\Vert \mathcal{F}(\mathcal{U}) \Vert _{(L_T ^{1}L^2) ^4 } ,\nonumber
		\end{eqnarray}
		
		then we obtain
		\begin{eqnarray}\label{est4}
		\Vert \mathcal{W} \Vert_{X_T } &\lesssim & \Vert (w , w_1) \Vert _{(L_T ^{\infty}H_0 ^1)^2 } + \Vert (\partial _t w  , \partial _t w_1 ) \Vert _{(L_T ^{\infty}L ^2)^2 }+ \Vert \mathcal{F}(\mathcal{U}) \Vert _{(L_T ^{1}L^2) ^4 }, \nonumber \\ 
		& \lesssim & \Vert \mathcal{W} \Vert _{L_T ^{\infty}(\mathcal{H}) }  + \Vert \mathcal{F}(\mathcal{U}) \Vert _{(L_T ^{1}L^2) ^4 }. 
		\end{eqnarray}
		%	Since $ (\mathcal{S}(t))_{t\geq 0} $ is a semi group of contraction, then for all $t\geq 0 $ we have
		%	\begin{eqnarray}
		%		\Vert W(t,.) \Vert _{\mathcal{H}} & = & \left \Vert \int_{0}^{t} \mathcal{S} (t-s) \mathcal{F} (\mathcal{U})(s) ds \right \Vert _{\mathcal{H}}, \nonumber \\
		%		& \leq &  \int_{0}^{t} \Vert \mathcal{S} (t-s) \mathcal{F} (\mathcal{U})(s) \Vert _{\mathcal{H}} ds ,\nonumber \\
		%		& \leq &  \int_{0}^{t} \Vert  \mathcal{F} (\mathcal{U})(s) \Vert _{\mathcal{H}} ds ,\nonumber \\
		%			& \leq &  \Vert  \mathcal{F} (\mathcal{U}) \Vert _{L^1_T(\mathcal{H})} ds . \nonumber
		%	\end{eqnarray}
		%Since 
		%\begin{eqnarray}
		%	\Vert  \mathcal{F} (\mathcal{U}) \Vert _{L^1_T(\mathcal{H})} &=& \Vert (0,0, -f_1(u),-f_2(v)) \Vert _{L^1_T(\mathcal{H})},\nonumber \\
		%	&=& \Vert f_1(u) \Vert _{L^1_TL^2} + \Vert f_2(v) \Vert _{L^1_TL^2} ,\nonumber \\
		%	&=& \Vert  \mathcal{F} (\mathcal{U}) \Vert _{(L^1_TL^2)^4}, \nonumber
		%\end{eqnarray}
		From Lemma \ref{Energies identities}, we have for all $t\geq 0 $
		\begin{eqnarray}
		\Vert \mathcal{W}(t,.) \Vert _{\mathcal{H}} \lesssim  \Vert  \mathcal{F} (\mathcal{U}) \Vert _{(L^1_TL^2)^4},\nonumber
		\end{eqnarray}
		so
		\begin{eqnarray}\label{est3}
		\Vert \mathcal{W} \Vert _{L_T ^{\infty}(\mathcal{H}) }  \lesssim  \Vert  \mathcal{F} (\mathcal{U}) \Vert _{(L^1_TL^2)^4}.
		\end{eqnarray}
		Consequently, injecting \eqref{est3} in \eqref{est4}  we get
		\begin{eqnarray}\label{Fu}
		\Vert \mathcal{W} \Vert_{X_T } \lesssim \Vert \mathcal{F}(\mathcal{U}) \Vert _{(L_T ^{1}L^2) ^4 }.
		\end{eqnarray}
		Substituting \eqref{St} and \eqref{Fu} in \eqref{PPhiU}, we get the estimate \eqref{phiU}. Indeed, \eqref{phiUV} is obtained similarly.
	\end{proof}
	\begin{lemma}\label{lem1}
		Let $\mathcal{U} \in X_T $, then we have the following estimate:
		\begin{eqnarray}\label{lemm}
		\Vert \mathcal{F}(\mathcal{U}) \Vert _{\left( L_T ^1 L^2 \right) ^4} \lesssim \Vert \mathcal{U} \Vert _{X_T} (T+T^\theta \Vert \mathcal{U} \Vert _{X_T} ^{p-1}) ,
		\end{eqnarray}
		where $\theta = \dfrac{5-p}{2} > 0$. Similar to the proof of \eqref{lemm} we can show that for every\\ $\; \mathcal{U}, \; \mathcal{V} \in X_T $ we have 
		\begin{eqnarray}\label{lemmm}
		\Vert \mathcal{F}(\mathcal{U}) -  \mathcal{F}(\mathcal{V}) \Vert _{\left( L_T ^1 L^2 \right) ^4} \lesssim \Vert \mathcal{U} - \mathcal{V} \Vert _{X_T} \left( T+T^\theta (\Vert \mathcal{U} \Vert _{X_T} ^{p-1} + \Vert \mathcal{V} \Vert _{X_T} ^{p-1})\right)  .
		\end{eqnarray}
	\end{lemma}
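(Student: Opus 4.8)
The plan is to reduce the vector-valued bound to two scalar nonlinear estimates and then carry out the standard subcritical argument. Since $\mathcal{F}(\mathcal{U})=(0,0,-f_1(u),-f_2(v))^T$, only two components are nonzero, so it suffices to prove for each scalar component $w$ (playing the role of $u$ or $v$) that $\Vert f_i(w)\Vert_{L_T^1 L^2}\lesssim \Vert w\Vert_{E_T}\bigl(T+T^\theta\Vert w\Vert_{E_T}^{p-1}\bigr)$, and then to sum, using that the first two components of $\mathcal{U}$ obey $\Vert u\Vert_{E_T},\Vert v\Vert_{E_T}\le\Vert\mathcal{U}\Vert_{X_T}$. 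The first step I would take is to sharpen the growth bound of \eqref{nonlinearity}: because $f_i(0)=0$ and $\vert f_i'(s)\vert\le C(1+\vert s\vert)^{p-1}$, the mean value theorem yields the pointwise bound
\[
\vert f_i(s)\vert \le C(1+\vert s\vert)^{p-1}\vert s\vert \le C\bigl(\vert s\vert+\vert s\vert^{p}\bigr),
\]
which vanishes at the origin. This refinement over the cruder $\vert f_i(s)\vert\le C(1+\vert s\vert)^p$ is precisely what allows $\Vert w\Vert_{E_T}$ to be factored out with no additive constant.

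With this pointwise bound, I would estimate at a fixed time $t$. Since here $r=2p$, one has $\Vert\,\vert w(t)\vert^p\,\Vert_{L^2}=\Vert w(t)\Vert_{L^{2p}}^p=\Vert w(t)\Vert_{L^r}^p$, whence
\[
\Vert f_i(w(t))\Vert_{L^2}\le C\bigl(\Vert w(t)\Vert_{L^2}+\Vert w(t)\Vert_{L^r}^p\bigr).
\]
Integrating over $[0,T]$ splits the bound in two. For the low-order term I use Poincar\'e, $\Vert w(t)\Vert_{L^2}\lesssim\Vert w(t)\Vert_{H_0^1}$, to get $\int_0^T\Vert w(t)\Vert_{L^2}\,dt\le T\Vert w\Vert_{L_T^\infty H_0^1}\le T\Vert w\Vert_{E_T}$. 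For the high-order term I apply H\"older in time with exponents $a=\tfrac{2}{5-p}$ and $b=\tfrac{2}{p-3}$, chosen so that $pb=q=\tfrac{2p}{p-3}$; this gives $\int_0^T\Vert w(t)\Vert_{L^r}^p\,dt\le T^{1/a}\Vert w\Vert_{L_T^q L^r}^p=T^\theta\Vert w\Vert_{E_T}^p$ with $\theta=1/a=\tfrac{5-p}{2}$. Combining and factoring out $\Vert w\Vert_{E_T}$ gives the scalar estimate, and summing over $i=1,2$ yields \eqref{lemm}.

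For the difference estimate \eqref{lemmm} I would write, for scalar $w,\tilde w$,
\[
f_i(w)-f_i(\tilde w)=(w-\tilde w)\int_0^1 f_i'\bigl(\tilde w+\tau(w-\tilde w)\bigr)\,d\tau,
\]
and use $\vert f_i'(\xi)\vert\le C(1+\vert\xi\vert)^{p-1}$ together with $\vert\xi\vert\le\vert w\vert+\vert\tilde w\vert$ to obtain the pointwise bound $\vert f_i(w)-f_i(\tilde w)\vert\le C\bigl(1+\vert w\vert^{p-1}+\vert\tilde w\vert^{p-1}\bigr)\vert w-\tilde w\vert$. Taking the $L^2$ norm at fixed time, the constant part contributes $\Vert w-\tilde w\Vert_{L^2}$, while H\"older in space with the conjugate pair $\tfrac{2p}{p-1}$ and $2p$ gives $\Vert\,\vert w\vert^{p-1}\vert w-\tilde w\vert\,\Vert_{L^2}\le\Vert w\Vert_{L^r}^{p-1}\Vert w-\tilde w\Vert_{L^r}$, so that
\[
\Vert f_i(w)-f_i(\tilde w)\Vert_{L^2}\le C\Vert w-\tilde w\Vert_{L^2}+C\bigl(\Vert w\Vert_{L^r}^{p-1}+\Vert\tilde w\Vert_{L^r}^{p-1}\bigr)\Vert w-\tilde w\Vert_{L^r}.
\]
Integrating in time exactly as above — the low-order term by $T\Vert\cdot\Vert_{L_T^\infty H_0^1}$ and the high-order term by the generalized H\"older inequality producing the weight $T^\theta$ — gives \eqref{lemmm}.

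I do not expect a deep obstacle here; the only point requiring genuine care is the bookkeeping of the H\"older exponents in time. The reduction to $3<p<5$ made at the start of this subsection is exactly what guarantees that $a=\tfrac{2}{5-p}$ and $b=\tfrac{2}{p-3}$ are both finite and no smaller than $1$, so that $\theta>0$ and the $T^\theta$ gain is genuine; outside this range the time-integrability step would break down and a different splitting would be needed.
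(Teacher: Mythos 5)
Your proposal is correct and follows essentially the same route as the paper's proof: the mean value theorem applied with $f_i(0)=0$ to get the pointwise bound $|f_i(s)|\lesssim |s|+|s|^p$, reduction to the two scalar components, the identity $\Vert\,|w|^p\,\Vert_{L^2}=\Vert w\Vert_{L^{2p}}^p$, and H\"older in time with exponents $\tfrac{2}{5-p}$, $\tfrac{2}{p-3}$ to produce the factor $T^\theta$ in front of the Strichartz norm, with the analogous three-factor H\"older argument for the difference estimate. The only cosmetic difference is that you make the Poincar\'e step for the low-order term explicit, which the paper leaves implicit.
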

	\begin{proof}
		Let $\mathcal{U}=(u,v,u_1,v_1) \in X_T$. We have
		\begin{eqnarray}\label{FU}
		\Vert \mathcal{F}(\mathcal{U}) \Vert _{\left( L_T ^1 L^2 \right) ^4} = \Vert f_1(u)  \Vert _{ L_T ^1 L^2 } + \Vert f_2(v) \Vert _{ L_T ^1 L^2 }. 
		\end{eqnarray}
		Using the fact that $f_i(0)=0$, for $i=1,2$, we have for all $s \in \mathbb{R} $,
		\begin{eqnarray}\label{f(s)}
		\vert	f_i(s) \vert  =  \vert f_i(s) - f_i(0) \vert  & \lesssim & \vert	s \vert  \underset{ t \in \left[  0,1 \right] }{ \sup } \vert f_i^\prime (ts) \vert 
		\lesssim \; \vert	s \vert  \underset{ t \in \left[  0,1 \right] }{ \sup } \left( 1 + \vert ts \vert \right) ^{p-1} , \nonumber \\
		& \lesssim & \vert	s \vert  \left( 1 + \vert s \vert \right) ^{p-1} 
		\lesssim  \vert	s \vert  \left( 1 + \vert s \vert ^{p-1}  \right) . 
		\end{eqnarray}
		Then, we get 
		\begin{eqnarray}\label{f(u)}
		\Vert f_1(u) \Vert _{ L_T ^1 L^2 } &\lesssim & \Vert u (1 + \vert u \vert ^{p-1} ) \Vert _{ L_T ^1 L^2 }\leq  \Vert u  \Vert _{ L_T ^1 L^2 } + \Vert u \vert u \vert ^{p-1}  \Vert _{ L_T ^1 L^2 } , \nonumber \\
		&\lesssim & T \Vert u  \Vert _{ L_T ^\infty L^2 } + \Vert  u \vert u \vert ^{p-1}  \Vert _{ L_T ^1 L^2 } .
		\end{eqnarray}
		%	let  $(q,r)$ satisfying \eqref{coupleadm}. We exploit H\"{o}lder's inequality, we get
		%	$$ \Vert u \vert u \vert ^{p-1}  \Vert _{ L_T ^1 L^2 } \leq \Vert u   \Vert _{ L_T ^q L^r } \Vert  \vert u \vert ^{p-1}  \Vert _{ L_T ^{q^\prime} L^{r_2} }\; ; \text{with} \; \frac{1}{r} +\frac{1}{r_2}= \frac{1}{2} , \; \frac{1}{q} +\frac{1}{q^\prime}=1 .$$
		%	Now, we will show the existence of $\theta \geq 0$ such that
		%	\begin{eqnarray}\label{interpolation}
		%	 \Vert  \vert u \vert ^{p-1}  \Vert _{ L_T ^{q^\prime} L^{r_2} } \lesssim T^ \theta  \Vert u   \Vert _{ L_T ^q L^r } ^{p-1}.
		%	\end{eqnarray}
		We have for all $t \in \left[ 0, T \right] $
		\begin{eqnarray}
		\Vert u  \vert u \vert ^{p-1} \Vert _{  L_T ^1 L^2}  &= &  \int_{0} ^{T} \Vert u \vert u \vert ^{p-1} \Vert _{ L^2}  ds  			 =   \int_{0} ^{T} \Vert u  \Vert _{ L^{2p} }^{p} ds  
		=     \Vert u  \Vert _{ L_{T}^pL^{2p} } ^{p} . \nonumber
		\end{eqnarray}
		Since $3<p <5 $, we use the fact that $  \frac{p-3}{2} + \frac{5-p}{2} =1 $; and applying the H\"{o}lder inequality for the couple $\left( \frac{2}{p-3}, \frac{2}{5-p} \right) $, we get
		\begin{eqnarray}\label{2.24}
		\Vert u  \Vert _{L_T ^{p} L^{2p}}  & = & \left(  \int_{0}^{T} 1 \; . \Vert u (t,.) \Vert _{L^{2p}} ^{p} dt \right) ^{\frac{1}{p}}, \nonumber \\
		&\lesssim &  \left( \int_{0}^{T} 1^{\frac{2}{5-p}} dt \right) ^{\frac{5-p}{2p}}  \left( \int_{0}^{T} \Vert u (t,.) \Vert _{L^{2p}} ^{\frac{2p}{p-3}} dt \right) ^{\frac{p-3}{2p}}  , \nonumber \\
		&\lesssim & T ^{\frac{\theta}{p}} \Vert u  \Vert _{L_T ^{\frac{2p}{p-3}} L^{2p}},
		\end{eqnarray}
		where $ \theta = \frac{5-p}{2}>0$. Then, from \eqref{f(u)} we obtain
		\begin{eqnarray}\label{fu}
		\Vert f_1(u) \Vert _{ L_T ^1 L^2 } &\lesssim &  T \Vert u  \Vert _{ L_T ^\infty L^2 } + T ^{\theta} \Vert u  \Vert _{L_T ^{\frac{2p}{p-3}} L^{2p}} ^p,\nonumber \\
		&\lesssim &  T \Vert \mathcal{U}   \Vert _{X_T} + T ^{\theta} \Vert \mathcal{U}   \Vert _{X_T} ^p,\nonumber \\
		&\lesssim &   \Vert \mathcal{U} \Vert _{X_T}\left(  T+ T ^{\theta} \Vert \mathcal{U}   \Vert _{X_T} ^{p-1}\right)  .
		\end{eqnarray}
		In addition, 
		\begin{eqnarray}\label{fv}
		\Vert f_2(v) \Vert _{ L_T ^1 L^2 } &\lesssim &   \Vert \mathcal{U}  \Vert _{X_T}\left(  T+ T ^{\theta} \Vert \mathcal{U}  \Vert _{X_T} ^{p-1}\right)  .
		\end{eqnarray}
		Finally, injecting \eqref{fu} and \eqref{fv} in \eqref{FU}, we get estimate \eqref{lemm}.

		Let	$ \mathcal{U}=(u,v,u_1,v_1),\; \mathcal{V} =(w, y,w_1 ,y_1) $ in $X_T$. We have
		\begin{eqnarray}\label{FUV}
		\Vert \mathcal{F}(\mathcal{U}) -  \mathcal{F}(\mathcal{V}) \Vert _{\left( L_T ^1 L^2 \right) ^4} &=& \left\Vert f_1(w) - f_1(u) \right\Vert _{L_{T}^{1}L^{2}} + \left \Vert f_2(y) - f_2(v) \right \Vert _{L_{T}^{1} L^{2}}.  
		\end{eqnarray}
		For $i=1,2$, we have for all $ s_1, \; s_2 \in \mathbb{R} $ 
		\begin{eqnarray}
		\vert f_i (s_1)-f_i(s_2) \vert & \lesssim & \int_{0}^{1} \vert f_i ^\prime (t s_1 + (1-t) s_2) \vert \vert s_1-s_2 \vert dt ,\nonumber \\
		& \lesssim & \int_{0}^{1} \left(  1+ \vert  t s_1 + (1-t) s_2) \vert \right) ^{p-1} \vert s_1-s_2 \vert dt ,\nonumber \\
		& \lesssim & \vert s_1-s_2 \vert \left( 1 + \vert s_1 \vert ^{p-1} + \vert s_2 \vert ^{p-1} \right).\nonumber
		\end{eqnarray}
		Then,  using similar computation as before and H\"older estimates, we get 
		\begin{eqnarray}\label{yv}
		\left\Vert f_1(y) - f_1(v) \right\Vert _{L_{T}^{1}L^{2}} & \lesssim &  \Vert y-v \Vert _{ L_T ^1 L^2 } +T ^{\theta} \Vert y-v \Vert _{ L_T ^{\frac{2p}{p-3}} L^{2p}}  \left(\Vert y  \Vert _{L_T ^{\frac{2p}{p-3}} L^{2p}} ^{p-1} +\Vert v  \Vert _{L_T ^{\frac{2p}{p-3}} L^{2p}} ^{p-1}\right),\nonumber \\
		& \lesssim &  \left\Vert \mathcal{U}-\mathcal{V} \right\Vert _{X_{T}}  \left( T + T ^{\theta}(\left\Vert \mathcal{U} \right\Vert _{X_{T}} ^{p-1}+\left\Vert \mathcal{V} \right\Vert _{X_{T}} ^{p-1}) \right) .
		\end{eqnarray}
		Furthermore,
		\begin{eqnarray}\label{wu}
		\left\Vert f_2(w) - f_2(u) \right\Vert _{L_{T}^{1}L^{2}} & \lesssim &  \left\Vert \mathcal{U}-\mathcal{V} \right\Vert _{X_{T}}  \left( T + T ^{\theta}(\left\Vert \mathcal{U} \right\Vert _{X_{T}} ^{p-1}+\left\Vert \mathcal{V} \right\Vert _{X_{T}} ^{p-1}) \right) .
		\end{eqnarray}
		Finally, injecting \eqref{yv} and \eqref{wu} in \eqref{FUV}, we get estimate \eqref{lemmm}.
	\end{proof}
	\begin{proof}[Proof of Theorem \ref{theo}]
		Let $ \mathcal{U}_0 \in \mathcal{H}$, $T> 0 $ and $ \mathcal{U}=(u,v,u_1,v_1),\; \mathcal{V} =(w, y,w_1 ,y_1) $ in $X_T$. We assume $\mathcal{U}$ and $\mathcal{V}$ in $B_{X_{T}}(0,R)$ with $R$ to be chosen large.	
		
		We have for all $t \in \left[ 0,T \right] $ 
		$$ \phi ( \mathcal{U}) (t)  - \phi ( \mathcal{V}  )(t) = \int _{0} ^{t} \mathcal{S}(t-s) \left(  \mathcal{F}(\mathcal{U}) - \mathcal{F}(\mathcal{V} ) \right) (s) ds. $$
		%	Since $ (\mathcal{S}(t))_{t\geq 0} $ is a semi group of contractions,
		Using $\eqref{phiUV}$ then
		\begin{eqnarray}
		\Vert \phi ( \mathcal{U} ) - \phi (\mathcal{V} ) \Vert _{X_{T}} & \lesssim & \Vert \mathcal{F}(\mathcal{U})-\mathcal{F}(\mathcal{V} ) \Vert _{( L_{T}^{1} L^{2})^4}. \nonumber 
		\end{eqnarray}
		Now, using Lemma \ref{lem1}, we conclude that there exists $C> 0$ such that 
		\begin{eqnarray}\label{2.30}
		\Vert \phi ( \mathcal{U}  ) - \phi ( \mathcal{V}  ) \Vert _{X_{T}} & \leq &  C  \Vert \mathcal{V}  - \mathcal{U} \Vert_{X_{T}} \left( T+T^\theta ( \Vert \mathcal{U}  \Vert _{ X_{T}} ^{p-1} + \Vert \mathcal{V}  \Vert _{ X_{T}} ^{p-1} ) \right)  , \nonumber \\
		\label{ineqPhidiff}& \leq &   C \Vert \mathcal{V}  - \mathcal{U} \Vert_{X_{T}} \left( T+T^\theta 2 R ^{p-1 }  \right)   . 
		\end{eqnarray}
		By Lemma \ref{lem2} and \eqref{lemm}, we obtain 
		\begin{eqnarray}
		\Vert \phi ( \mathcal{U}  ) \Vert _{X_{T}} & \leq &  C \Vert \mathcal{U}_0 \Vert_{\mathcal{H}} + CR\left( T+T^\theta 2 R ^{p-1 }  \right)   . 
		\end{eqnarray}
		We choose $R>2C  \Vert \mathcal{U}_0 \Vert_{\mathcal{H}} $ and $T$ such that 
		\begin{eqnarray}\label{temps}
		T+ 2 T^\theta R^{p-1}  < \frac{1}{2C}\; .
		\end{eqnarray}

		%$$ T< \left(  \dfrac{1}{2 C^{p-1} \Vert \mathcal{U}_0 \Vert_{\mathcal{H}}^{p-1} }  \right)  ^{\frac{1}{\theta}}, $$
		We deduce that $\phi: B_{X_{T}}(0,R) \subset X_T \longrightarrow B_{X_{T}}(0,R)$ and $\phi $ is a contraction. Then by the fixed point theorem there is a unique $\mathcal{U} \in B_{X_{T}}(0,R) \subset X_T$ such that $ \phi (\mathcal{U}) = \mathcal{U} $, where $ \mathcal{U}$ is a solution of system \eqref{system}, Which completes the proof of Theorem \ref{theo}.
		\subsubsection*{\textbf{Uniqueness}}
		We prove now the uniqueness of the solution. Let $\mathcal{U}$ and $\mathcal{V}$ be two solutions of \eqref{2} in $X_T$.  We note $M = \max \left( \Vert \mathcal{U} \Vert _{X_T} , \Vert \mathcal{V} \Vert _{X_T} \right)$. \\
		Using the Duhamel formula, we have $\mathcal{U} = \phi (\mathcal{U})$ et $\mathcal{V} = \phi (\mathcal{V})$. Moreover 
		%Let $\mathcal{U} $ given in  which is given a solution of problem \eqref{2}. Let $ \mathcal{V} \in \mathcal{C}(\left[ 0,T\right[, \mathcal{H} )$ an other solution of \eqref{2}.
		%The hypotheses assure that $ \mathcal{F}(\mathcal{V}) \in L_{T}^{1}(\mathcal{H}) $.\\
		%Let $ \mathcal{V}_{0} \in \mathcal{H}.$ Since  $\mathcal{F}(\mathcal{V}) \in L^{1}_{T}( \mathcal{H})$, then by Duhamel formula  the Cauchy problem  
		%\begin{eqnarray}
		%	\begin{cases}
		%	\mathcal{V}^{\prime}= \mathcal{A} \mathcal{V} + \mathcal{F}(\mathcal{V}),\\
		%	\mathcal{V}(0)=\mathcal{V}_{0},
		%\end{cases}\nonumber
		%\end{eqnarray} 
		%have a unique solution given by $ \mathcal{V} = \phi (\mathcal{V}) $.\\
		for all $ T_{0} \in \left[ 0, T \right] $, using \eqref{2.30} we have  
		\begin{eqnarray}
		\Vert \mathcal{U} - \mathcal{V} \Vert _{X_T} &= & \Vert \phi (\mathcal{U}) - \phi (\mathcal{V}) \Vert _{X_T},\nonumber \\
		& \leq   & C \Vert \mathcal{U} - \mathcal{V} \Vert _{X_T} \left( T_0 + T_{0}^\theta \left( \Vert \mathcal{U}  \Vert _{X_T}^{p-1} + \Vert \mathcal{V}  \Vert _{X_T}^{p-1} \right) \right) ,\nonumber \\
		& \leq & C \Vert \mathcal{U} - \mathcal{V} \Vert _{X_T} \left(  T_0+ T_{0}^\theta 2 M^{p-1} \right) .\nonumber
		\end{eqnarray} 
		If $ T_0 \leq T $ and $T_0 + T_{0}^\theta 2 M^{p-1} \leq \dfrac{1}{2C} $, then $ \Vert \mathcal{U} - \mathcal{V} \Vert _{X_T} \leq \frac{1}{2} \Vert \mathcal{U} - \mathcal{V} \Vert _{X_T} $. Thus, $\mathcal{U}= \mathcal{V}$ on $ \left[0 , T_{0} \right] .$ 
		We iterate the same process on $ \left[T_{0} ,2 T_{0} \right]$, and we obtain $\mathcal{U}= \mathcal{V}$ on $ \left[T_{0} ,2 T_{0}  \right] .$ \\
		By iteration we can show that $\mathcal{U}= \mathcal{V}$ on $ \left[0 , T  \right] .$ Finally, we deduce that  system \eqref{2} has a unique solution in $X_T$.
		\subsubsection*{\textbf{Strichartz estimate}}
		Let $ \mathcal{U}=(u,v,u^\prime , v^\prime) $ be the solution of system \eqref{2}. Then lemma \ref{lem1} assures that $ \mathcal{F}(\mathcal{U}) \in L^1 _T L^2 $. Furthermore, using the classical energy estimate \eqref{equiv} and \eqref{lemm}, we obtain 
		\begin{eqnarray}
		\Vert \mathcal{F}(\mathcal{U}) \Vert _{L_T ^1 L^2} & \lesssim &  T  \Vert \mathcal{U} \Vert _{X_T }  +  T ^\theta \Vert \mathcal{U} \Vert _{X_T } ^p , \nonumber \\
		& \lesssim & T  E_{u,v}(t)  +   T ^\theta E_{u,v}(t) ^p , \nonumber \\
		& \lesssim & T  E_0 + T ^\theta E_0 ^p , \nonumber \\
		%& \lesssim & T ^\theta E_0 ^{p-1} \;  E_0 , \nonumber \\
		& \leq & C ( T , E_0 ) . \nonumber
		\end{eqnarray} 
		Therefore, since we know now that equation \eqref{2} is satisfied, we get from Theorem \ref{th2.3} that for all $(q,r)$ satisfying \eqref{coupleadm} we have
		\begin{eqnarray}
		\Vert (u,v) \Vert _{ (L^q(\left[ 0,T \right] ,L^r(\Omega)))^2} &\leq & C \left(\Vert \mathcal{U}_0 \Vert_{\mathcal{H}}  + \Vert \mathcal{F}(\mathcal{U}) \Vert _{L_T ^1 L^2} \right), \nonumber \\
		&\leq &C ( T , E_0, \Vert \mathcal{U} _0\Vert _{\mathcal{H} } ) .
		\end{eqnarray}
		
		\subsubsection*{\textbf{Stability estimates}}
		To get the stability, we write again the Duhamel formula  for the difference of the solutions to get as in \eqref{2.30}
		\begin{eqnarray}
		\Vert  \mathcal{U}   - \mathcal{V}   \Vert _{X_{T}} & \leq &C \Vert \mathcal{U}_{0} -\mathcal{V}_{0} \Vert _{\mathcal{H}}+  C  \Vert \mathcal{V}  - \mathcal{U} \Vert_{X_{T}} \left( T+T^\theta ( \Vert \mathcal{U}  \Vert _{ X_{T}} ^{p-1} + \Vert \mathcal{V}  \Vert _{ X_{T}} ^{p-1} ) \right)  , \nonumber \\
		\label{ineqPhidiff}& \leq & C \Vert \mathcal{U}_{0} -\mathcal{V}_{0} \Vert _{\mathcal{H}}+  C \Vert \mathcal{V}  - \mathcal{U} \Vert_{X_{T}} \left( T+T^\theta 2 R ^{p-1 }  \right)   . 
		\end{eqnarray}
		It gives the expected estimate when $T$ is small enough.

	\end{proof}
	\subsection{Global existence}
	The proof of the global existence is mainly of the type:\\ " energy decay " + "explosion criterion" implies "global existence". The local wellposedness theory obtained in Theorem \ref{theo} classically implies the existence of a unique maximal solution with a prescribed initial condition. Hence, to get a global solution, we need the following lemma 
	\begin{lemma}{(explosion criterion)}
		\label{lmexplosioncrit}
		Let $\mathcal{U}_0 \in \mathcal{H}$.	We denote by $T^*$ the supremum of all $T > 0$ for which there exists a solution $ \mathcal{U} \in \mathcal{C}(\left[ 0,T^* \right[, \mathcal{H} )$	of \eqref{2}.
		%	Let $\mathcal{U}$ a maximal solution of problem \eqref{2}, we note $T^*$ her maximal times. 
		Then we have:
		\begin{eqnarray}
		T^*< + \infty \Longrightarrow \displaystyle \lim_{t \to T^*} \Vert \mathcal{U}(t)\Vert _{\mathcal{H}} = + \infty .
		\end{eqnarray}
	\end{lemma}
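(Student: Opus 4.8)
The plan is to argue by contraposition, exploiting the crucial feature of Theorem \ref{theo} that the local existence time depends only on an a priori bound for the $\mathcal{H}$-norm of the data, and not on the data itself. Concretely, I would assume that $T^* < +\infty$ but that the conclusion fails, i.e. that $\Vert \mathcal{U}(t) \Vert_{\mathcal{H}}$ does \emph{not} tend to $+\infty$ as $t \to T^*$. Then there exist a constant $M > 0$ and a sequence $t_n \nearrow T^*$ with $\Vert \mathcal{U}(t_n) \Vert_{\mathcal{H}} \leq M$ for all $n$, and I will derive a contradiction by extending the solution beyond $T^*$.

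First I would apply Theorem \ref{theo} with $R_0 = M$: this yields a uniform time $\tau = \tau(M) > 0$ such that, for \emph{any} datum of $\mathcal{H}$-norm at most $M$, the Cauchy problem \eqref{2} admits a unique solution on an interval of length $\tau$. Since $t_n \to T^*$, I can fix $n$ large enough that $T^* - t_n < \tau$. Taking $\mathcal{U}(t_n)$ as new initial datum --- which is licit since $\Vert \mathcal{U}(t_n) \Vert_{\mathcal{H}} \leq M$ --- and using the time-translation invariance of \eqref{2}, Theorem \ref{theo} produces a solution $\widetilde{\mathcal{U}}$ on the interval $[t_n, t_n + \tau]$ with $\widetilde{\mathcal{U}}(t_n) = \mathcal{U}(t_n)$, and crucially $t_n + \tau > T^*$.

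It then remains to glue $\mathcal{U}$ and $\widetilde{\mathcal{U}}$. On any interval $[0,T']$ with $T' < T^*$, the quantity $M' := \sup_{[0,T']} \Vert \mathcal{U}(t) \Vert_{\mathcal{H}}$ is finite by continuity, so applying the local theory with $R_0 = M'$ and covering $[0,T']$ by finitely many steps of length $\tau(M')$ shows that the maximal solution actually lies in the Strichartz space $X_{T'}$. By the uniqueness statement of Theorem \ref{theo} on the overlap, $\mathcal{U}$ and $\widetilde{\mathcal{U}}$ coincide on $[t_n, T^*[$, so that setting $\mathcal{U} := \widetilde{\mathcal{U}}$ on $[t_n, t_n + \tau]$ defines a solution of \eqref{2} on $[0, t_n + \tau]$ with $t_n + \tau > T^*$. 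This contradicts the definition of $T^*$ as the supremum of existence times, and the conclusion follows.

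The hard part --- really the only genuine point --- is to make the uniqueness-and-gluing step rigorous in the correct function space: one must verify that the maximal continuous-in-time solution belongs to $X_{T'}$ for every $T' < T^*$, so that the uniqueness of Theorem \ref{theo} is applicable on the overlapping interval and the two pieces are forced to agree. Everything else is a direct consequence of the uniformity of the local existence time in the size of the data, which is precisely what Theorem \ref{theo} provides.
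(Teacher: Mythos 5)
Your proposal follows the paper's proof almost step for step: argue by contradiction, extract a sequence $t_n \to T^*$ along which $\Vert \mathcal{U}(t_n)\Vert_{\mathcal{H}} \leq M$, use the fact that the local existence time in Theorem \ref{theo} depends only on the bound $M$ (the paper makes this explicit by taking $T_0=\min(D,D^{1/\theta})$ with $D=\frac{1}{C_1(1+2M^{p-1})}$), restart the equation from $\mathcal{U}(t_N)$ with $t_N+T_0>T^*$, and glue to contradict the definition of $T^*$. So the skeleton is the same and it does yield the lemma.

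The one point where you diverge is the gluing, and there what you call ``the hard part'' is both unnecessary and, as you sketch it, circular. The paper glues by direct concatenation: $\mathcal{W}=\mathcal{U}$ on $[0,t_N[$ and $\mathcal{W}(t)=\mathcal{V}(t-t_N)$ afterwards; that $\mathcal{W}$ solves \eqref{2} follows from inserting the Duhamel formula for $\mathcal{U}(t_N)$ into the Duhamel formula for $\mathcal{V}$ and using the semigroup property of $\mathcal{S}(\cdot)$. No identification of the two solutions on an overlap is needed, because \emph{any} solution defined past $T^*$ already contradicts the supremum defining $T^*$. Your alternative route --- first prove $\mathcal{U}\in X_{T'}$ for every $T'<T^*$, then invoke the $X_T$-uniqueness of Theorem \ref{theo} on the overlap --- is a correct statement, but your justification of the first step (``cover $[0,T']$ by finitely many steps of length $\tau(M')$'') is circular: to identify the $X$-solution constructed on each step with the merely continuous solution $\mathcal{U}$, you need uniqueness in a class containing $\mathcal{U}$, which is exactly what you are trying to establish. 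The non-circular way to get $\mathcal{U}\in X_{T'}$ is to regard $(u,v)$ as a solution of the linear system \eqref{termesource} with source $\mathcal{F}(\mathcal{U})\in L^1([0,T'],L^2(\Omega))$ (integrability that is implicit in the Duhamel formulation) and apply the linear Strichartz estimate of Theorem \ref{th2.3}; this is how the paper itself derives the Strichartz bound for its solutions. Either fix --- dropping the identification altogether, as the paper does, or replacing the covering argument by Theorem \ref{th2.3} --- makes your argument complete.
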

	
	\begin{proof}
		Let $ \mathcal{U} \in \mathcal{C}(\left[ 0,T^* \right[, \mathcal{H} )$ be a maximal solution of system \eqref{2} such that $ T^*< \infty $. We argue by contradiction and assume that $\Vert \mathcal{U}(t)\Vert _{\mathcal{H}} $ does not converge to $+\infty$ as $t\to T^{*}$. In particular, there exists $M>0$ and a sequence of time $\left( t_n \right)_{n \in \mathbb{N}} \subset \left[ 0,T^* \right[$, converging to $ T^* $ such that for all $n \in \mathbb{N}$,  we have 
		\begin{eqnarray}\label{exp}
		\Vert \mathcal{U}(t_n) \Vert _{\mathcal{H}} \leq M .
		\end{eqnarray}
		%Let $(t_n)_{n\in \mathbb{N}}$ be a sequence of time converging to $ T^* $  and  
		Let $ T_{0}=\min(D,D^{1/\theta})$ with $D=\frac{1}{C_1(1+2M^{p-1})}$, $C_1 > C $ and $C$ be the constant given in \eqref{2.30}, which is the condition that ensures the well posed in time $T$ in Theorem \ref{theo}. We choose $t_N$, for some $N$ in $\mathbb{N}$, such that $t_N +T_0 > T^* $. \\
		
		We will study the following problem :
		\begin{eqnarray}\label{newsolution}
		\begin{cases}
		\partial _t	\mathcal{V} (t)= \mathcal{A} \mathcal{V} (t) + \mathcal{F} (\mathcal{V}), \\
		\mathcal{V}(0)=\mathcal{U}(t_N),
		\end{cases}
		\end{eqnarray}
		where $ \mathcal{V}(t) = \mathcal{U}(t +t_N) $. 
		
		With the previous choice of $T_{0}$ and $M$, Theorem \ref{theo} ( precisely (2.34) in the proof of Theorem 2.4) allows to build a solution of system \eqref{newsolution} on a time interval of length $ T_{0}$ because 
		$$ T_0 + 2 T_0 ^ \theta \Vert \mathcal{U}(t_{0} )\Vert _{\mathcal{H}} ^{p-1} \leq D+2 D\Vert \mathcal{U}(t_{0} ) \Vert _{\mathcal{H}} ^{p-1} = \dfrac{1}{C_1}  < \dfrac{1}{C}. $$
		
		%$$ \dfrac{C}{\Vert \mathcal{U}(t_{0} ) \Vert _{\mathcal{H}}} =\dfrac{C}{\Vert \mathcal{V}_{0} \Vert _{\mathcal{H}}} \geq \dfrac{C}{M^{p-1}}  .$$
		We consider the function $\mathcal{W}$ defined by 
		$$ \mathcal{W} (t)= \begin{cases}
		\mathcal{U} (t) \quad \text{on} \left[ 0, t_{N} \right[ , \\
		\mathcal{V}(t-t_{N}) \quad \text{on} \left[ t_N, t_N +t_0\right[ .
		\end{cases}
		$$
		$\mathcal{W}$ is well defined and indeed constitutes a solution of system \eqref{2} on $ \left[ 0, t_N +t_0\right[ $, which contradicts the maximality of $ \mathcal{U} $. Then \eqref{exp} is false and we have 
		$$ \Vert \mathcal{U}(t_n) \Vert _{\mathcal{H}} \displaystyle \xrightarrow [n\to +\infty]{} +\infty .$$
	\end{proof}
	
	\begin{corollary}
		Let $\mathcal{U} _{0} \in \mathcal{H} $. Then, the system \eqref{2} has a unique solution $\mathcal{U}$ on $[0,+\infty)$. Moreover, for any  $T>0$, we have 
		$$\mathcal{U} \in \mathcal{C}([0,T],\mathcal{H}) \cap \left(  \left( L^{q} _T L^{r} \right) ^2 \times \mathcal{C} \left( [0,T],L ^{2}( \Omega) \right) ^{2} \right) \; \text{ for } (q,r)= \left( \dfrac{2p}{p-3},2p \right).$$ 
	\end{corollary}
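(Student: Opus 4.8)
The plan is to upgrade the local solution of Theorem \ref{theo} into a global one by ruling out finite-time blow-up, using the explosion criterion together with an a priori bound coming from the dissipativity of the system. First I would invoke Theorem \ref{theo}: for any $\mathcal{U}_0\in\mathcal{H}$ it produces a unique local solution, and the usual gluing argument (extend the solution as long as possible, using local uniqueness to patch) yields a unique \emph{maximal} solution $\mathcal{U}\in\mathcal{C}([0,T^*),\mathcal{H})$ of \eqref{2}, with $T^*\in(0,+\infty]$, to which Lemma \ref{lmexplosioncrit} applies. It then suffices to prove that $\Vert\mathcal{U}(t)\Vert_{\mathcal{H}}$ stays bounded on $[0,T^*)$: if so, then $T^*<\infty$ would contradict the blow-up alternative of Lemma \ref{lmexplosioncrit}, forcing $T^*=+\infty$.

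The a priori bound is supplied by the energy dissipation. For the solution $(u,v)$ the energy satisfies $E'_{u,v}(t)=-\int_\Omega a(x)\,\vert\partial_t u(x,t)\vert^2\,dx\le 0$, as recorded in the introduction, whence $E_{u,v}(t)\le E_{u,v}(0)=:E_0$ for all $t\in[0,T^*)$. Because the energy never exceeds its initial value $E_0$, the equivalence \eqref{equiv} applies uniformly in time and gives $\tfrac1C\Vert\mathcal{U}(t)\Vert_{\mathcal{H}}^2\le E_{u,v}(t)\le E_0$, i.e. $\Vert\mathcal{U}(t)\Vert_{\mathcal{H}}\le\sqrt{C E_0}$ for every $t\in[0,T^*)$. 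This uniform bound closes the argument and proves existence and uniqueness of a global solution on $[0,+\infty)$; global uniqueness follows from the local uniqueness in $X_T$ of Theorem \ref{theo}, since two global solutions must coincide on each $[0,T]$ by iterating the local uniqueness statement.

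For the finite-interval regularity, fix $T>0$. Since $\Vert\mathcal{U}(t)\Vert_{\mathcal{H}}\le\sqrt{C E_0}$ uniformly, the local existence time $T_0=T_0(\sqrt{C E_0})$ furnished by Theorem \ref{theo} can be taken the same all along the trajectory. I would then partition $[0,T]$ into $N=\lceil T/T_0\rceil$ subintervals $[t_k,t_{k+1}]$ of length at most $T_0$ and, viewing $\mathcal{U}(t_k)$ as initial data on each, apply Theorem \ref{theo} to obtain $\mathcal{U}\in\mathcal{C}([t_k,t_{k+1}],\mathcal{H})$ with first two components in $L^q L^r$, last two in $\mathcal{C}L^2$, and the Strichartz bound $\Vert(u,v)\Vert_{(L^q([t_k,t_{k+1}],L^r))^2}\le C(T_0,E_0)$ for $(q,r)=(2p/(p-3),2p)$. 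Summing the $q$-th powers over the finitely many subintervals gives $\Vert(u,v)\Vert_{(L^q([0,T],L^r))^2}\le N^{1/q}\,C(T_0,E_0)=C(T,E_0)<\infty$, while the continuity statements glue across the endpoints $t_k$ by uniqueness. This establishes the stated membership on every finite $[0,T]$.

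The main obstacle I anticipate is not the globalization scheme itself but the rigorous justification of the energy identity $E'_{u,v}(t)=-\int_\Omega a\,\vert\partial_t u\vert^2\,dx$ for the solutions at hand, which are mild (Duhamel) solutions in the energy space rather than classical ones. The clean way is to establish the identity first for regular data $\mathcal{U}_0\in\mathcal{D}(\mathcal{A})$, where $t\mapsto E_{u,v}(t)$ is differentiable and the computation is direct, and then to propagate the resulting inequality $E_{u,v}(t)\le E_{u,v}(0)$ to general $\mathcal{U}_0\in\mathcal{H}$ by density, using the Lipschitz dependence of the solution on its initial data proved in Theorem \ref{theo} to pass to the limit in the $\mathcal{H}$-norm. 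Once this monotonicity is secured, the remaining steps are routine.
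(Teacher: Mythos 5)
Your proposal is correct and follows essentially the same route as the paper: a uniform a priori bound $\Vert\mathcal{U}(t)\Vert_{\mathcal{H}}\leq C$ from the energy decay combined with the equivalence \eqref{equiv}, and then the explosion criterion of Lemma \ref{lmexplosioncrit} to force $T^*=+\infty$. The extra details you supply (the partition argument for the finite-interval Strichartz bound and the density justification of the energy identity for mild solutions) are sensible elaborations of steps the paper leaves implicit, not a different method.
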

	\begin{proof}
		As before, let $T^{*}\in (0,+\infty]$ be the maximal time existence. Using the fact that the energy is a decreasing function in time and estimate \eqref{equiv}, for all $t\in [0,T^{*})$  we get $\Vert \mathcal{U}(t) \Vert _{\mathcal{H}}\leq C E(\mathcal{U}(t))\leq E(\mathcal{U}_{0}) $. In particular, the explosion criteria of Lemma \ref{lmexplosioncrit} gives $T^* = + \infty $ and then the unique solution given by Theorem \ref{theo} is a global solution.
	\end{proof}
	
	\section{Stabilization}
	\label{s:stab}
	This section is addressed to prove the uniform stability given in Theorem \ref{th1}. Due to \cite[Proposition 2.5]{2}, this proof follows from the well-known criterion for exponential decay. Our estimate is the following.
	
	\begin{theorem} \label{th3.6}
		Let	$f_i \in \mathcal{C}^1(\mathbb{R},\mathbb{R})$, for $i=1,2$, satisfy \eqref{nonlinearity} and let $E_0 > 0$. We assume that $f_i$ is analytic, $ \omega_{b} $ satisfies the geometric control condition and $ supp(b) \subset \omega_{a} $, then there exist $T>0 $ and $C>0$ such that for   $ (u,v) $ solution of \eqref{system} where
		$ E_{u,v} (0) \leq E_{0} $, satisfies 
		\begin{eqnarray}\label{IO}
		E_{u,v} (0) \leq C \int _{0} ^{T} \int _{\Omega } a(x) \vert \partial _t u (x,t) \vert ^2 dx dt .
		\end{eqnarray}
		
	\end{theorem}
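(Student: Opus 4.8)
The plan is to prove the observability estimate \eqref{IO} by the standard contradiction/compactness argument adapted to the coupled semilinear system. Suppose \eqref{IO} fails. Then there exists a sequence of solutions $(u_n,v_n)$ of \eqref{system} with $E_{u_n,v_n}(0)\leq E_0$ such that
\begin{eqnarray}\label{contradhyp}
\int_0^T\int_\Omega a(x)\,|\partial_t u_n(x,t)|^2\,dx\,dt \leq \frac{1}{n}\,E_{u_n,v_n}(0).
\end{eqnarray}
Writing $\alpha_n^2 := E_{u_n,v_n}(0)$, we normalize and split into two regimes according to whether $\alpha_n\to 0$ or $\alpha_n$ stays bounded below. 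The core idea, following the scheme of Dehman-Lebeau-Zuazua and Joly-Laurent \cite{dehman2003stabilization,2}, is that the right-hand side of \eqref{contradhyp} controls the damping, so in the limit one produces a nonzero solution of a coupled wave system on which the damping $a\partial_t u$ vanishes; combined with the geometric control condition and a unique continuation argument this forces the limit to be zero, yielding the contradiction.

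\textbf{The linearizable (small-data) regime.} If $\alpha_n\to 0$, set $(y_n,z_n)=(u_n,v_n)/\alpha_n$, so that $E(y_n,z_n,\partial_t y_n,\partial_t z_n)$ is normalized to order one by \eqref{equiv}. The nonlinear terms $f_i(u_n)/\alpha_n = f_i(\alpha_n y_n)/\alpha_n$ tend to $f_i'(0)\,\cdot$ (linear) and, because $p>1$ and by the subcriticality together with the Strichartz bound \eqref{strichartz}, the error is $o(1)$ in the relevant norm; here I would invoke a linearization estimate showing $\|f_i(\alpha_n y_n)/\alpha_n\|_{L^1_TL^2}\to 0$ after subtracting the linear part. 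Thus $(y_n,z_n)$ is, up to a vanishing source, a solution of the \emph{linear} coupled system \eqref{termesource}, and \eqref{contradhyp} forces $\int_0^T\int_\Omega a|\partial_t y_n|^2\,dx\,dt\to 0$. The linear observability result of Ayechi-Khenissi \cite{radhia2022local}, valid since $\mathrm{supp}(b)\subset\omega_a$ and $\omega_b$ satisfies GCC, then gives $E(y_n,z_n)(0)\to 0$, contradicting the normalization.

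\textbf{The nonlinear (bounded-below) regime.} If instead $\alpha_n \geq c > 0$, one keeps $(u_n,v_n)$ itself. The energy bound gives weak convergence in $\mathcal{H}$, but the nonlinearity requires \emph{strong} convergence to pass to the limit; this is supplied by a propagation-of-compactness argument. I would first microlocalize: from \eqref{contradhyp} the sequence $\partial_t u_n \to 0$ strongly in $L^2$ on the damping region $\{a\geq a_0\}\supset\omega_a$; using the defect measure / propagation of compactness for the wave operator (as in \cite{dehman2003stabilization,2}) along the bicharacteristic flow, together with GCC for $\omega_b$, one upgrades this to strong convergence of $(u_n,v_n)$ in the energy space on $[0,T]$. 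The limit $(u,v)$ then solves \eqref{system} with $a\partial_t u \equiv 0$, i.e. $\partial_t u=0$ on $\omega_a$, and both equations hold exactly. The \textbf{main obstacle}, and the crux of the argument, is the \emph{unique continuation} step: one must show that a solution of the coupled \emph{semilinear} system that is stationary (in $t$) on $\omega_a$ must vanish identically. Here the analyticity hypothesis on $f_1,f_2$ is essential — it is exactly the setting of Joly-Laurent \cite{2}, whose unique continuation theorem under the geometric control condition applies. One propagates the vanishing of $\partial_t u$ through the coupling term $b\partial_t u$ in the second equation (using $\omega_b\subset\omega_a$ and $b\geq b_0$ on $\omega_b$) to deduce $\partial_t v=0$ on $\omega_b$ as well, and then the analytic unique continuation forces $(u,v)\equiv 0$. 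This contradicts $E_{u,v}(0)=\lim\alpha_n^2\geq c^2>0$, completing the proof.

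I expect the compactness/unique-continuation step for the \emph{coupled} system to be the genuine difficulty: the damping only acts directly on $u$, so the compactness of the $v$-component and the final unique continuation must be transported across the coupling $b(x)\partial_t u$, which is where the geometric assumption $\omega_b\subset\omega_a$ with GCC and the analyticity of the nonlinearities are used in tandem.
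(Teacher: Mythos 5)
Your overall contradiction scheme and your small-data regime do match the paper (the paper's Case $\alpha=0$ concludes via exponential decay of a modified semigroup $\tilde{\mathcal{S}}$ evaluated at times $T_n\to\infty$ rather than via a perturbed linear observability inequality, but these are equivalent in spirit). The genuine gap is in the nonlinear regime, precisely in the unique continuation step that you correctly identify as the crux — but whose mechanism you describe backwards. Once $\partial_t u = 0$ on $\omega_a \supset \mathrm{supp}(b)$, the coupling term $b\,\partial_t u$ in the \emph{second} equation vanishes identically, so the $v$-equation reduces to an unobserved semilinear wave equation and yields no information whatsoever about $\partial_t v$; nothing can be "propagated through" it. In the paper (Proposition \ref{L3}) the information flows the other way, through the term $b\,\partial_t v$ in the \emph{first} equation: since $u$ is time-independent on $\omega_a$, differentiating the first equation in time gives $\partial_{tt}(bv)=0$ on $\omega_a$, hence $bv = g(x)t + h(x)$ there; the uniform-in-time energy bound then forces $g\equiv 0$ (otherwise $\Vert bv(t)\Vert_{L^2}$ grows linearly in $t$), so $b\,\partial_t v \equiv 0$ on $\Omega$ and the system decouples into two scalar equations with $\partial_t u=0$ on $\omega_a$ and $\partial_t v=0$ on $\omega_b$, to which the scalar unique continuation of \cite{2} applies. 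Note that this argument requires the limit solution to be defined on an \emph{unbounded} time interval.

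This points to the second gap: your contradiction setup fixes $T$, whereas the negation of the statement lets you take $T_n\to\infty$, and the paper uses this crucially twice. Its compactness tool is not microlocal propagation but the asymptotic compactness of Proposition \ref{L2}: one evaluates the sequence at times $T_n/2\to\infty$, where the free part $\mathcal{S}(t_n)\mathcal{U}_n(0)$ dies by the linear exponential decay of Theorem \ref{th2} (\cite{radhia2022local}) and the Duhamel term is compact by subcritical regularization (Lemma \ref{L5}); the resulting limit is then a \emph{global} solution, as needed for the $g\equiv 0$ argument above. Your alternative — defect-measure propagation for the coupled system on a fixed window — is plausible but not off-the-shelf: the coupling term $b\,\partial_t(v_n - v)$ in the $u$-equation is not a priori compact, so the scalar propagation results of \cite{dehman2003stabilization,2} do not apply directly (one must first kill the measure of $u_n-u$ over $\omega_a$ and use flow-invariance only outside $\mathrm{supp}(b)$, and only afterwards transfer compactness to $v_n$ through the second equation). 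Even granting that, the limit you produce lives only on $[0,T]$, where the decoupling argument fails: on a finite interval, $bv = g(x)t + h(x)$ is perfectly consistent with bounded energy. So as written the nonlinear regime does not close; to repair it you should take $T_n\to\infty$ and produce a global limit solution, which is exactly what the paper's asymptotic-compactness route is designed to do.
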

	In order to prove the previous result, we will strongly use the exponential decay for the linear semigroup.% in the case where $supp(b) \subset \omega_{a} $ and $\omega_{b } $ checks the GCC.
	%For proof the estimate \eqref{Expd} we need the following lemma:
	\subsection{Exponential decay of linear semigroup}
	In this paper, we will use the exponential decay for the linear semigroup $(\mathcal{S}(t))_{t\geq 0 } $ when the coupling region satisfies the geometric control condition and is contained in the damping region given in \cite{radhia2022local} (see also \cite{cui2020observability}).
	\begin{theorem}\label{th2}
		We assume that $ \omega_{b} $ satisfies the geometric control condition and \\ $ supp(b) \subset \omega_{a} $. Then, there exists $C>0$  so that for all initial data $\mathcal{U}_{0}  \in \mathcal{H} $ we have
		\begin{eqnarray}
		\Vert \mathcal{S}(t) \mathcal{U}_0 \Vert _{\mathcal{H}} \leq C e^{-\beta t} \Vert \mathcal{U}_0 \Vert_{\mathcal{H}}, 
		\end{eqnarray}
		for all $ t \geq 0 $, where $ \beta $ is a positive constant.
	\end{theorem}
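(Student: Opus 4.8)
The plan is to characterize exponential stability through the frequency domain. Since $(\mathcal{S}(t))_{t\geq 0}$ is a contraction semigroup generated by the maximal dissipative operator $\mathcal{A}$ on the Hilbert space $\mathcal{H}$, the theorem of Gearhart, Huang and Pr\"uss states that the claimed decay is equivalent to the two conditions
\[
i\mathbb{R}\subset\rho(\mathcal{A}) \qquad\text{and}\qquad \sup_{\lambda\in\mathbb{R}}\left\Vert (i\lambda-\mathcal{A})^{-1}\right\Vert_{\mathcal{L}(\mathcal{H})}<+\infty .
\]
As $\Omega$ is bounded, $\mathcal{A}$ has compact resolvent, so the first condition reduces to the absence of eigenvalues on the imaginary axis, and essentially all the work lies in the uniform resolvent estimate.

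First I would exclude purely imaginary eigenvalues. If $\mathcal{A}U=i\lambda U$ with $U=(u,v,u',v')\neq 0$, the dissipation identity $\mathrm{Re}\langle\mathcal{A}U,U\rangle=-\int_{\Omega}a\vert u'\vert^{2}$ yields $\lambda^{2}\int_{\Omega}a\vert u\vert^{2}=0$. For $\lambda=0$ the Dirichlet problem forces $U=0$; for $\lambda\neq 0$ one gets $u\equiv 0$ on $\{a>0\}\supseteq\omega_{a}\supseteq\omega_{b}$. Rewriting the eigenproblem as the elliptic system
\[
\Delta u+\lambda^{2}u=i\lambda(au+bv),\qquad \Delta v+\lambda^{2}v=-i\lambda bu,
\]
the vanishing of $u$ on $\omega_{b}$ together with $b\geq b_{0}$ there forces $v\equiv 0$ on $\omega_{b}$ as well. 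This system has diagonal (scalar Laplacian) principal part with bounded lower order coupling, so the scalar Carleman estimate applies componentwise and gives unique continuation: $(u,v)\equiv 0$ on the connected set $\Omega$, contradicting $U\neq 0$. Combined with the Fredholm alternative (compact resolvent and injectivity give surjectivity), this proves $i\mathbb{R}\subset\rho(\mathcal{A})$.

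For the uniform resolvent bound I would argue by contradiction, assuming the existence of $\lambda_{n}\in\mathbb{R}$ and $U_{n}\in\mathcal{D}(\mathcal{A})$ with $\Vert U_{n}\Vert_{\mathcal{H}}=1$ and $F_{n}:=(i\lambda_{n}-\mathcal{A})U_{n}\to 0$ in $\mathcal{H}$. The dissipation identity gives $\int_{\Omega}a\vert u_{n}'\vert^{2}=\mathrm{Re}\langle F_{n},U_{n}\rangle\to 0$, so $\sqrt{a}\,u_{n}'\to 0$ in $L^{2}(\Omega)$. If $(\lambda_{n})$ is bounded, compactness of the resolvent yields a subsequence converging to a nonzero imaginary eigenvector, contradicting the previous step. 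The main case is $\lambda_{n}\to\infty$, which I would treat with semiclassical analysis: setting $h_{n}=1/\lambda_{n}\to 0$ and eliminating the velocities, $u_{n}$ and $v_{n}$ solve the semiclassical equations $(-h_{n}^{2}\Delta-1)u_{n}+ih_{n}(au_{n}+bv_{n})\to 0$ and $(-h_{n}^{2}\Delta-1)v_{n}-ih_{n}bu_{n}\to 0$. The velocity coupling then produces, for the joint semiclassical defect measures of $(u_{n},v_{n})$, a coupled system of transport equations along the bicharacteristic (generalized geodesic) flow, namely the ODE system along the flow mentioned in Remark \ref{rklienODE}. The damping forces the measure of $u_{n}$ to vanish on $\{a>0\}$; the coupling through $\{b\geq b_{0}\}$ transmits this to the measure of $v_{n}$ on $\omega_{b}$; and two applications of propagation together with the geometric control condition on $\omega_{b}\subset\omega_{a}$ force all the measures to vanish identically. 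Hence $U_{n}\to 0$ strongly, contradicting $\Vert U_{n}\Vert_{\mathcal{H}}=1$.

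The main obstacle is this high-frequency analysis. Two points are delicate: first, the rigorous propagation of the defect measures up to the boundary $\Gamma$, which requires the microlocal machinery of Bardos-Lebeau-Rauch and Lebeau for generalized geodesics with reflections and gliding rays; and second, and more importantly, the coupling step showing that the velocity coupling across $\{b\geq b_{0}\}$ transfers the vanishing of the measure of $u_{n}$ to that of $v_{n}$. It is precisely here that the hypotheses $\mathrm{supp}(b)\subset\omega_{a}$ and the geometric control condition on $\omega_{b}$ are used together, and this transfer is exactly what makes the indirect stabilization mechanism work.
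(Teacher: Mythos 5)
The first thing to note is that the paper does not prove this theorem at all: Theorem \ref{th2} is imported from the linear result of Ayechi--Khenissi \cite{radhia2022local} (with \cite{cui2020observability} cited as related), so there is no in-paper argument to compare against; your proposal must be judged against the standard proof in that literature, which is indeed the frequency-domain route you take. Your reduction via Gearhart--Huang--Pr\"uss is correct, and your exclusion of imaginary eigenvalues is sound: the dissipation identity gives $u\equiv 0$ on $\omega_a$, restricting the first equation to the open set $\omega_a$ (where then $\Delta u=0$) gives $bv=0$ and hence $v\equiv 0$ on $\omega_b$, and componentwise Carleman estimates absorb the bounded zeroth-order coupling to yield $(u,v)\equiv(0,0)$ by unique continuation. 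The bounded-$\lambda_n$ case via compactness of the resolvent is also fine.

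The gap is in the only step that actually carries the content of the theorem: the high-frequency resolvent bound. You assert that ``the coupling through $\{b\geq b_{0}\}$ transmits'' the vanishing of the measure of $u_n$ to that of $v_n$ and that ``two applications of propagation'' conclude, but for a coupled system the scalar semiclassical measures $\mu_u,\mu_v$ do not satisfy closed transport equations: one must work with the matrix-valued (joint) measure, including the off-diagonal correlation $\mu_{uv}$. The actual transfer mechanism runs through that off-diagonal entry: $\mu_u=0$ on $\omega_a$ forces $\mu_{uv}=0$ there by Cauchy--Schwarz domination of the matrix measure, and then the transport equation for $\mu_{uv}$, whose source term on that set reduces to a multiple of $b\,\mu_v$, forces $b\,\mu_v=0$, i.e. $\mu_v=0$ on $\omega_b$; only then does linear uniqueness for the ODE system of Remark \ref{rklienODE} propagate total vanishing along each ray, and the geometric control condition finishes. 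Moreover, all of this must be justified up to the boundary along the Melrose--Sj\"ostrand generalized bicharacteristic flow with Dirichlet conditions --- this is exactly what \cite{cui2020observability} carries out in the boundaryless case and what \cite{radhia2022local} supplies with boundary. You name both difficulties honestly, but you execute neither, so the proposal is a correct strategy and a faithful reconstruction of the expected argument rather than a proof; what is missing is precisely the content the paper outsources to \cite{radhia2022local}.
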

	For $\epsilon \in \left[ 0, 1 \right] $, we note $ \mathcal{H} ^\epsilon = \mathcal{D}\left( \left( -\Delta_{D} \right) ^{\frac{1+\epsilon}{2}}\right) ^{2}\times \mathcal{D}\left( \left( -\Delta_{D}\right) ^{\frac{\epsilon}{2}}\right) ^{2} $ where $\Delta_{D}$ is the Dirichlet Laplacian. We notice that for $\epsilon \in \left[ 0, 1 \right]\setminus \{1/2\} $, which will always be the case in later use, we have $ \mathcal{H} ^\epsilon=\left( H^{1+ \epsilon }(\Omega) \cap H^1_0 (\Omega) \right) ^2 \times  \left(H^\epsilon _0 (\Omega) \right) ^2 $\footnote{We denote by $H^s_0$ the completion of $C^{\infty}_{c}(\Omega)$ for the norm of $H^{s}(\Omega)$. $H^s_0=H^{s}(\Omega)$ is the usual Sobolev space for $s\in [0,1/2)$ and $H^s_0= \left\lbrace u \in H^s , \;u =0 \text{ on } \Gamma \right\rbrace $ for $s\in (1/2,1]$.}, see for instance \cite{G:67}. Noticing that  $\mathcal{D}((-\Delta_{D})^{\theta})=[\mathcal{D}(-\Delta_{D}),L^{2}(\Omega)]_{\theta}$ for $\theta\in [0,1]$, that is are interpolation spaces.
	
	\begin{corollary}\label{cor1}
		We assume that $ \omega_{b} $ satisfies the geometric control condition and \\ $ supp(b) \subset \omega_{a} $. For all initial data $\mathcal{U}_{0}  \in \mathcal{H}_{\epsilon} $ we have
		\begin{eqnarray}
		\forall t \geq 0, \; \Vert \mathcal{S}(t) \mathcal{U}_0 \Vert _{\mathcal{H}^\epsilon} \leq C e^{-\beta t} \Vert \mathcal{U}_0 \Vert_{\mathcal{H}^\epsilon}, 
		\end{eqnarray}
		where $ \beta $ is a positive constant,  and $\epsilon \in \left[ 0, 1 \right]$.
	\end{corollary}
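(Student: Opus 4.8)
The plan is to establish the estimate first at the two endpoints $\epsilon=0$ and $\epsilon=1$, and then to recover the whole range $\epsilon\in[0,1]$ by interpolation, using that the spaces $\mathcal{H}^\epsilon$ are exactly the complex interpolation spaces between $\mathcal{H}^0=\mathcal{H}$ and $\mathcal{H}^1$. The case $\epsilon=0$ is precisely Theorem \ref{th2}, so the only genuinely new input is the endpoint $\epsilon=1$.

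For $\epsilon=1$ I would exploit that $\mathcal{H}^1$ coincides with $\mathcal{D}(\mathcal{A})$ and that a generator commutes with its semigroup: for every $\mathcal{U}_0\in\mathcal{D}(\mathcal{A})$ and $t\geq0$ one has $\mathcal{A}\mathcal{S}(t)\mathcal{U}_0=\mathcal{S}(t)\mathcal{A}\mathcal{U}_0$. First I would check that the graph norm $\|\mathcal{U}\|_{\mathcal{H}}+\|\mathcal{A}\mathcal{U}\|_{\mathcal{H}}$ is equivalent to the $\mathcal{H}^1$ norm; the nontrivial direction follows from elliptic regularity for the Dirichlet Laplacian, namely $\|u\|_{H^2}\lesssim\|\Delta u\|_{L^2}$ for $u\in H^2\cap H^1_0$, after absorbing the lower order terms $a\partial_t u$, $b\partial_t v$ and $b\partial_t u$, which are controlled in $L^2$ since $a,b\in L^\infty$ and $\partial_t u,\partial_t v\in H^1_0$. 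Granting this equivalence, I would apply Theorem \ref{th2} twice, once to $\mathcal{U}_0$ and once to $\mathcal{A}\mathcal{U}_0\in\mathcal{H}$, to obtain
\begin{align*}
\|\mathcal{S}(t)\mathcal{U}_0\|_{\mathcal{H}^1}
&\lesssim \|\mathcal{S}(t)\mathcal{U}_0\|_{\mathcal{H}}+\|\mathcal{A}\mathcal{S}(t)\mathcal{U}_0\|_{\mathcal{H}} \\
&= \|\mathcal{S}(t)\mathcal{U}_0\|_{\mathcal{H}}+\|\mathcal{S}(t)\mathcal{A}\mathcal{U}_0\|_{\mathcal{H}} \\
&\lesssim e^{-\beta t}\big(\|\mathcal{U}_0\|_{\mathcal{H}}+\|\mathcal{A}\mathcal{U}_0\|_{\mathcal{H}}\big)
\lesssim e^{-\beta t}\|\mathcal{U}_0\|_{\mathcal{H}^1},
\end{align*}
which is the sought decay at $\epsilon=1$, with the \emph{same} exponent $\beta$.

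For the intermediate values I would then invoke interpolation of linear operators. The remark preceding the statement gives $\mathcal{H}^\epsilon=[\mathcal{H}^0,\mathcal{H}^1]_\epsilon$: interpolation distributes over the product, and $[\mathcal{D}((-\Delta_D)^{1/2}),\mathcal{D}(-\Delta_D)]_\epsilon=\mathcal{D}((-\Delta_D)^{(1+\epsilon)/2})$ together with $[L^2,\mathcal{D}((-\Delta_D)^{1/2})]_\epsilon=\mathcal{D}((-\Delta_D)^{\epsilon/2})$ reproduce exactly the two factors of $\mathcal{H}^\epsilon$. Since $\mathcal{S}(t)$ maps $\mathcal{H}^0$ into itself and $\mathcal{H}^1$ into itself with operator norms both bounded by $Ce^{-\beta t}$, the interpolation inequality $\|\mathcal{S}(t)\|_{\mathcal{H}^\epsilon\to\mathcal{H}^\epsilon}\leq \|\mathcal{S}(t)\|_{\mathcal{H}^0\to\mathcal{H}^0}^{1-\epsilon}\,\|\mathcal{S}(t)\|_{\mathcal{H}^1\to\mathcal{H}^1}^{\epsilon}\leq Ce^{-\beta t}$ delivers the claim for every $\epsilon\in[0,1]$, with unchanged constants.

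I expect the main obstacle to be the $\epsilon=1$ endpoint, and specifically the verification that the graph norm of $\mathcal{A}$ is equivalent to the $\mathcal{H}^1$ norm; once the commutation identity and this equivalence are in place, the rest is routine. The interpolation step is clean precisely because the rate $\beta$ is common to both endpoints, so no loss in the exponential rate occurs and the constant $C$ is simply inherited.
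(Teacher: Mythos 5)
Your proposal is correct and follows essentially the same route as the paper: the endpoint $\epsilon=1$ via the equivalence $\mathcal{H}^1=\mathcal{D}(\mathcal{A})$ (with graph norm, using $a,b\in L^\infty$) together with the commutation $\mathcal{A}\mathcal{S}(t)=\mathcal{S}(t)\mathcal{A}$ on $\mathcal{D}(\mathcal{A})$, then interpolation to cover $\epsilon\in[0,1]$. You have merely filled in the details (elliptic regularity for the norm equivalence, the operator-norm interpolation inequality) that the paper leaves implicit, so nothing further is needed.
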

	The corollary can be proved for $\epsilon=1$ by noticing that $ \mathcal{H}^{1}= \mathcal{D}(\mathcal{A})$ with equivalent norm as long as $a$ and $b$ are in $L^{\infty}$ and the fact that $\mathcal{A}$ commutes with $\mathcal{S}(t)$. An interpolation argument allows us to conclude for $\epsilon\in[0,1]$. We refer for instance to \cite[Proposition 2.3]{2} for a similar proof in the scalar case. 
	
	\begin{remark}
		\label{rklienODE}
		Note that following the results of \cite{cui2020observability} in the case of a manifold without boundary, the weak observability (that is the observability up to a weaker norm, corresponding to high frequency, see Definition 1.2 in \cite{cui2020observability}) of our system is equivalent to the observability of the ODE systems 
		\begin{equation}\label{systemODEdamped}
		\begin{cases}
		\dot{x}(s) + \frac{1}{2}a(\gamma_{\rho_{0}}(s))x(s)  +\frac{1}{2} b(\gamma_{\rho_{0}}(s))y(s)&= 0,  \\
		\dot{y}(s)   - \frac{1}{2} b(\gamma_{\rho_{0}}(s))x(s)&= 0,  \\
		(x(0), y(0))&=(x_{0},y_{0}),
		\end{cases}
		\end{equation}
		where $s\mapsto \gamma_{\rho_{0}}(s)$ is the geodesic flow starting at point $\rho_{0}\in T^{*}M$. The observation is made by $\frac{1}{4}\int_{0}^{T} \left|a(\gamma_{\rho_{0}}(s))x(s)\right|^{2}ds$. It is well known that it is equivalent to the observation of the undamped system
		\begin{equation}\label{systemODEundamped}
		\begin{cases}
		\dot{x}(s) +\frac{1}{2} b(\gamma_{\rho_{0}}(s))y(s)&= 0 , \\
		\dot{y}(s)  - \frac{1}{2}b(\gamma_{\rho_{0}}(s))x(s)&= 0 , \\
		(x(0), y(0))&=(x_{0},y_{0}).
		\end{cases}
		\end{equation}
		With $X(s)=(x(s), y(s))$ and $b(s)=b(\gamma_{\rho_{0}}(s))$ for simplicity, it can be written \\ $\dot{X}(s)=- \dfrac{1}{2} b(s)MX(s)$ with $M=\begin{pmatrix}0&1\\-1&0\end{pmatrix}$. The solution is then $X(s)= e^{- B(s)M}X_{0}  $ with $B(s)=\dfrac{1}{2} \displaystyle \int_{0}^{s} b(\tau)d\tau$. 
		It is easy to solve and we can prove (see Section 5.2 of the arXiv preprint arXiv:1810.00512 which is an expanded version of \cite{cui2020observability})
		that the observability is equivalent to the following assumption.
		
		For any $\gamma_{0}\in T^{*}M$, there exists $0<t_1<t_2< T$, such that
		$$
		a( \gamma_{\rho_{0}}(t_{1}))\neq 0,a( \gamma_{\rho_{0}}(t_{2}))\neq 0, \int_{t_{1}}^{t_{2}}b( \gamma_{\rho_{0}}(\tau))d\tau\not\in 2 \pi \mathbb{Z} .
		$$
		Our assumption trivially implies this fact. Note that in the case of a domain with boundary, the geodesic flow should be replaced by the generalized broken bicharacteristic flow of Melrose-Sj\"ostrand, see \cite{ML:78}.
	\end{remark}
	
	\subsection{Exponential decay of semilinear coupled wave system}

	In this section, we need the following results. It is written this way in \cite[Corollary 4.2]{2}, but it is a straightforward corollary of \cite[Theorem 8]{dehman2003stabilization}.
	\begin{lemma}\label{L5}
		Let $R>0$ and $ T>0$. Let $s  \in  \left[   0, 1  \right[  $ and let $\epsilon = \min (1-s,(5-p)/2,(17-3p)/14) > 0$. There exist $(q,r)$ satisfying \eqref{coupleadm} and $C >0 $ such that the following property holds. If $v \in L^\infty (\left[ 0, T \right] , H ^{1+ s}(\Omega) \bigcap H_0^1 (\Omega))$ is a function with finite Strichartz norms $ \Vert v \Vert _{L^q _T L^r} \leq R $, then for $i=1,2$,  $f_i(v) \in L^1 (\left[ 0, T \right] , H_{0} ^{s+ \epsilon}(\Omega))$ and moreover 
		\begin{eqnarray}
		\Vert f_i(v)  \Vert _{L^1 _T H_{0} ^{s+ \epsilon}(\Omega)} \leq C \Vert v \Vert _{L^\infty _T  H ^{1+ s}(\Omega) \bigcap H_0^1 (\Omega) }.
		\end{eqnarray}
		The constant $C$ depends only on $\Omega $, $(q,r)$, $R$ and the constant in estimate \eqref{equiv}. 
	\end{lemma}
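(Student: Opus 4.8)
The plan is to prove the estimate componentwise — the two equations decouple since $\mathcal{F}$ acts diagonally — so it suffices to treat a single scalar $f=f_{i}\in\mathcal{C}^{1}$ with $f(0)=0$ and $|f'(s)|\lesssim (1+|s|)^{p-1}$, and to establish, for each fixed time, a \emph{tame} spatial estimate in which exactly one factor of $v$ is measured at top regularity $\Vert v\Vert_{H^{1+s}}$ while the remaining $p-1$ factors are absorbed, after integration in $t$, by the Strichartz bound $\Vert v\Vert_{L^{q}_{T}L^{r}}\leq R$. The membership $f(v(t))\in H^{s+\epsilon}_{0}(\Omega)$ is not problematic: since $v(t)\in H^{1}_{0}(\Omega)$ vanishes on $\Gamma$ and $f(0)=0$, the trace of $f(v(t))$ on $\Gamma$ vanishes, and because the first term in the definition of $\epsilon$ forces $\sigma:=s+\epsilon\leq 1$, vanishing trace together with $H^{\sigma}$ regularity is precisely the condition defining $H^{\sigma}_{0}$ (footnote definition). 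Integrating the spatial estimate in time and using Hölder on the finite interval $[0,T]$ then gives the claim.

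For the core spatial estimate I would split $\Vert f(v)\Vert_{H^{\sigma}_{0}}\simeq \Vert f(v)\Vert_{L^{2}}+\Vert (-\Delta_{D})^{\sigma/2}f(v)\Vert_{L^{2}}$. The $L^{2}$ term is handled exactly as in the existence proof: from $|f(v)|\lesssim |v|(1+|v|^{p-1})$ one gets $\Vert f(v)\Vert_{L^{2}}\lesssim \Vert v\Vert_{L^{2}}+\Vert v\Vert_{L^{2p}}^{p}$, and the supercritical-looking summand is dealt with by writing $\Vert v\Vert_{L^{2p}}^{p}=\Vert v\Vert_{L^{2p}}^{p-1}\Vert v\Vert_{L^{2p}}$ and using the Hölder-in-time trick around \eqref{2.24}. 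For the homogeneous term I would invoke the fractional chain rule of Christ--Weinstein (valid for $f\in\mathcal{C}^{1}$ and $\sigma\in(0,1)$, the endpoint $\sigma=1$ being the ordinary chain rule), transferred to the domain through the identification of $\mathcal{D}((-\Delta_{D})^{\sigma/2})$ with $H^{\sigma}_{0}$, to obtain
\begin{eqnarray}
\Vert (-\Delta_{D})^{\sigma/2}f(v)\Vert_{L^{2}}\lesssim \Vert f'(v)\Vert_{L^{p_{1}}}\,\Vert |D|^{\sigma}v\Vert_{L^{p_{2}}},\qquad \tfrac{1}{p_{1}}+\tfrac{1}{p_{2}}=\tfrac{1}{2}.\nonumber
\end{eqnarray}
I would then fix $p_{2}=6/(2\epsilon+1)$, so that $H^{1+s}\hookrightarrow W^{\sigma,p_{2}}$ is the \emph{exact} endpoint Sobolev embedding (one checks $1+s-\tfrac{3}{2}=\sigma-\tfrac{3}{p_{2}}$), giving the single linear power $\Vert |D|^{\sigma}v\Vert_{L^{p_{2}}}\lesssim \Vert v\Vert_{H^{1+s}}$. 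The conjugate exponent is $p_{1}=3/(1-\epsilon)$, and the growth of $f'$ yields $\Vert f'(v)\Vert_{L^{p_{1}}}\lesssim 1+\Vert v\Vert_{L^{m}}^{p-1}$ with $m:=(p-1)p_{1}=3(p-1)/(1-\epsilon)$.

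It remains to integrate the nonlinear factor in time, that is to bound $\int_{0}^{T}\bigl(1+\Vert v(t)\Vert_{L^{m}}^{p-1}\bigr)\,dt$ by a constant depending only on $T$ and $R$. This is the delicate point, and it is \emph{purely exponent bookkeeping}: because the target is linear in $\Vert v\Vert_{L^{\infty}_{T}H^{1+s}}$, the leftover $p-1$ powers cannot borrow any regularity and must be controlled by the Strichartz norm alone. On the bounded domain this succeeds as soon as there is an admissible pair $(q,r)$ (so $\tfrac1q+\tfrac3r=\tfrac12$, $q\geq 7/2$, whence $r=6q/(q-2)\in(6,14]$) with $q\geq p-1$ and $r\geq m$, for then $\Vert v\Vert_{L^{m}}\lesssim\Vert v\Vert_{L^{r}}$ and Hölder in time gives $\Vert v\Vert^{p-1}_{L^{p-1}_{T}L^{r}}\lesssim T^{\gamma}R^{p-1}$. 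One verifies that the remaining two constraints on $\epsilon$ are exactly the two regimes of this requirement: for $p\leq 9/2$ the endpoint pair $q=7/2$, $r=14$ satisfies the time condition and the spatial condition $m\leq 14$ reads $\epsilon\leq (17-3p)/14$; for $9/2<p<5$ the time condition forces $q=p-1$, hence $r=6(p-1)/(p-3)$, and $m\leq r$ reads $\epsilon\leq (5-p)/2$ (the two bounds coinciding at $p=9/2$).

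The main obstacle is therefore not any single inequality but this closure of exponents: checking that $\epsilon=\min\bigl(1-s,(5-p)/2,(17-3p)/14\bigr)$ simultaneously keeps $\sigma\leq1$, pins $p_{2}$ at the critical Sobolev threshold, and leaves room for an admissible Strichartz pair absorbing the factor $|v|^{p-1}$. Since all of this is carried out for a scalar subcritical nonlinearity in \cite[Theorem 8]{dehman2003stabilization} and recorded in the present domain formulation in \cite[Corollary 4.2]{2}, one may alternatively simply apply that result to each component $f_{1}(u)$ and $f_{2}(v)$.
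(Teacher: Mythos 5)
Your proposal should first be measured against what the paper actually does: the paper contains no proof of Lemma \ref{L5} at all — it is quoted from \cite[Corollary 4.2]{2}, itself presented there as a corollary of \cite[Theorem 8]{dehman2003stabilization}. So your closing remark (apply the scalar result componentwise, which is legitimate since $\mathcal{F}$ acts diagonally) is exactly the paper's argument, and by itself it is a complete proof. Your reconstruction of the scalar estimate goes beyond the paper, and its core is correct: the chain-rule step with the endpoint exponents $p_{2}=6/(2\epsilon+1)$, $p_{1}=3/(1-\epsilon)$, and the verification that an admissible pair \eqref{coupleadm} with $q\geq p-1$ and $r\geq m=3(p-1)/(1-\epsilon)$ exists precisely when $\epsilon\leq (17-3p)/14$ (pair $(7/2,14)$, the binding constraint for $p\leq 9/2$) or $\epsilon\leq (5-p)/2$ (pair $(p-1,6(p-1)/(p-3))$, binding for $9/2<p<5$) — this is genuinely the origin of the otherwise mysterious formula for $\epsilon$, with $1-s$ only keeping $\sigma=s+\epsilon\leq 1$. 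Two caveats, though: the transfer of the Christ--Weinstein chain rule to a bounded domain, which you dispatch in a subordinate clause, is precisely the technical content of the cited references (there is no off-the-shelf fractional chain rule for $(-\Delta_{D})^{\sigma/2}$; \cite{dehman2003stabilization} and \cite{2} prove the nonlinear estimate by Meyer-multiplier/spectral-localization arguments); and the identification $\mathcal{D}((-\Delta_{D})^{\sigma/2})=H^{\sigma}_{0}$ fails at $\sigma=1/2$, an edge case the paper itself is careful to exclude elsewhere.

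There is one concrete gap, repairable but real as written: the $L^{2}$ term. Controlling $\int_{0}^{T}\Vert v\Vert_{L^{2p}}^{p}\,dt$ via the trick of \eqref{2.24} uses the Strichartz norm of the pair $(2p/(p-3),2p)$, but the lemma's hypothesis provides a bound for a \emph{single} pair $(q,r)$, and in your scheme the homogeneous term has already pinned it down. Take $p=4$, $s=0$: then $\epsilon=5/14$, $m=14$, so $(q,r)=(7/2,14)$ is forced; now $\int_{0}^{T}\Vert v\Vert_{L^{8}}^{4}\,dt$ is not controlled by $\Vert v\Vert_{L^{7/2}_{T}L^{14}}$, since H\"older in time fails ($4>7/2$, and likewise $2p/(p-3)=8>7/2$), while putting one factor in $L^{\infty}_{T}L^{2p}$ fails because $H^{1}\not\hookrightarrow L^{8}$ in dimension three (and $s=0$ gives no extra room). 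The repair is one line and places the $L^{2}$ term under the very same exponent constraints as the homogeneous one: estimate $\Vert v|v|^{p-1}\Vert_{L^{2}}\leq \Vert v\Vert_{L^{6}}\Vert v\Vert^{p-1}_{L^{3(p-1)}}\lesssim \Vert v\Vert_{H^{1}_{0}}\Vert v\Vert^{p-1}_{L^{3(p-1)}}$; since $3(p-1)\leq m\leq r$ and $p-1\leq q$, H\"older in time on the bounded domain gives $\int_{0}^{T}\Vert v\Vert^{p-1}_{L^{3(p-1)}}\,dt\lesssim T^{\gamma}R^{p-1}$, and the resulting bound stays linear in $\Vert v\Vert_{L^{\infty}_{T}(H^{1+s}\cap H^{1}_{0})}$, as the statement requires.
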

	\begin{proposition}(asymptotic compactness property \cite{2}) \item \label{L2}
		Let	$f_i \in \mathcal{C}^1(\mathbb{R},\mathbb{R})$, for $i=1,2$, satisfy \eqref{nonlinearity}, let $(\mathcal{U}_{n,0})_{n \geq 0 }$ be a sequence of initial data which is bounded in $\mathcal{H}$ and let $(\mathcal{U}_{n})_{n \geq 0 }$ be the corresponding solutions of the semilinear coupled wave system \eqref{system}. Let $ (t_n)  \in \mathbb{R}_+$ be a sequence of times such that $t_n \longrightarrow + \infty $ when $n$ goes to $+ \infty $.

		Then there exist subsequences $ (\mathcal{U}_{\phi (n)})$ and $ (t_{\phi (n)})$ and a global solution $ \mathcal{U}_{\infty}$ of \eqref{system} such that 
		$$   \mathcal{U}_{\phi (n)}(t_{\phi (n)}+.)\longrightarrow \mathcal{U}_{\infty} (.) \; \text{ in}\;\; \mathcal{C}^0 ( [0,T[, \mathcal{H}) \quad \text{for all } T >0.$$
	\end{proposition}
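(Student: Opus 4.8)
The plan is to turn the weak compactness of the trajectories into strong compactness in $\mathcal{H}$ by exploiting the smoothing effect of the subcritical nonlinearity (Lemma \ref{L5}) together with the exponential decay of the linear semigroup (Theorem \ref{th2} and Corollary \ref{cor1}). Write $\mathcal{V}_n(s):=\mathcal{U}_{n}(t_n+s)$ and $\mathcal{U}_n=(u_n,v_n,\partial_t u_n,\partial_t v_n)$. First I would record the uniform a priori bounds. Since the energy is non-increasing and $E_{\mathcal{U}_n}(0)$ is bounded, estimate \eqref{equiv} gives $\sup_n\sup_{t\ge 0}\|\mathcal{U}_n(t)\|_{\mathcal{H}}\le C$; Theorem \ref{theorem} bounds $\|(u_n,v_n)\|_{(L^q_TL^r)^2}$ uniformly on each unit time window; and Lemma \ref{L5} applied with $s=0$ (so $\epsilon=\min(1,(5-p)/2,(17-3p)/14)>0$) yields, for every integer $k$, a uniform bound $\|\mathcal{F}(\mathcal{U}_n)\|_{L^1([k,k+1],\mathcal{H}^\epsilon)}\le C$, the velocity components $-f_1(u_n),-f_2(v_n)$ lying in $H_0^\epsilon$.

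The heart of the argument is to show that the states $\mathcal{U}_n(t_n)$ are relatively compact in $\mathcal{H}$. Using the Duhamel formula between $0$ and $t_n$,
\begin{equation}
\mathcal{U}_n(t_n)=\mathcal{S}(t_n)\mathcal{U}_n(0)+\int_0^{t_n}\mathcal{S}(t_n-s)\mathcal{F}(\mathcal{U}_n)(s)\,ds .
\end{equation}
By Theorem \ref{th2} the first term satisfies $\|\mathcal{S}(t_n)\mathcal{U}_n(0)\|_{\mathcal{H}}\le Ce^{-\beta t_n}\|\mathcal{U}_n(0)\|_{\mathcal{H}}\to 0$ strongly in $\mathcal{H}$. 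For the Duhamel term I would use the exponential decay in the stronger norm (Corollary \ref{cor1}) together with the regularity gain: slicing $[0,t_n]$ into unit windows gives
\begin{equation}
\Big\|\int_0^{t_n}\mathcal{S}(t_n-s)\mathcal{F}(\mathcal{U}_n)(s)\,ds\Big\|_{\mathcal{H}^\epsilon}\le\sum_{k\ge 0}Ce^{-\beta k}\,\|\mathcal{F}(\mathcal{U}_n)\|_{L^1([t_n-k-1,\,t_n-k],\mathcal{H}^\epsilon)}\le C,
\end{equation}
uniformly in $n$, since the exponential weight concentrates on the recent history and each window contributes a uniformly bounded amount. Thus $\mathcal{U}_n(t_n)$ is the sum of a sequence tending to $0$ in $\mathcal{H}$ and a sequence bounded in $\mathcal{H}^\epsilon$; the compact embedding $\mathcal{H}^\epsilon\hookrightarrow\mathcal{H}$ (Rellich, valid because $\epsilon>0$) makes it relatively compact in $\mathcal{H}$.

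It then remains to extract a subsequence (relabeled with index $\phi(n)$) such that $\mathcal{U}_{\phi(n)}(t_{\phi(n)})\to\mathcal{V}_0$ strongly in $\mathcal{H}$, and to let $\mathcal{U}_\infty$ be the global solution of \eqref{system} with datum $\mathcal{V}_0$ furnished by the global existence corollary of Section \ref{s:exis}. Because every datum involved has energy bounded by a fixed constant, the continuous-dependence estimate of Theorem \ref{theo} holds on a fixed time step $T_0$ depending only on that bound; iterating it over $[0,T]$ propagates the strong convergence of the data into $\mathcal{U}_{\phi(n)}(t_{\phi(n)}+\cdot)\to\mathcal{U}_\infty$ in $\mathcal{C}^0([0,T[,\mathcal{H})$ for every $T>0$.

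I expect the main obstacle to be the uniform $\mathcal{H}^\epsilon$ bound of the second step: one must combine the higher-norm exponential decay of Corollary \ref{cor1} with the subcritical smoothing of Lemma \ref{L5} and sum over the growing interval $[0,t_n]$, the conceptual point being that the dissipative dynamics forgets the rough part of the data while the nonlinear interaction only produces $\mathcal{H}^\epsilon$-regular increments. Everything else is soft: weak-strong extraction, Rellich compactness, and iterated stability.
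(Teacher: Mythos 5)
Your proposal is correct and follows essentially the same route as the paper: Duhamel splitting of $\mathcal{U}_n(t_n)$, exponential decay of $\mathcal{S}(t_n)\mathcal{U}_n(0)$ in $\mathcal{H}$ (Theorem \ref{th2}), a uniform $\mathcal{H}^\epsilon$ bound on the Duhamel term obtained by slicing $[0,t_n]$ into unit windows and combining Lemma \ref{L5} with the higher-norm decay of Corollary \ref{cor1} to sum the geometric series, then Rellich compactness and iterated continuous dependence from Theorem \ref{theo}. The only cosmetic difference is that the paper first extracts a weak limit and identifies it with the strong limit of the compact part, whereas you invoke relative compactness directly; this changes nothing of substance.
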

	
	\begin{proof}
		Due to the equivalence between the norm of $\mathcal{H}$ and the energy given in \eqref{equiv} and since the energy is decreasing in time, we know that $\mathcal{U}_n(t)$ is bounded in $\mathcal{H}$ uniformly with respect to $n$ and $t\geq 0$. In particular, the sequence $(\mathcal{U}_n(t_{n}))_{n\in \mathbb{N}}$ is a bounded sequence with value in the Hilbert space $\mathcal{H}$ from which we can extract a subsequence, so we can assume that it weakly converges to a limit called $\mathcal{U}_{\infty,0}\in \mathcal{H}$. Using the Duhamel formula, we have 
		\begin{eqnarray}
		\mathcal{U}_{n} (t_{n}) & = & \mathcal{S}(t_{n})\mathcal{U}_{n} (0) +\int_{0}^{t_{n}} \mathcal{S}(s) \mathcal{F}(\mathcal{U}_{n})(t_{n} -s) ds , \nonumber \\[0,4cm]
		& = & \mathcal{S}(t_{n})\mathcal{U}_{n}(0) +\int_{0}^{\lfloor t_{n} \rfloor } \mathcal{S}(s) \mathcal{F}(\mathcal{U}_{n})(t_{n} -s) ds +\int_{\lfloor t_{n} \rfloor}^{ t_{n} } S(s) \mathcal{F}(\mathcal{U}_{n})(t_{n} -s) ds, \nonumber \\[0,4cm]
		& = & \mathcal{S}(t_{n})\mathcal{U}_{n}(0) + \sum_{k=0}^{\lfloor t_{n} \rfloor -1}\int_{k}^{k+1 } \mathcal{S}(s) \mathcal{F}(\mathcal{U}_{n})(t_{n} -s) ds +\int_{\lfloor T_{n} \rfloor}^{ t_{n} } \mathcal{S}(s) \mathcal{F}(\mathcal{U}_{n})(t_{n} -s) ds, \nonumber\\[0,4cm]
		& = & \mathcal{S}(t_{n})\mathcal{U}_{n}(0) + \sum_{k=0}^{\lfloor t_{n} \rfloor -1} \mathcal{S}(k) \int_{0}^{1 } \mathcal{S}(s) \mathcal{F}(\mathcal{U}_{n})(t_{n} -s -k) ds +\int_{\lfloor t_{n} \rfloor}^{ t_{n} } \mathcal{S}(s) \mathcal{F}(\mathcal{U}_{n})(t_{n} -s) ds, \nonumber\\[0,4cm] % y=s-k
		& = & \mathcal{S}(t_{n})\mathcal{U}_{n}(0) + \sum_{k=0}^{\lfloor t_{n} \rfloor -1} \mathcal{S}(k) I_{n,k} + I_{n} .
		\end{eqnarray}
		Let $\epsilon\in (0, \min (1-s,(5-p)/2,(17-3p)/14,1/2) )$.	Since $(\mathcal{S}(t))_{t\geq 0} $ is a semi-group in $\mathcal{H}^{\epsilon}$, then there exists $C>0$ such that 
		\begin{eqnarray}
		\Vert I_n \Vert_{\mathcal{H}^\epsilon } &=& \left \Vert \int_{\lfloor t_{n} \rfloor}^{ t_{n} } \mathcal{S}(s) \mathcal{F}(\mathcal{U}_{n})(t_{n} -s) ds \right \Vert_{\mathcal{H}^\epsilon}, \nonumber\\
		&\leq & \int_{\lfloor t_{n} \rfloor}^{ t_{n} } \Vert \mathcal{S}(s) \mathcal{F}(\mathcal{U}_{n})(t_{n} -s)\Vert_{\mathcal{H}^\epsilon} ds  , \nonumber\\
		&\leq & C \int_{\lfloor t_{n} \rfloor}^{ t_{n} } \Vert \mathcal{F}(\mathcal{U}_{n})(t_{n} -s) \Vert_{\mathcal{H} ^\epsilon }  ds  , \nonumber\\
		&\leq & C \int_{\lfloor t_{n} \rfloor}^{ t_{n} } \left( \Vert f_1(u_{n})(t_{n} -s) \Vert_{H ^\epsilon _0 } + 
		\Vert f_2(v_{n})(t_{n} -s) \Vert_{H ^\epsilon _0 } \right)  ds  . 
		%&\leq & \int_{\lfloor 0 \rfloor}^{ t_{n}-t_{n} } \Vert u_n (s,.) \Vert_{L^{2p}} ^p +\Vert v_n (s,.) \Vert_{L^{2p}} ^p ds , \nonumber\\
		%&\leq & \int_{\lfloor 0 \rfloor}^{ 1 } \Vert u_n (s,.) \Vert_{L^{2p}} ^p +\Vert v_n (s,.) \Vert_{L^{2p}} ^p ds , \nonumber\\
		%	&\leq & \Vert u_n  \Vert_{L_1 ^P L^{2p}} ^p  + \Vert v_n  \Vert_{L_1 ^P L^{2p}} ^p .
		\end{eqnarray}
		We can obtain similar bounds for  $I_{n,k}$ with different sets of integration. Using Lemma \ref{L5}, and  since the energy of $ \mathcal{U}_n $ is uniformly bounded, then  terms $  I_{n}  $, as well as $I_{n,k}$, are bounded by the same constant $M$ in $\mathcal{H}^\epsilon $ uniformly in $n$ and $k$. Moreover, we have
		\begin{eqnarray}
		\Vert \mathcal{U}_n (t_n) - \mathcal{S}(t_{n})\mathcal{U}_{n}(0) \Vert_{\mathcal{H}^\epsilon} & \leq &     \left \Vert \sum_{k=0}^{\lfloor t_{n} \rfloor -1} \mathcal{S}(k) I_{n,k} + I_{n} \right \Vert_{\mathcal{H}^\epsilon}, \nonumber \\
		& \leq &  \sum_{k=0}^{\lfloor t_{n} \rfloor -1} \Vert \mathcal{S}(k) I_{n,k} \Vert_{\mathcal{H}^\epsilon} +
		\Vert  I_{n} \Vert_{\mathcal{H}^\epsilon} . \nonumber 
		\end{eqnarray}
		Using Corollary \ref{cor1}, we get
		\begin{eqnarray}
		\Vert \mathcal{U}_n (t_n) - \mathcal{S}(t_{n})\mathcal{U}_{n}(0) \Vert_{\mathcal{H}^\epsilon} & \leq &  \sum_{k=0}^{\lfloor t_{n} \rfloor -1} e^{-\beta k} M + M , \nonumber \\
		& \leq &  M \left( \sum_{k=0}^{\lfloor t_{n} \rfloor -1} e^{-\beta k} +1 \right)  , \nonumber \\
		& \leq &  M \left( 1+ \dfrac{1}{1- e^{- \beta }}\right) . \nonumber 
		\end{eqnarray}
		In particular, denoting $R_{n}=\mathcal{U}_n (t_n) - \mathcal{S}(t_{n})\mathcal{U}_{n}(0)\in \mathcal{H}$, we prove that  $\sup_{n\in \mathbb{N}}\Vert R_{n}\Vert_{\mathcal{H}^\epsilon}<+\infty$. Using the  Rellich theorem, we can extract a subsequence so that $R_{\varphi(n)} \underset{n\to +\infty}{\longrightarrow }R_{\infty}$ in  $\mathcal{H} $ for some $R_{\infty}\in \mathcal{H}$.
		Moreover, since $(\mathcal{U}_{n,0})_{n \geq 0 }$ is bounded in $\mathcal{H}$ and $t_{n}\to +\infty$, Theorem \ref{th2} shows that $\mathcal{S}(t_{n})\mathcal{U}_{n}(0)  \underset{n\to +\infty}{\longrightarrow }0$ in  $\mathcal{H} $. In particular, $ \mathcal{U}_{\varphi(n)} (t_{\varphi(n)})  \underset{n\to +\infty}{\longrightarrow }R_{\infty}$ in  $\mathcal{H} $ and $R_{\infty}= \mathcal{U}_{\infty,0}$ by uniqueness of the weak limit. 
		
		$\mathcal{U}_{\infty}$ is defined as the solution of \eqref{system} on $[0,+\infty)$ with an initial datum $\mathcal{U}_{\infty}(0)=\mathcal{U}_{\infty,0}$ while $\mathcal{U}_{\phi (n)}(t_{\phi (n)}+.)$ is the unique solution of \eqref{system} on $[0,+\infty)$ with an initial datum $\mathcal{U}_{\phi (n)}(t_{\phi (n)})$. Then, since $\mathcal{U}_{\phi (n)}(t_{\phi (n)})\to \mathcal{U}_{\infty}(0)$ in $\mathcal{H}$, the local uniform continuity of the flow map gives that for all $T >0$, we have $\mathcal{U}_{\phi (n)}(t_{\phi (n)}+.)\longrightarrow \mathcal{U}_{\infty} (.)$ in $\mathcal{C}^0 ([0,T[, \mathcal{H})$. It is worth mentioning that the local uniform continuity of the flow map is proved in Theorem \ref{theo} for small $T$ depending on the norm in $\mathcal{H}$, but it is easy to iterate it on any interval $[0,T]$ since we have a priori bounds on the energy.
		
	\end{proof}
	
	Note that it could seem surprising in the previous proof that the Duhamel term is more regular, that is  in $\mathcal{H}^{\epsilon}$. It is a consequence of the fact that the nonlinearity is subcritical which was crucial in Lemma \ref{L5}.

	\begin{proposition}(Unique continuation)\label{L3}\item
		Let	$f_i \in \mathcal{C}^1(\mathbb{R},\mathbb{R})$, for $i=1,2$, satisfy \eqref{nonlinearity}. We assume that $f_i$ is analytic, $ \omega_{b} $ satisfies the geometric control condition and $ supp(b) \subset \omega_{a} $.
		The unique solution $ (u,v) $ in $  C (\mathbb{R}_{+},(H_{0} ^{1}( \Omega) )^{2}) \cap C ^{1}(\mathbb{R}_{+},(L ^{2}( \Omega) )^{2}) $ for the system
		\begin{eqnarray}\label{cont}
		\begin{cases}
		\partial _{tt} u -\Delta u  + b(x) \partial _t v + f_1(u) = 0 & in \; \Omega \times \mathbb{R}_{+} ^{*},\\
		\partial _{tt} v -\Delta v - b(x) \partial _t u + f_2(v)  = 0 & in \; \Omega \times \mathbb{R}_{+} ^{*} ,\\
		u=v=0  & on \; \Gamma \times \mathbb{R}_{+} ^{*},\\
		a(x) \partial _t u =0  & in \; \Omega \times \mathbb{R}_{+} ^{*},\\
		\left(  u_{0} , v_{0} ,  u_{1} , v_{1} \right) \in \mathcal{H},
		\end{cases}
		\end{eqnarray}
		is the trivial one $(u,v)= (0,0)$. \\
	\end{proposition}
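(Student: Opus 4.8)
The plan is to use the damping constraint to \emph{decouple} the system, turn each equation into a scalar semilinear wave equation whose velocity vanishes on a set satisfying the geometric control condition, and then invoke the analytic unique continuation scheme of \cite{2}.

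First I would record the consequences of the constraint $a(x)\,\partial_t u=0$. Since $\omega_a\subset\{a\geq a_0>0\}$, this forces $\partial_t u=0$ on $\omega_a\times\mathbb{R}_+^*$, so that $u(\cdot,t)$ and all its spatial derivatives are independent of $t$ on the open set $\omega_a$. Because $Supp(b)\subset\omega_a$, we then get $b\,\partial_t u\equiv 0$ on all of $\Omega$: indeed $\partial_t u=0$ wherever $b\neq 0$, and $b=0$ elsewhere. Hence the second equation of \eqref{cont} decouples into the scalar conservative equation
\[
\partial_{tt}v-\Delta v+f_2(v)=0\quad\text{in }\Omega\times\mathbb{R}_+^*,\qquad v|_\Gamma=0 .
\]
Its energy is conserved, so $\sup_{t\geq 0}\Vert v(t)\Vert_{H_0^1}<+\infty$ by the positivity of $\mathcal{G}$ and \eqref{equiv}. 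Next I would restrict the first equation of \eqref{cont} to $\omega_a$, where $\partial_{tt}u=0$, giving $-\Delta u+b\,\partial_t v+f_1(u)=0$; here $\Delta u$ and $f_1(u)$ are $t$-independent, so $b\,\partial_t v$ is $t$-independent on $\omega_a$, and since $b\geq b_0$ on $\omega_b$, $\partial_t v$ is $t$-independent on $\omega_b$. Writing $\partial_t v(x,t)=g(x)$ on $\omega_b$ yields $v(x,t)=v(x,0)+t\,g(x)$ there, and the uniform $H_0^1$-bound forces $g\equiv 0$; thus $\partial_t v=0$ on $\omega_b\times\mathbb{R}_+^*$.

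Now $v$ solves the scalar defocusing analytic semilinear wave equation, with velocity vanishing on the GCC set $\omega_b$: this is exactly the configuration of the unique continuation theorem of Joly--Laurent \cite{2}. The mechanism is to differentiate in time, so that $w=\partial_t v$ solves the linear equation $\partial_{tt}w-\Delta w+f_2'(v)\,w=0$ with $w=0$ on $\omega_b\times\mathbb{R}_+^*$; the subcritical regularity of Lemma \ref{L5} places the coefficient $f_2'(v)$ in a suitable class, and combining the geometric control condition with the time-analyticity of the solution (a consequence of the analyticity of $f_2$) one applies the Robbiano--Zuily--H\"ormander--Tataru unique continuation theorem for the wave operator with time-analytic coefficients to propagate $w\equiv 0$ across $\Omega$. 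Hence $v$ is stationary, $-\Delta v+f_2(v)=0$ with $v|_\Gamma=0$; multiplying by $v$ and using $sf_2(s)\geq 0$ gives $\Vert\nabla v\Vert=0$, so $v\equiv 0$. Finally I would feed $v\equiv 0$ back into the first equation, which becomes the scalar equation $\partial_{tt}u-\Delta u+f_1(u)=0$ with $\partial_t u=0$ on $\omega_a\supset\omega_b$; the same unique continuation argument makes $u$ stationary, and defocusing again gives $u\equiv 0$.

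The elementary parts are the decoupling, the linear-growth/boundedness argument that upgrades "$\partial_t v$ is $t$-independent on $\omega_b$" to "$\partial_t v=0$ on $\omega_b$", and killing the stationary solutions via the defocusing sign condition. \textbf{The main obstacle is the scalar reduction-to-stationary step}: unique continuation for the linear wave equation with a genuinely time-dependent potential $f_i'(\cdot)$. Classical Bardos--Lebeau--Rauch or H\"ormander unique continuation does not cover the whole domain for time-dependent coefficients; it is precisely the analyticity of the $f_i$, yielding time-analyticity of the coefficients, that licenses the Tataru-type theorem. This is the heart of \cite{2} and the reason the analyticity assumption is indispensable here.
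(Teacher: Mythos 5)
Your proposal is correct and takes essentially the same route as the paper: deduce $\partial_t u=0$ on $\omega_a$, kill $b\,\partial_t u$ via $supp(b)\subset\omega_a$, use a linear-growth-in-time versus uniform-boundedness argument to get $\partial_t v=0$ where $b$ is positive, decouple into two scalar defocusing equations with velocity vanishing on GCC sets, and conclude with the analytic unique continuation of Joly--Laurent \cite{2}. The only differences are cosmetic: the paper differentiates the first equation in time and runs the growth argument on $bv$ over $\omega_a$ (rather than restricting the undifferentiated equation and working with $v$ on $\omega_b$), treats the two decoupled equations in parallel rather than sequentially, and invokes \cite[Corollary 6.2]{2} as a black box where you sketch its internal mechanism (time-differentiation, time-analyticity, Tataru-type unique continuation, and the elliptic defocusing step).
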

	\begin{proof} 
		The fourth equation of \eqref{cont} gives $ a(x) \partial _t u =0 $ a.e in $ \Omega \times \mathbb{R}_{+} ^{*}$. Then $ \partial _t u =0 $ a.e. in $ \omega _{a} \times \mathbb{R}_{+} ^{*}$. 
		\begin{eqnarray}\label{uuu}
		u(x,t) = u(x)  \quad \textnormal{ a.e. for} \; (x,t) \in \omega _{a} \times \mathbb{R}_{+} ^{*}. 
		\end{eqnarray}
		We derive in the sense of distributions the first equation of the system \eqref{cont} relative to the variable time, and we get 
		$$ \partial _{ttt} u -\Delta \partial _t u  + b(x) \partial _{tt} v + \partial _t u f_1^\prime (u) = 0 .$$
		Thus, we have $ b(x) \partial _{tt}v = \partial _{tt}(bv)  =0 $ a.e. in $ \omega_a \times \mathbb{R}_{+} ^{*}$ in the distributional sense.  Consequently, there exists $g$ and $h\in L^{2}(\omega_{a})$ so that 
		
		\begin{eqnarray}\label{vvv}
		bv(x,t) = g(x) t + h(x) \quad \textnormal{ a.e. for} \; (x,t) \in \omega _{a} \times \mathbb{R}_{+} ^{*}.\nonumber
		\end{eqnarray}
		Using \eqref{equiv}, Poincar\'e inequality and the boundedness of $b$, we obtain for all $t\geq 0$
		\begin{eqnarray}
		\int_{t}^{t+1}E_{u,v}(\tau) d\tau & \geq & C \Vert (u,v,u^\prime , v ^\prime) \Vert_{L^{2}_{[t,t+1]}\mathcal{H}} ^2 \geq  C \Vert \nabla v \Vert ^2_{L^{2}_{[t,t+1]}L^{2}(\Omega)}  \geq  C \Vert  v \Vert_{L^{2}_{[t,t+1]}L^{2}(\Omega)} ^2 \geq  C \Vert  bv \Vert_{L^{2}_{[t,t+1]}L^{2}(\omega_{a})} ^2\nonumber \\
		& \geq & C \displaystyle \int_{t}^{t+1}\int_{\omega_a}  \vert g(x)\tau+ h(x) \vert ^2 dxd\tau\geq Ct^{2} \Vert  g \Vert^{2}_{{L^{2}(\omega_{a})}}-C\Vert  h \Vert^{2}_{{L^{2}(\omega_{a})}}.  %  ,\nonumber \\
		%	& \geq & C \displaystyle \int_{\omega_{g,\varepsilon}}  \vert g(x)t+ h(x) \vert ^2 dx\geq C mes(\omega_{g,\varepsilon}) t^{2}\varepsilon^{2 }  - C\Vert  bv \Vert_{L^{2}_{[t,t+1]}L^{2}(\omega_{a})} ,\nonumber \\
		%&& \xrightarrow[t \to + \infty]{} + \infty
		\end{eqnarray}
		Since $\int_{t}^{t+1}E_{u,v}(\tau) d\tau$ is bounded uniformly for $t>0$, we get $g=0$. In particular, we have in the sense of distribution in $\omega_{a}$
		$$b \partial_{t}v (x,t)=0, \quad \forall (x,t) \in \omega_a \times \mathbb{R}_{+} ^*. $$ 
		Since $supp (b)\subset \omega_{a}$, we get the same result on $\Omega$.
		%From \eqref{uuu} and \eqref{vvv} we get 
		
		%	\begin{eqnarray}
		%	- \Delta v + f_2(v) = 0 \quad p.p \; \text{in } \omega_b, 
		%	\end{eqnarray}
		%	then, we have 
		%	$$ \int_{\omega_{b}} \left(  \Vert \nabla v \Vert ^2 + v f_2(v) \right)  dx .$$ 
		%	Using the fact that $v f_2(v) \leq 0 $, we get $ \displaystyle \int_{\omega_{b}} \Vert \nabla v \Vert ^2 =0 $, then $ \nabla v =0 $ p.p in $ \omega_{b }$. Furthermore, $g$ is constant. Since, $ \omega_{b }$ is a open set and $ \displaystyle \int_{\omega_{b}} v f_2(v) =0 $, then we have 
		%	$$ v=0 \quad p.p \; in \; \omega_{b }\times \mathbb{R}_{+} ^{*} .$$
		%	Thus, we get $ \partial _{t} v =0 \quad p.p \; in \; \omega_{b }\times \mathbb{R}_{+} ^{*} .$
		
		Finally, we get the two following non coupled systems
		\begin{eqnarray}\label{cuu}
		\begin{cases}
		\partial _{tt} u -\Delta u   + f_1(u)= 0 & in \; \Omega \times \mathbb{R}_{+} ^{*},\\
		\partial _t u =0  & in \; \omega _a \times \mathbb{R}_{+} ^{*},\\
		u=0  & on \; \Gamma \times \mathbb{R}_{+} ^{*},\\
		( u_{0} ,  u_{1} ) \in \left( H_{0} ^{1} ( \Omega ) \times L^{2} ( \Omega ) \right) ,
		\end{cases}
		\end{eqnarray}
		and
		\begin{eqnarray}\label{cuv}
		\begin{cases}
		\partial _{tt} v -\Delta v  + f_2(v) = 0 & in \; \Omega \times \mathbb{R}_{+} ^{*} ,\\
		\partial _t v =0  & in \; \omega_{b} \times \mathbb{R}_{+} ^{*},\\
		v=0  & on \; \Gamma \times \mathbb{R}_{+} ^{*},\\
		(  v_{0} , v_{1} ) \in  H_{0} ^{1} ( \Omega ) \times L^{2} ( \Omega ) .
		\end{cases}
		\end{eqnarray}
		Using the result of unique continuation given in \cite[Corollary 6.2]{2} (with suitable translation in time) and the fact that both $\omega_{a}$ and $\omega_{b}$ satisfy the geometric control condition, \eqref{cuu} gives that $u=0$ and \eqref{cuv} gives that $v=0$. Consequently, we have $ (u,v)=(0,0) $.
	\end{proof}
	%Now, we will prove the following properties.
	\begin{proof}[Proof of Theorem \ref{th3.6}]
		We argue by contradiction. We suppose that inequality \eqref{IO} is false for all $T>0$. Then, there exists $ \mathcal{U}_n =(u_n,v_n,\partial _t u_n , \partial _t v_n)$ which represents a sequence of solution of \eqref{system} and a sequence of time, where $ T_{n} \underset{n\to \infty}{\longrightarrow} \infty $ such that
		\begin{eqnarray}
		\begin{cases}
		E_{u_n ,v_n}(0) \leq E_0 , \\[0.3cm]
		\displaystyle\int _{0} ^{T_{n}} \displaystyle\int _{\Omega } a(x) \vert \partial _t  u_{n}  \vert^2 dx dt \leq \frac{1}{n} E_{u_n ,v_n} (0) .
		\end{cases} \label{contradobserv}
		\end{eqnarray}
		We note $ \alpha _{n} = ( E _{u_n , v_n} (0))^{\frac{1}{2}} $.  Since $ \alpha _{n} \in [ 0, \sqrt{E_{0}} ] $, for all $n \in \mathbb{N}$, then we can extract a subsequence that will also be noted by $ \alpha _{n} $, which converges. We note its limit  $ \alpha$, then we have $ \alpha \in [ 0, \sqrt{E_{0}} ]$.\\
		Here, we distinguish two cases:
		
		\begin{enumerate}
			\item \textbf{Case} $\mathbf{\alpha \neq 0} $\\
			Using \eqref{equiv}, we obtain for all $ t>0 $
			\begin{eqnarray}
			\Vert  \mathcal{U}_{n} (t) \Vert_{\mathcal{H}} ^{2}	& \leq & C E_{u_n ,v_n}(t) \lesssim  E_0 .
			\end{eqnarray}
			Then, sequence  $ ( \Vert  \mathcal{U}_{n} (t) \Vert_{\mathcal{H}} )_{n \in \mathbb{N}} $ is uniformly bounded for $t>0$.\\
			We set $ (w_{n,1}, w_{n,2})(.) = (u_{n},v_{n}) (T_{n}/2+.)$.  Proposition \ref{L2} ensures the existence of a subsequence  of $ (w_{n,1}, w_{n,2})_{n\geq 0} $, which will be denoted also $ (w_{n,1}, w_{n,2}) $, and a global solution $ (w_{1}, w_{2}) $ on $[0,+\infty)$ of system \eqref{system} such that for all $T>0$ we have
			\begin{eqnarray}
			\mathcal{W}_{n}  =(w_{n,1}, w_{n,2} , \partial _t w_{n,1},\partial _t w_{n,2} ) \underset{n\to +\infty}{\longrightarrow} (w_{1}, w_{2} , \partial _t w_{1},\partial _t w_{2} )=\mathcal{W} \; \;in \; C^{0} ([0,T],\mathcal{H}).\nonumber
			\end{eqnarray}
			We have
			\begin{eqnarray} \label{123}
			E_{w_{n,1} ,w_{n,2}}(0)= E_{u_n ,v_n} (T_n /2) \leq E_{u_n ,v_n} (0),
			\end{eqnarray}
			and 
			\begin{eqnarray} \label{1234}
			E_{u_n ,v_n} (T_n /2) &=& E_{u_n ,v_n} (0) - \int _{0} ^{T_n /2} \int _{\Omega } a(x) \vert \partial _t u_n  (x,t) \vert ^2 dx dt , \nonumber \\
			&\geq & E_{u_n ,v_n} (0) - \frac{1}{n} E_{u_n ,v_n} (0), \nonumber \\
			&\geq & (1 - \frac{1}{n}) E_{u_n ,v_n} (0) .
			\end{eqnarray}
			We pass to the limit in equations \eqref{123} and \eqref{1234}, we obtain $ E_\mathcal{W} = \alpha^2 >0 $.\\
			On the other hand, \eqref{contradobserv} ensures that $ a \partial _t w_{n,1}  $ converges to $0$ in $ L^2 \left( \left[ -\frac{T_n}{2},\frac{T_n}{2}\right] ,L^2(\Omega)\right) $ and therefore on each $ L^2 \left( \left[ 0,T \right] ,L^2(\Omega)\right) $. This implies that $ a \partial _t w_{1} \equiv 0 $ on $[0,T]\times \Omega$ for any $T>0$ and thus $\mathcal{W}$ is a global solution of  system \eqref{cont}. Consequently, from Proposition \ref{L3}, we obtain $\mathcal{W} \equiv 0 $, and then  $E_{w_1 ,w_2} (0)= 0$, which contradicts $E_{w_1 ,w_2} (0) = \alpha ^2 >0$.  
			\item \textbf{Case} $\mathbf{\alpha = 0} $\\
			The assumptions on $f_i$, for $i=1,2$, allow to write $f_i(s) = f_i ^\prime (0) s +R_i(s)$ with 
			\begin{eqnarray}\label{rest}
			\vert R_i(s) \vert \leq C (\vert s \vert ^2 +\vert s \vert ^p) \quad \text{and } \quad \vert R_i ^\prime (s) \vert \leq C (\vert s \vert  +\vert s \vert ^{p-1}).
			\end{eqnarray} 
			We pose $ (w_{n,1}, w_{n,2}) = (u_{n}/\alpha_{n},v_{n}/\alpha_{n})$. Then $ (w_{n,1}, w_{n,2}) $ is the solution of system
			\begin{eqnarray}\label{11}
			\begin{cases}
			\partial _{tt} w_{n,1} -\Delta w_{n,1} + a(x) \partial _t  w_{n,1} + b(x) \partial _t w_{n,2} + f_1 ^\prime (0) w_{n,1} +\dfrac{1}{\alpha_{n} } R_1 (\alpha_{n} w_{n,1}) = 0 & in \; \Omega \times \mathbb{R}_{+} ^{*},\\
			\partial _{tt}w_{n,2} -\Delta w_{n,2} - b(x) \partial _t w_{n,1} + f_2 ^\prime (0) w_{n,1} +\dfrac{1}{\alpha_{n} } R _2(\alpha_{n} w_{n,2})   = 0 & in \; \Omega \times \mathbb{R}_{+} ^{*} ,
			\end{cases} 
			\end{eqnarray}
			we also have
			\begin{eqnarray}
			\int _{0} ^{T_n } \int _{\Omega } a(x) \vert \partial _{t} w_{n,1}  (x,t) \vert ^2 dx dt \leq \dfrac{C}{\alpha_{n} ^2 n} E_{u_n ,v_n}  (0) \leq \dfrac{C}{n}.
			\end{eqnarray}
			Then \eqref{equiv} ensures that the sequence $\left( \mathcal{W}_n(t)=(w_{n,1}, w_{n,2},\partial _t w_{n,1},\partial _t  w_{n,2})(t)\right) _{n \in \mathbb{N}}$ is bounded uniformly in $\mathcal{H}$.  More precisely, we have for all  $n \in \mathbb{N}$ and $t\in \left[ 0, T_n \right] $
			\begin{eqnarray}
			\label{bornesupW}
			\Vert \mathcal{W}_{n}(t) \Vert _{\mathcal{H}}^2 = \dfrac{\Vert \mathcal{U}_{n} ( t)\Vert _{\mathcal{H}} ^2}{\alpha_{n}^2} & \leq& C\dfrac{E_{u_n ,v_n}  (t)}{\alpha_{n}^2} \leq C \dfrac{E_{u_n ,v_n}  (0)}{\alpha_{n}^2}=C,
			\end{eqnarray}
			and the lower bounds,		
			
			\begin{eqnarray}\label{321}
			\Vert \mathcal{W}_{n}( t) \Vert _{\mathcal{H}}^2 = \dfrac{\Vert \mathcal{U}_{n} (t)\Vert _{\mathcal{H}} ^2}{\alpha_{n}^2} & \geq & \dfrac{E_{u_n ,v_n}  (t)}{C\alpha_{n}^2}\geq  \dfrac{E_{u_n ,v_n}  (T_{n})}{C\alpha_{n}^2}, \nonumber \\
			& \geq &  \dfrac{1}{C\alpha_{n}^2} \left( E_{u_n ,v_n}  (0) - \int _{0} ^{T_{n} } \int _{\Omega } a(x) \vert \partial _t u_n  (x, s) \vert ^2 dx d s \right) , \nonumber \\
			& \geq &  \dfrac{1}{C\alpha_{n}^2} \left( E_{u_n ,v_n}  (0) - \dfrac{1}{n} E_{u_n ,v_n}  (0) \right) , \nonumber \\
			& \geq &  \dfrac{1}{C\alpha_{n}^2} \left( 1 - \dfrac{1}{n}  \right) E_{u_n ,v_n}  (0) = \frac{1}{C}\left( 1-\dfrac{1}{n} \right) .\nonumber 
			\end{eqnarray}
			Then for large $n$ and all $t\in \left[ 0, T_n \right] $, we obtain 
			\begin{eqnarray}
			\Vert \mathcal{W}_{n}( t) \Vert _{\mathcal{H}}^2& \geq & \dfrac{1}{2C} > 0.
			\end{eqnarray}
			On the other hand, we have $ (w_{n,1}, w_{n,2})$ is the solution of  system \eqref{11}  with a nonlinearity satisfying
			\begin{eqnarray} \label{estimRn}\left \vert \dfrac{1}{\alpha_{n} } R_i (\alpha_{n} s) \right \vert \leq C (\alpha_{n}\vert s \vert^{2}  +\alpha_{n} ^{p-1}\vert s \vert ^p)\end{eqnarray} thanks to \eqref{rest}. 
			In particular, combined with \eqref{2.24}, this gives, recalling $3<p<5$ %\rouge{ for $i=1,\;2$}
			\begin{eqnarray}\label{estimRi}
			\left\Vert \dfrac{1}{\alpha_{n}}R_1(\alpha_{n} w_{n,1}) \right\Vert _{L^1 ((k,k+1),L^2)} & \leq & \displaystyle \int_{k}^{k+1} \left\Vert \dfrac{1}{\alpha_{n}}R_1(\alpha_{n} w_{n,i}) \right\Vert _{L^2} ds \nonumber \\
			& \leq & C \displaystyle \int_{k}^{k+1}   \left( \alpha_{n}\Vert w_{n,1} \Vert _{L^4}^{2}  +\alpha_{n} ^{p-1}\Vert w_{n,1} \Vert _{L^{2p}} ^p\right)  ds \nonumber\\
			& \leq &  C \displaystyle \int_{k}^{k+1}   \left( \alpha_{n}\Vert w_{n,1} \Vert _{L^{2p}} ^p  +\alpha_{n} ^{p-1}\Vert w_{n,1} \Vert _{L^{2p}} ^p\right)  ds  \qquad\qquad (p>2)\nonumber\\
			&\leq &C\alpha_{n}\left\Vert w_{n,1}\right\Vert_{L^{\frac{2p}{p-3}} ((k,k+1),L^{2p})}  +C\alpha_{n} ^{p-1}\left\Vert w_{n,1}\right\Vert _{L^{\frac{2p}{p-3}} ((k,k+1),L^{2p})} ^{p}\nonumber %\\
			%		& \leq & C \displaystyle \int_{k}^{k+1}   \left( \alpha_{n}\left\Vert \dfrac{1}{\alpha_{n}}u_{n} \right\Vert _{L^4}^{2}  +\alpha_{n} ^{p-1}\left\Vert \dfrac{1}{\alpha_{n}}u_{n} \right\Vert _{L^{2p}} ^{p}\right) ds \nonumber \\
			%		& \leq & C      \left( \alpha_{n}\left\Vert \dfrac{1}{\alpha_{n}}u_{n} \right\Vert _{L^2 (k,k+1),L^4}  +\alpha_{n} ^{p-1}\left\Vert \dfrac{1}{\alpha_{n}}u_{n} \right\Vert _{L^p (k,k+1),L^{2p}} ^{p}\right)  \nonumber \\
			%		& \leq & C      \left( \alpha_{n}\left\Vert \dfrac{1}{\alpha_{n}}u_{n} \right\Vert _{L^\infty (k,k+1),L^4}  +\alpha_{n} ^{p-1}\left\Vert \dfrac{1}{\alpha_{n}}u_{n} \right\Vert _{L^{\frac{2p}{p-3}} (k,k+1),L^{2p}} ^{p}\right)  \nonumber \\
			%			& \leq & C (\alpha_{n} + \alpha_{n} ^{p-1}).
			\end{eqnarray}
			Since $w_{n}$ is a solution of \eqref{11} and applying the Strichartz estimate (still valid with the additional linear term), we get for $t\in [k,k+1]$, uniformly in $k$, with $n$ so that $T_{n}\geq k$, 
			\begin{eqnarray}
			\left\Vert w_{n}\right\Vert _{L^{\frac{2p}{p-3}} ((k,t),L^{2p})} &\leq &\Vert  \mathcal{W}_{n}(k)  \Vert _{\mathcal{H}}+C\alpha_{n} 	\left\Vert w_{n}\right\Vert _{L^{\frac{2p}{p-3}} ((k,t),L^{2p})}^{2}+C \alpha_{n}^{p-1} \left\Vert w_{n}\right\Vert _{L^{\frac{2p}{p-3}} ((k,k+1),L^{2p})}^{p}\nonumber\\
			&\leq&C+C\alpha_{n} 	\left\Vert w_{n}\right\Vert _{L^{\frac{2p}{p-3}} ((k,t),L^{2p})}^{2}+C \alpha_{n}^{p-1} \left\Vert w_{n}\right\Vert _{L^{\frac{2p}{p-3}} ((k,t),L^{2p})}^{p}	,
			\end{eqnarray}		
			where  \eqref{bornesupW} is used and $\left\Vert w_{n}\right\Vert _{L^{q} ((k,t),L^{r})}$ is conventionally the sum of the norms of $w_{n,1}$ and $w_{n,2}$. A bootstrap argument states that for $\alpha_{n}$ that is small enough, \\$	\left\Vert w_{n}\right\Vert _{L^{\frac{2p}{p-3}} ((k,k+1),L^{2p})}\leq C$, with $n$ large enough and uniformly in $k\leq T_{n}-1$.
			Applying again Strichartz estimates (estimate \eqref{strichartz}), we obtain
			\begin{eqnarray}
			\Vert w_{n,1} \Vert _{L^q ((k,k+1),L^r)} + \Vert w_{n,2} \Vert _{L^q ((k,k+1),L^r)} \leq C , \nonumber 
			\end{eqnarray}
			for any admissible couple $(q,r)$. Thus, we have 
			$$ \frac{1}{\alpha_{n}} \left( 	\Vert u_{n} \Vert _{L^q ((k,k+1),L^r)} + \Vert v_{n} \Vert _{L^q ((k,k+1),L^r)} \right) \leq C . $$ 
			Consequently, $ \Vert u_{n} \Vert _{L^q ((k,k+1),L^r)} + \Vert v_{n} \Vert _{L^q ((k,k+1),L^r)} \leq C \alpha_{n} $ uniformly for $n,k \in \mathbb{N}$.  
			Getting back to \eqref{estimRi}, and using our bound, we have now for $n$ that is large enough and $k\leq T_{n}$
			\begin{eqnarray}
			\left\Vert \dfrac{1}{\alpha_{n}}R_1(\alpha_{n} w_{n,1}) \right\Vert _{L^1 ((k,k+1),L^2)} & \leq & C\alpha_{n}+C\alpha_{n} ^{p-1}\leq 2C\alpha_{n}.\nonumber %\\
			\end{eqnarray}
			and the same holds for $w_{n,2}$ and $R_{2}$.
		\end{enumerate} 
		Furthermore, using the Duhamel formula, we get for all $t>0$ 
		$$ \mathcal{W}_n (t) =\tilde{ \mathcal{S}}(t)\mathcal{W}_n(0) + \int_{0}^{t} \mathcal{S}(t-s) G(\mathcal{W}_n)(s)ds, $$
		where $ G(\mathcal{W}_n) = \left( 0,0, -\dfrac{1}{\alpha_{n} } R_1 (\alpha_{n} w_{n,1}) ,- \dfrac{1}{\alpha_{n} } R_2 (\alpha_{n} w_{n,2}) \right) $ and $(\tilde{ \mathcal{S}}(t))_{t\geq 0}$ is the semigroup generated by the operator $\tilde{ \mathcal{A}}$ given by 
		
		$$  \tilde{ \mathcal{A}}=
		\begin{pmatrix}
		0 & 0 & I & 0 \\
		0 & 0 & 0 & I \\
		\Delta - f_1(0) & 0 &  -a(x) & -b(x) \\
		0& \Delta  - f_2(0) & b(x)  & 0
		\end{pmatrix}.
		$$

		We can argue as in Proposition \ref{L2} and write 
		\begin{eqnarray}
		\mathcal{W}_n (T_n) &= &\tilde{ \mathcal{S}} (T_n)W_n(0) + \int_{0}^{T_n} \tilde{ \mathcal{S}}(T_n-s) G(\mathcal{W}_n)(s)ds ,\nonumber \\
		%&= &\mathcal{S}(T_n)\mathcal{W}_n(0) + \int_{0}^{\lfloor T_{n} \rfloor} \mathcal{S}(T_n-s) G(\mathcal{W}_n)(s)ds + \int_{\lfloor T_{n} \rfloor}^{ T_{n} } \mathcal{S}(T_n-s) G(\mathcal{W}_n)(s)ds , \nonumber \\
		%&= &\mathcal{S}(T_n)\mathcal{W}_n(0) +  \sum_{k=0}^{\lfloor T_{n} \rfloor -1}\int_{k}^{k+1 }  \mathcal{S}(T_n-s) G(\mathcal{W}_n)(s)ds + \int_{\lfloor T_{n} \rfloor}^{ T_{n} } \mathcal{S}(T_n-s) G(\mathcal{W}_n)(s)ds , \nonumber \\
		%&= &\mathcal{S}(T_n)\mathcal{W}_n(0) +  \sum_{k=0}^{\lfloor T_{n} \rfloor -1}\int_{0}^{1 }  \mathcal{S}(T_n-y-k) G(\mathcal{W}_n)(y+k)dy , \nonumber \\
		%&&+ \int_{0}^{ T_{n}-\lfloor T_{n} \rfloor } \mathcal{S}(T_n-y-\lfloor T_{n} \rfloor) G(\mathcal{W}_n)(y+\lfloor T_{n} \rfloor)ds , \nonumber \\
		&= &\tilde{ \mathcal{S}}(T_n)\mathcal{W}_n(0) +  \sum_{k=0}^{\lfloor T_{n} \rfloor -1} \tilde{ \mathcal{S}}(T_n-k) \int_{0}^{1 }  \tilde{ \mathcal{S}}(-y) G(\mathcal{W}_n)(y+k)dy , \nonumber \\
		&&+ \tilde{ \mathcal{S}}(T_n-\lfloor T_{n} \rfloor) \int_{0}^{ T_{n}-\lfloor T_{n} \rfloor } \tilde{ \mathcal{S}}(-y) G(\mathcal{W}_n)(y+\lfloor T_{n} \rfloor)ds. \nonumber 
		\end{eqnarray}
		Since $\Omega $ is a bounded domain, $f_i ^\prime (0) \geq 0 $, for $i=1,2$, and due to the exponential stability  of the semi-group $(\mathcal{S}(t))_{t\geq 0}$, the semi-group $(\tilde{ \mathcal{S}}(t))_{t\geq 0}$ is also exponentially stable. Furthermore, we get
		$$ \Vert \mathcal{W}_n (T_n) \Vert_{ \mathcal{H}} \leq C e^{-\beta t} + C \alpha_{n}.$$
		Finally, we obtain
		$$ \Vert \mathcal{W}_n (T_n) \Vert_{ \mathcal{H}} \longrightarrow 0 ,$$
		which is in contradiction with \eqref{321}.
		
		This concludes the proof of Theorem \ref{th3.6} and therefore of Theorem \ref{th1}.
	\end{proof}		
	\bigskip		
	
	%%%%%%%%%%%%%%%%%%%%%%%%%%%%%%%%%%%%%%%%%%%%%%%%%%%%%%%%%%%%%%%%%%%%%%%%%%%%%%%%%%%%%%%%%%%%%%%%%%%%%%%%%%%%%%%%%%%%%
	\textbf{Acknowledgments:} We are also very grateful to the anonymous reviewers for their careful reading of the article and remarks, which have allowed us to significantly improve the presentation of the paper. A significant part of the work on this paper was carried out while Radhia Ayechi visited the Camille Laurent at Sorbonne Universit\'e from January to june 2018. Radhia Ayechi gratefully acknowledges the financial support from \textbf{Erasmus +} during this visit. 
\bibliographystyle{plain}

\bibliography{semilinear}
\nocite{*}
\medskip
\medskip

\end{document}